%last modified by: Tamas&Viktor
\documentclass[a4paper]{amsart}

\usepackage{amsthm}
\usepackage{amsmath}
\usepackage{amssymb}
\usepackage{enumerate}

\theoremstyle{plain} \newtheorem{thm}{Theorem}[section]
\theoremstyle{plain} \newtheorem{prop}[thm]{Proposition}
\theoremstyle{plain} \newtheorem{lemma}[thm]{Lemma}
\theoremstyle{plain} \newtheorem{cor}[thm]{Corollary}
\theoremstyle{plain} 
\theoremstyle{plain} \newtheorem{question}[thm]{Question}
\theoremstyle{definition} \newtheorem{defin}[thm]{Definition}
\theoremstyle{definition} \newtheorem{notation}[thm]{Notation}
\theoremstyle{remark} \newtheorem{rem}[thm]{Remark}

\newcommand{\mathdef}{\stackrel{\textrm{\scriptsize def}}{=}}
\newcommand{\hdim}{\mathrm{dim}}
\newcommand{\mdim}{{\overline{\mathrm{dim}}_{\mathrm{M}}}}
\newcommand{\R}{\mathbb{R}}
\newcommand{\Q}{\mathbb{Q}}
\newcommand{\Rn}{\mathbb{R}^n}
\newcommand{\N}{\mathbb{N}}
\newcommand{\Hd}{\mathcal{H}}
\newcommand{\Hh}{\widehat{ \mathcal{H} }}
\newcommand{\cont}{\mathfrak{c}}
\newcommand{\al}{\alpha}
\newcommand{\be}{\beta}
\newcommand{\ga}{\gamma}
\newcommand{\de}{\delta}
\newcommand{\eps}{\varepsilon}
\newcommand{\su}{\subset}
\newcommand{\diam}{\mbox{diam}}
\newcommand{\Hc}{{\mathcal{H}}_\infty} %Hausdorff content
\newcommand{\Ch}{\widetilde{C}} %C hullam

\title{How large dimension guarantees a given angle?}
%\author{Viktor Harangi, Tam\'as Keleti, Gergely Kiss, P\'eter Maga, \\
%Andr\'as M\'ath\'e, Pertti Mattila, Bal\'azs Strenner}
%\author{V. Harangi, T. Keleti, G. Kiss, P. Maga, A. M\'ath\'e, P. Mattila
%and B. Strenner}

%\date{}

\author[Harangi]{Viktor Harangi}
\address{A.~R\'enyi Institute of Mathematics, 
Hungarian Academy of Sciences, 
P.O.B.~127, H-1364 Budapest, Hungary}
\email{bizkit@cs.elte.hu}

\author[Keleti]{Tam\'as Keleti}
\address{Institute of Mathematics,
E\"otv\"os Lor\'and University,
P\'azm\'any P\'eter s.~1/c, 1117
Budapest, Hungary}
\email{tamas.keleti@gmail.com}
\urladdr{http://www.cs.elte.hu/analysis/keleti}

\author[Kiss]{Gergely Kiss}
\address{Institute of Mathematics,
E\"otv\"os Lor\'and University,
P\'azm\'any P\'eter s.~1/c, 1117
Budapest, Hungary}
\email{kisss@cs.elte.hu}

\author[Maga]{P\'eter Maga}
\address{Department of Mathematics and its Applications, Central European University, 
N\'ador u.~9., H-1051 Budapest, Hungary}
\email{magapeter@gmail.com}

\author[M\'ath\'e]{Andr\'as M\'ath\'e}
\address{Mathematics Institute, University of Warwick,
Coventry, CV4~7AL, United Kingdom}
\email{A.Mathe@warwick.ac.uk}
%%\urladdr{http://www.warwick.ac.uk/~masibe}

\author[Mattila]{Pertti Mattila}
\address{Department of Mathematics and Statistics, 
University of Helsinki, 
P.O.B.~68, FI-00014 Helsinki, Finland}
\email{pemattil@cc.helsinki.fi}

\author[Strenner]{Bal\'azs Strenner}
\address{Institute of Mathematics,
E\"otv\"os Lor\'and University,
P\'azm\'any P\'eter s.~1/c, 1117
Budapest, Hungary}
\email{strenner@cs.elte.hu}

\subjclass[2010]{28A78}

\keywords{Hausdorff dimension, upper Minkowski dimension, contains, angle.}

\thanks{
The first, second and fifth authors were supported by
Hungarian Scientific Foundation grant no.~72655.
The second author was also supported by Bolyai J\'anos Scholarship.
The fifth author was also supported by EPSRC grant EP/G050678/1.
The sixth author would like to acknowledge the support of Academy of Finland.
}

\begin{document}

\begin{abstract}
We study the following two problems:

(1) Given $n\ge 2$ and $\al$, how large Hausdorff dimension can a compact
set $A\su\Rn$ have if $A$ does not contain three points that form an angle $\al$?

(2) Given $\al$ and $\de$, how large Hausdorff dimension can a %compact
subset $A$ of a Euclidean space have
if $A$ does not contain three points that form an angle
in the $\de$-neighborhood of $\al$?

%Some angles turn out to be very special: it is much harder (or
%impossible) to construct large dimensional sets for
%$\al\in\{0,60^\circ,90^\circ, 120^\circ, 180^\circ\}$ than for other $\al\in[0,180^\circ]$.
%The angles $0,60^\circ,90^\circ, 120^\circ$ and  $180^\circ$
%turn out to behave differently from all other angles:
%the maximal dimension is smaller for these special angles than for the other
%angles in problem (2) and appears to be different in problem (1).
An interesting phenomenon is that different angles show different behaviour in
the above problems. Apart from the clearly special extreme angles
$0$ and $180^\circ$, the angles $60^\circ,90^\circ$ and $120^\circ$ also
play special role in problem (2): the maximal dimension is smaller for these
special angles than for the other angles. 
%While 
In problem (1) the angle
$90^\circ$ seems to behave differently from other angles.
\end{abstract}

\maketitle

\section{Introduction}

The task of guaranteeing given patterns in a sufficiently large set has been a
central problem in different areas of mathematics for a long time.
Perhaps the most
famous example is the celebrated theorem of Szemer\'edi \cite{szemeredi},
which states that any sequence of positive integers with positive upper density
contains arbitrarily long arithmetic progressions.

More closely related to the present paper are the results of
Falconer \cite{falconer}, Keleti \cite{keleti} and Maga \cite{maga}, 
%the second (Keleti \cite{keleti})and the fourth (Maga \cite{maga}) author, 
which state that for any three points in $\R$ or in $\R^2$ 
there exists a set of full Hausdorff dimension 
that contains no similar copy to the three given points.
It is open whether the analogous result holds in higher dimension.
In case of a negative answer it would be natural to ask what Hausdorff
dimension guarantees a similar copy of three given points.
Since the similar copy of a triangle has the same angles as the original
one, the following question arose.

\begin{question}\label{question}
For given $n$ and $\al$, what is the smallest $d$ for which
any compact set $A\su \Rn$ with Hausdorff dimension larger than $d$
contains three points that form an angle $\al$?
\end{question}

We use the following terminology.

\begin{defin}
We say that the set $A \subset \R^n$ \emph{contains the angle} $\alpha \in [0,180^\circ]$
if there exist distinct points $x, y, z \in A$ such that the angle between the vectors $y-x$ and $z-x$ is $\alpha$.
\end{defin}

\begin{defin}\label{def:basicdef}
If $n\ge 2$ is an integer and $\alpha \in [0,180^\circ]$, then let
\begin{multline*}
C(n, \alpha)= \sup \{s : \exists A \subset \R^n
\textrm{ compact such that } \\ \hdim(A)=s
\textrm{ and } A \textrm{ does not contain the angle } \alpha \}.
\end{multline*}
\end{defin}

Clearly, answering Question~\ref{question} is the same as finding $C(n, \al)$.
Somewhat surprisingly our results highly depend on the given angle.
For $90^\circ$ we show (Theorem~\ref{n/2}) that $C(n,90^\circ)\le [(n+1)/2]$
(where $[a]$ denotes the integer part of $a$) while for other angles
we prove (Theorem~\ref{n-1}) only $C(n,\alpha)\le n-1$, which is sharp
for $\al=0$ and $\al=180^\circ$.

In the other direction, the fifth author (M\'ath\'e) constructed 
compact sets of Hausdorff dimension $n/8$ not containing $\alpha$; 
this construction is published separately in \cite{mathe}. 
He obtains a better result ($n/4$) in the special case 
when $\cos^2 \alpha$ is rational, 
and an even better one ($n/2$) when $\alpha = 90^\circ$. 
Table \ref{table1} shows the best known bounds for $C(n,\alpha)$.  

\begin{table}[h]
\caption{Best known bounds for $C(n, \alpha)$}
\label{table1}
\centering
\begin{tabular}{l | l | l}
$\alpha$ & lower bound & upper bound\\
[0.5ex]       % [1ex] adds vertical space
\hline%\hline
$0, 180^\circ$ & $n-1$ & $n-1$ \\
$90^\circ$ & $n/2$ \cite[Thm 3.1]{mathe} & $[(n+1)/2]$\\
$\cos^2 \alpha \in \Q$ & $n/4$ \cite[Thm 3.2]{mathe} & $n-1$\\
other angles & $n/8$ \cite[Thm 3.4]{mathe} & $n-1$ \\
[1ex]
\end{tabular}
\end{table}

In the present paper for any 
$\alpha\in (0,180^\circ)\setminus \{60^\circ, 90^\circ, 120^\circ\}$ 
we construct (Theorem~\ref{thm:selfsimilar})
a self-similar compact set with Hausdorff dimension
$c(\al)\log n$ that does not contain the angle $\al$.
Although this is a much weaker result in terms of the dimension of the set, 
it has an advantage over M\'ath\'e's construction, namely, 
the constructed sets avoid not only $\alpha$, 
but also a small neighborhood of $\alpha$.  
%However, we show (Theorem~\ref{rectangle}) that
%it is impossible to avoid $90^\circ$
%with the same type self-similar construction. %of dimension greater than $1$.

In light of the above construction it is natural to ask 
what can be said if we only want to guarantee an angle
near to a given angle. In Section~\ref{approx} we show that the previously
mentioned special angles $(0, 60^\circ, 90^\circ, 120^\circ, 180^\circ)$
are really very special. 
If we fix $\al$ and a sufficiently small $\de$ (but do not fix $n$)
then for all other angles 
the above-mentioned self-similar construction (Theorem~\ref{thm:selfsimilar})
gives a compact set with arbitrarily large Hausdorff dimension 
that does not contain any angle from the $\de$-neighborhood of $\al$, 
while for the special angles this is not the case. More precisely,
we show that any %compact 
set with Hausdorff %(or even upper Minkowski) 
dimension larger than $1$ contains angles arbitrarily close to the right angle 
(Theorem~\ref{Hausdorff1}), %(Theorems~\ref{Hausdorff1}~and~\ref{thm:90_mink}),
and that any %compact 
set with Hausdorff dimension larger than
$\frac{C}{\delta}\log(\frac{1}{\delta})$ (with an absolute constant $C$)
contains angles from the $\delta$-neighborhoods of $60^\circ$ and
$120^\circ$ (Corollary~\ref{cor:60} and Theorem~\ref{thm:120}).
For the angles $0$ and $180^\circ$ it was already known by Erd{\H o}s and F\"uredi
\cite{erdos/furedi} that any infinite set contains angles
arbitrarily close to $0$ and angles arbitrarily close to $180^\circ$.

%Table~\ref{table2} at the end of Section~\ref{approx} summarizes the
%above-mentioned results.
Note that in the previous results the dimension of
the Euclidean space ($n$) did not play any role.
To sum up the results we introduce the following function $\Ch$.
%depending on an angle $\alpha \in [0,180^\circ]$ and a small positive $\delta$.
\begin{defin}\label{def:ch}
If $\alpha \in [0,180^\circ]$ and $\de>0$, then let
\begin{multline*} 
\Ch(\alpha, \delta) = \sup \{\hdim(A) :
A \subset \R^n \textrm{ for }\emph{some } n; \\%A \textrm{ is compact};\\
A \mbox{ does not contain any angle from } (\alpha-\delta, \alpha+\delta) \} .
\end{multline*}
\end{defin}

Theorem~\ref{thm:selfsimilar} implies that $\Ch(\alpha, \delta) = \infty$ 
if $\alpha$ is different from the special angles 
$0$, $60^\circ$, $90^\circ$, $120^\circ$, $180^\circ$ and 
$\delta$ is smaller than the distance of $\alpha$ from the special angles. 
A construction of the first author (Harangi \cite{harangi}) shows that 
$\Ch(\alpha, \delta) \geq \frac{c}{\delta}/\log(\frac{1}{\delta})$ 
for the angles $\alpha = 60^\circ, 120^\circ$.  
We summarize the above results in Table~\ref{table2}.
\begin{table}[h]
\caption{Smallest dimensions that guarantee an angle in the $\de$-neighborhood
of $\al$}
\label{table2}
\centering
\begin{tabular}{l | l | l}
$\alpha$ & $\Ch(\alpha, \delta)$ & \\
[0.5ex]       % [1ex] adds vertical space
\hline%\hline
$0, 180^\circ$ & $= 0$ & \\
$90^\circ$ & $= 1$ & \\
$60^\circ, 120^\circ$ & $\approx 1/\delta$ & apart from a multiplicative error $C\cdot\log(1/\delta)$ \\
other angles & $= \infty$ & provided that $\delta$ is sufficiently small\\
[1ex]
\end{tabular}
\end{table}

We emphasize the difference between the tasks of
finding an angle precisely and finding it approximately.
For example, we can find angles arbitrarily close to $90^\circ$
given that the dimension of the set is greater than $1$,
while if we want to find $90^\circ$ precisely in the set,
we need to know that its dimension is greater than $n/2$.

The following question is also closely related:
How large does the Hausdorff dimension of
a compact subset of $\R^n$ need to be to ensure that the set of angles 
contained in the set has positive Lebesgue measure? 
In \cite{iosevich} it is proved that larger than $\frac{n+1}2$ is enough and in 
\cite{mathe} that $n/6$ is not enough.

\begin{notation}
\label{not:Hausdorff}
We denote the $s$-dimensional Hausdorff measure by $\Hd^s$.

By \emph{$\dim$} we denote the Hausdorff dimension.

Recall that compact sets having the property $0<\Hd^s(K)<\infty$ are
called \emph{compact $s$-sets}.
\end{notation}

Using the well-known fact that an
analytic set $A$ with positive $\Hd^s$ measure contains a compact $s$-set
(see e.g. \cite[2.10.47-48]{federer})
we get that in all of the above-mentioned results instead of compactness
it is enough to assume that the set is analytic (or Borel) and on the
other hand, we can always suppose that the given compact or analytic set
is a compact $s$-set. Thus $C(n,\al)$ can be also expressed as
\begin{multline*}
C(n, \alpha)= \sup \{s : \exists A \subset \R^n
\textrm{ analytic such that } \\ \hdim(A)=s
\textrm{ and } A \textrm{ does not contain the angle } \alpha \},
\end{multline*}
or
\begin{multline*}
C(n, \alpha) = \sup \{s : \exists K \subset \R^n
\textrm{ compact such that } \\ 0<\Hd^s(K)<\infty
\textrm{ and } K \textrm{ does not contain the angle } \alpha \}.
\end{multline*}

However, as we prove it in the Appendix (Theorem~\ref{thm:transf}), some assumption about the set is
necessary, otherwise the above function would be $n$ for any $\al$.
In fact, for any given $n$ and $\al$ we construct by transfinite induction
a set in $\Rn$ with full Lebesgue outer measure that
does not contain the angle $\al$.

Note that in the definition 
of $\Ch(\al,\de)$ (Definition \ref{def:ch}), when we want to
find an angle in an open interval $(\al-\de,\al+\de)$, we have no 
assumption about the set $A$. This is simply because the closure of $A$
contains an angle in $(\al-\de,\al+\de)$ if and only if $A$ does, 
so in these problems we can always assume that $A$ is closed. 
%j[ kit\"untetve domafffff        hhhiiiiiip[[]]]]]]]\{\{[\{[\{
%Since the boundedness of $A$ can be always assumed, 
%in the approximative problems we can always assume that $A$ is compact. 
Combining this with the above mentioned well-known fact that any analytic set
with positive $\Hd^s$ measure contains a compact $s$-set we get
\begin{multline} \label{eq:compactch}
\Ch(\alpha, \delta) = \sup \{s : \exists n\ \exists K \subset \R^n
\textrm{ compact such that } 0<\Hd^s(K)<\infty \\
\textrm{ and } K \textrm{ does not contain any angle from } 
(\alpha-\delta, \alpha+\delta) \} . 
\end{multline}

In fact, when we want to find an angle near to a given angle, then we get the
same results if we replace Hausdorff dimension by upper Minkowski dimension,
but this is not as clear as the above observation 
(see Corollary~\ref{cor:mink=haus}).

The following theorem,
which is the first statement of \cite[Theorem 10.11]{mattila},
plays essential role in some of our proofs.

\begin{notation}
The set of $k$-dimensional subspaces of $\Rn$ will be denoted by $G(n,k)$
and the natural probability measure on it by $\gamma_{n,k}$
(see e.g. \cite{mattila} for more details).
\end{notation}

\begin{thm}\label{thm:intersect2}
If $m<s<n$ and $A$ is an $\Hd^s$ measurable subset of $\R^n$ with $0<\Hd^s(A)<\infty$,
then
\[
\hdim\big(A\cap(W+x)\big)=s-m
\]
for $\Hd^s \times \gamma_{n,n-m}$ almost all $(x,W) \in A \times G(n,n-m)$.
\end{thm}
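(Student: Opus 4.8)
The plan is to reduce to a statement about Frostman measures, prove the two inequalities $\hdim(A\cap(W+x))\le s-m$ and $\hdim(A\cap(W+x))\ge s-m$ separately --- each for $\Hd^s\times\gamma_{n,n-m}$-almost every pair $(x,W)$ --- and intersect the two full-measure sets. Write $p_W=P_{W^\perp}\colon\Rn\to W^\perp$ for the orthogonal projection onto the $m$-dimensional space $W^\perp$; since $A\cap(W+x)$ depends on $x$ only through $v=p_W(x)$, the base of interest is really the family of affine planes $W+v$. Set $\mu=\Hd^s|_A$. As $0<\Hd^s(A)<\infty$, $\Hd^s$-almost every point of $A$ has bounded upper $s$-density, so up to an $\Hd^s$-null set $A=\bigcup_k E_k$ where $\mu(B(x,r))\le C_s r^s$ for all $x\in E_k$ and $r<1/k$; hence $\mu_k:=\mu|_{E_k}$ has finite $t$-energy $I_t(\mu_k)=\iint|x-y|^{-t}\,d\mu_k x\,d\mu_k y<\infty$ for every $t<s$. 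It thus suffices to prove, for each $k$ and each fixed $t\in(m,s)$, that $\hdim(A\cap(W+x))\le s-m$ and $\hdim(A\cap(W+x))\ge t-m$ hold for $\mu_k\times\gamma_{n,n-m}$-almost every $(x,W)$: a union over $k$ and an intersection over a sequence $t_j\uparrow s$ then give the theorem. Finally, since $I_t(\mu_k)<\infty$ with $t>m$, the projection theorem for measures (see \cite[Ch.~9]{mattila}) gives $(p_W)_\#\mu_k\ll\mathcal L^m$ on $W^\perp$ for $\gamma_{n,n-m}$-almost every $W$; consequently any statement valid for $\mathcal L^m$-almost every $v\in W^\perp$ (for a.e.\ $W$) passes to $\mu_k$-almost every $x$.

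For the upper bound, fix $W$ and apply the Eilenberg (co-area) inequality for Hausdorff measures to the $1$-Lipschitz map $p_W$: for every $\tau>0$,
\[
\int^{*}_{W^\perp}\Hd^{\tau}\big(A\cap(W+v)\big)\,d\mathcal L^m(v)\ \le\ c(n,m,\tau)\,\Hd^{m+\tau}(A).
\]
With $\tau=s-m+\eps$ the right-hand side is a constant times $\Hd^{s+\eps}(A)=0$, so $\hdim(A\cap(W+v))\le s-m+\eps$ for $\mathcal L^m$-almost every $v\in W^\perp$, and this holds for \emph{every} $W$. Letting $\eps$ run through a null sequence and applying the projection remark of the first paragraph gives $\hdim(A\cap(W+x))\le s-m$ for $\mu_k\times\gamma_{n,n-m}$-almost every $(x,W)$.

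For the lower bound, fix $t\in(m,s)$ and put $\nu=\mu_k$, so $I_t(\nu)<\infty$. For $\gamma_{n,n-m}$-almost every $W$ disintegrate $\nu$ along the fibres of $p_W$ as $\nu=\int_{W^\perp}\nu_{W,v}\,d\mathcal L^m(v)$, where each \emph{sliced measure} $\nu_{W,v}$ is a measure carried by $A\cap(W+v)$, nonzero for $\mathcal L^m$-almost every $v$ in the support of $(p_W)_\#\nu$. The heart of the matter is the sliced-energy estimate
\[
\int_{G(n,n-m)}\int_{W^\perp}I_{t-m}(\nu_{W,v})\,d\mathcal L^m(v)\,d\gamma_{n,n-m}(W)\ \le\ c(n,m,t)\,I_t(\nu).
\]
Granting it, the right side is finite, hence for $\gamma_{n,n-m}$-almost every $W$ the inner integral is finite, so $I_{t-m}(\nu_{W,v})<\infty$ for $\mathcal L^m$-almost every $v$; by the energy-dimension principle $\hdim(\mathrm{spt}\,\nu_{W,v})\ge t-m$, and since $\mathrm{spt}\,\nu_{W,v}\su A\cap(W+v)$ we conclude $\hdim(A\cap(W+v))\ge t-m$ for $\mathcal L^m$-a.e.\ $v$ and a.e.\ $W$. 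The projection remark upgrades this to $\mu_k\times\gamma_{n,n-m}$-almost every $(x,W)$, and together with the upper bound and the reductions of the first paragraph the theorem follows.

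The main work --- and the step I expect to be the real obstacle --- is the sliced-energy estimate, together with the existence and joint measurability of the sliced measures $\nu_{W,v}$, which is exactly where the absolute continuity $(p_W)_\#\nu\ll\mathcal L^m$ for a.e.\ $W$ is used. I would prove the estimate on the Fourier side. Writing $V=W^\perp$, the identity $\nu=\int_{W^\perp}\nu_{W,v}\,d\mathcal L^m v$ shows that for $\eta\in W$ the function $\zeta\mapsto\widehat\nu(\eta+\zeta)$ on $V$ is the Fourier transform of $v\mapsto\widehat{\nu_{W,v}}(\eta)$; Plancherel on $V$ then yields the slicing identity $\int_V|\widehat\nu(\eta+\zeta)|^2\,d\mathcal L^m\zeta=\int_{W^\perp}|\widehat{\nu_{W,v}}(\eta)|^2\,d\mathcal L^m v$. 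Multiplying by the kernel $|\eta|^{(t-m)-(n-m)}=|\eta|^{t-n}$, integrating over $\eta\in W$ (which reconstitutes $I_{t-m}(\nu_{W,v})$ up to a constant) and then over $W\in G(n,n-m)$, recombining $\xi=\eta+\zeta$, and using that the Grassmannian integral $\int_{G(n,n-m)}|P_W\xi|^{t-n}\,d\gamma_{n,n-m}(W)$ is a constant multiple of $|\xi|^{t-n}$ --- finite precisely because $t>m$ --- reduces the left-hand side to a constant times $\int_{\Rn}|\widehat\nu(\xi)|^2|\xi|^{t-n}\,d\xi$, i.e.\ a constant times $I_t(\nu)$. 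The genuine subtlety is that the naive disintegration of $I_t(\nu)$ concentrates mass on the $\nu\times\nu$-null diagonal $\{(x,y):p_W(x)=p_W(y)\}$, so finiteness of the sliced energies is recovered only after the averaging over $W$; to make the Fourier manipulations legitimate one first performs the computation with $\nu$ mollified and then passes to the limit by monotone convergence.
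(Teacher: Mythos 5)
This is one place where there is nothing in the paper to compare against: Theorem~\ref{thm:intersect2} is not proved in the paper at all --- it is imported verbatim as the first statement of Mattila's Theorem~10.11 in \cite{mattila}. So your proposal should be judged as a free-standing reconstruction of the Marstrand--Mattila slicing theorem, and as such it is essentially correct and follows the standard route: the reduction to measures $\mu_k$ with a local Frostman bound and hence finite $t$-energy for $t<s$ is right; the upper bound via the Eilenberg coarea inequality applied to $p_W$ with $\tau=s-m+\eps$ (using $\Hd^{s+\eps}(A)=0$) is exactly how the sharp inequality $\le s-m$ is obtained for every $W$ and $\mathcal L^m$-a.e.\ fibre; and the lower bound via sliced measures, the sliced-energy inequality, and the Grassmannian average $\int_{G(n,n-m)}|P_W\xi|^{t-n}\,d\gamma_{n,n-m}(W)\le c|\xi|^{t-n}$ (finite precisely because $t>m$) is the Fourier-side version of Mattila's argument. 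Your Fourier bookkeeping checks out: for $\eta\in W$, $\zeta\in W^\perp$ one indeed has $\widehat\nu(\eta+\zeta)=\int_{W^\perp}e^{-2\pi i\zeta\cdot v}\,\widehat{\nu_{W,v}}(\eta)\,d\mathcal L^m(v)$, so Plancherel on $W^\perp$ gives the slicing identity, and the exponent $t-n$ is the correct one for the $(t-m)$-energy of a measure on an $(n-m)$-plane. The points you defer are the expected ones and are all standard: existence and joint measurability of the sliced measures, the inequality $\iint\nu_{W,v}(B)\,d\mathcal L^m(v)\,d\gamma_{n,n-m}(W)\le c\,\nu(B)$ (needed both to see that $\nu_{W,v}$ is carried by $E_k\cap(W+v)$, not merely by $\mathrm{spt}\,\nu$, and to see that the slice through $\nu$-a.e.\ point is nonzero), the legitimacy of Plancherel via mollification, and the measurability of $(x,W)\mapsto\hdim\bigl(A\cap(W+x)\bigr)$ needed for the product-a.e.\ statement to be meaningful. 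If you were to write this up in full you would in effect be reproving \cite[Theorem~10.11]{mattila}; for the purposes of this paper the citation is the intended proof.
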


In two dimensions it says that for $\Hd^s$ almost all $x\in A$, almost all lines through $x$
intersect $A$ in a set of dimension $s-1$. One would expect that this theorem also holds for half-lines instead
of lines. Indeed, Marstrand proved it in \cite[Lemma 17]{marstrand}. Although the lemma only says that it holds for lines, he actually proves it for half-lines. Therefore the following theorem is also true.

\begin{thm}\label{thm:intersect_halflines}
Let $1<s<2$ and let $A\subset \R^2$ be $\Hd^s$ measurable with
$0<\Hd^s(A)<\infty$. For any $x\in \R^2$ and $\vartheta\in [0,360^\circ)$ let
$L_{x,\vartheta}$ denote the half-line from $x$ at angle $\vartheta$. Then
\[
\hdim\big(A\cap L_{x,\vartheta}\big)=s-1
\]
for $\Hd^s \times \lambda$ almost all
$(x,\vartheta) \in A \times [0,360^\circ)$.
\end{thm}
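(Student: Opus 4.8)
The plan is to deduce Theorem~\ref{thm:intersect_halflines} from the known version for full lines, namely Theorem~\ref{thm:intersect2} with $n=2$, $m=1$, together with Marstrand's stronger statement from \cite[Lemma 17]{marstrand}. First I would recall the precise content of Marstrand's lemma: although it is phrased in terms of lines through a point, the argument there in fact controls the dimension of the intersection of $A$ with each of the two half-lines into which the point divides the line. So the essential work is really just a matter of unpacking the reference and recording the conclusion in the notation $L_{x,\vartheta}$, $\vartheta\in[0,360^\circ)$, that we will use later.

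Concretely, I would argue as follows. Fix $s\in(1,2)$ and $A\subset\R^2$ with $0<\Hd^s(A)<\infty$. A line through $x$ at (undirected) angle $\vartheta\in[0,180^\circ)$ is the union of the two half-lines $L_{x,\vartheta}$ and $L_{x,\vartheta+180^\circ}$, so
\[
\hdim\big(A\cap(\text{line through }x\text{ at angle }\vartheta)\big)
=\max\{\hdim(A\cap L_{x,\vartheta}),\hdim(A\cap L_{x,\vartheta+180^\circ})\}.
\]
The upper bound $\hdim(A\cap L_{x,\vartheta})\le s-1$ for $\Hd^s\times\lambda$ almost every $(x,\vartheta)$ is immediate from the line version of Theorem~\ref{thm:intersect2}, since a half-line is contained in a line and the map $\vartheta\mapsto\vartheta\bmod 180^\circ$ pushes forward Lebesgue measure on $[0,360^\circ)$ to a multiple of Lebesgue measure on $[0,180^\circ)$. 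For the lower bound I would invoke the fact that Marstrand's proof of \cite[Lemma 17]{marstrand} actually yields $\hdim(A\cap L_{x,\vartheta})\ge s-1$ for $\Hd^s\times\lambda$ almost every $(x,\vartheta)\in A\times[0,360^\circ)$; combining the two inequalities gives the claimed equality on a set of full $\Hd^s\times\lambda$ measure.

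Alternatively, if one prefers not to reread Marstrand's proof, the lower bound can be bootstrapped from the line version by a Fubini-type symmetry argument: by Theorem~\ref{thm:intersect2} the line $\ell_{x,\vartheta}$ through $x$ satisfies $\hdim(A\cap\ell_{x,\vartheta})=s-1$ for a.e.\ $(x,\vartheta)$, and for such $(x,\vartheta)$ the set $A\cap\ell_{x,\vartheta}$ has dimension $s-1$; one then checks that for $\Hd^s$-a.e.\ choice of a second point $x'\in A\cap\ell_{x,\vartheta}$, the point $x'$ lies strictly inside $A\cap\ell_{x,\vartheta}$ relative to the two rays, so that at least one of the two half-lines from $x'$ inside that line still sees a set of dimension $s-1$, and a further averaging removes the asymmetry between the two rays. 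I expect the main obstacle — such as it is — to be purely expository: making sure the measure-theoretic bookkeeping (the reduction from $[0,360^\circ)$ to $[0,180^\circ)$, the interplay of the two ``almost all'' quantifiers, and the exact scope of Marstrand's argument) is stated cleanly, rather than any genuinely new difficulty, since the half-line refinement is already implicit in \cite{marstrand}.
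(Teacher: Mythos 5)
Your primary route is exactly the paper's: the paper gives no independent proof of Theorem~\ref{thm:intersect_halflines}, but simply observes that Marstrand's \cite[Lemma 17]{marstrand}, though stated for lines, is in fact proved for half-lines --- which is your first argument, with the upper bound $\hdim(A\cap L_{x,\vartheta})\le s-1$ being the easy half via Theorem~\ref{thm:intersect2} and the pushforward from $[0,360^\circ)$ to $[0,180^\circ)$. One caution: your ``alternative'' bootstrap from the line version does not work as written. The set $A\cap\ell_{x,\vartheta}$ has dimension $s-1<s$, so ``for $\Hd^s$-a.e.\ $x'\in A\cap\ell_{x,\vartheta}$'' is vacuous (the natural measure there would be some $(s-1)$-dimensional one, about which Theorem~\ref{thm:intersect2} says nothing), and knowing that at least one of the two rays from $x'$ carries dimension $s-1$ does not identify which one; the step ``a further averaging removes the asymmetry'' is precisely the nontrivial content of the half-line refinement and cannot be extracted formally from the line statement --- one genuinely has to re-enter Marstrand's density/sector argument. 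Since that alternative is offered only as a fallback, your proof stands on the citation, exactly as the paper's does.
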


\section{Finding a given angle}
\label{given}

In this section we give estimates to $C(n,\al)$. For $n=2$ we
get the following exact result.

\begin{thm}\label{thm:bigsets_in_the_plane}
For any $\alpha \in [0,180^\circ]$ we have $C(2,\alpha)=1$.
\end{thm}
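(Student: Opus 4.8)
The plan is to prove the two inequalities $C(2,\alpha)\le 1$ and $C(2,\alpha)\ge 1$ separately, with the upper bound being the substantive half. For the lower bound, one observes that for every $\alpha\in[0,180^\circ]$ there is a compact set $A\subset\R^2$ with $\hdim A=1$ not containing the angle $\alpha$; the extreme cases $\alpha=0$ and $\alpha=180^\circ$ are handled by any compact set that is both totally disconnected and contains no three collinear points (e.g.\ a suitable Cantor-type set of dimension $1$), and for $\alpha\in(0,180^\circ)$ one can even invoke Theorem~\ref{thm:selfsimilar} or simply note that a line segment contains no angle strictly between $0$ and $180^\circ$ other than those, so in fact a segment already gives dimension $1$ avoiding every $\alpha\in(0,180^\circ)$. (If one wants a single uniform example, a line segment works for all $\alpha\notin\{0,180^\circ\}$, and a dimension-$1$ "dust" works for $\alpha\in\{0,180^\circ\}$.) Thus $C(2,\alpha)\ge 1$ in every case.

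For the upper bound we must show that any compact $A\subset\R^2$ with $\hdim A>1$ contains the angle $\alpha$. By the reductions recalled in the introduction we may assume $A$ is a compact $s$-set with $1<s<2$ and $0<\Hd^s(A)<\infty$. The key tool is Theorem~\ref{thm:intersect_halflines}: for $\Hd^s\times\lambda$-almost every $(x,\vartheta)\in A\times[0,360^\circ)$ the intersection $A\cap L_{x,\vartheta}$ has Hausdorff dimension $s-1>0$, and in particular is infinite (indeed uncountable). Pick such a point $x\in A$ for which the set of good directions $\vartheta$ has full measure in $[0,360^\circ)$. Now choose any direction $\vartheta_0$ that is good, so $A\cap L_{x,\vartheta_0}$ contains a point $y\ne x$ (it is infinite), and then consider the direction $\vartheta_1=\vartheta_0+\alpha$ (or $\vartheta_0-\alpha$); since the good directions have full measure we may additionally arrange that $\vartheta_1$ is good too — more carefully, the set of $\vartheta$ for which \emph{both} $\vartheta$ and $\vartheta+\alpha$ are good still has full measure, so such a $\vartheta_0$ exists. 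Picking $z\ne x$ in $A\cap L_{x,\vartheta_1}$, the points $x,y,z$ are distinct and the angle at $x$ between $y-x$ and $z-x$ is exactly $\alpha$. This shows $A$ contains the angle $\alpha$, hence $C(2,\alpha)\le 1$.

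One subtlety to address is the case $\alpha=0$: here we need $y$ and $z$ on the \emph{same} half-line $L_{x,\vartheta_0}$ but distinct, which is fine since that intersection is infinite; and for $\alpha=180^\circ$ we need $y$ on $L_{x,\vartheta_0}$ and $z$ on the opposite half-line $L_{x,\vartheta_0+180^\circ}$, again both good for a full-measure set of $\vartheta_0$. A second point worth a sentence: Theorem~\ref{thm:intersect_halflines} is stated for $1<s<2$, which is exactly the range we reduced to, and the conclusion "dimension $s-1$" gives nonempty (in fact infinite) intersections, which is all we use. The main obstacle — and it is a mild one — is the Fubini-type bookkeeping needed to guarantee a single point $x$ and a single direction $\vartheta_0$ that are simultaneously good for the two (or, for $\alpha=0$, one) relevant half-lines; this follows by intersecting two full-measure sets of directions. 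Everything else is routine once Theorem~\ref{thm:intersect_halflines} is in hand.
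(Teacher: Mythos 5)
Your proof is correct and follows essentially the same route as the paper: the upper bound via Theorem~\ref{thm:intersect_halflines} (pick a good point $x$, then a direction $\vartheta_0$ with both $\vartheta_0$ and $\vartheta_0+\alpha$ good) is exactly the paper's argument, including the treatment of $\alpha=0$ and $180^\circ$. For the lower bound the paper simply uses a line segment for $\alpha\in(0,180^\circ)$ and a circle for $\alpha\in\{0,180^\circ\}$, which gives a concrete witness in place of your unspecified ``Cantor-type'' set.
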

\begin{proof}
A line has dimension $1$ and it contains only the angles $0$ and $180^\circ$. A circle also has dimension $1$, but does not contain the angles $0$ and $180^\circ$. Therefore $C(2,\alpha)\ge 1$ for all $\alpha \in [0,180^\circ]$.

For the other direction let $\alpha \in [0,180^\circ]$ and $s>1$ fixed.
We have to
prove that any compact $s$-set contains the angle $\alpha$. By Theorem \ref{thm:intersect_halflines}, there exists $x\in K$ such that $\hdim(K\cap L_{x,\vartheta})=s-1$ for almost all $\vartheta\in [0,360^\circ)$, where
$L_{x,\vartheta}$ denotes the half-line from $x$ at angle $\vartheta$.
Hence we can take $\vartheta_1, \vartheta_2\in [0,360^\circ)$ such that $|\vartheta_1-\vartheta_2|=\alpha$, and $\hdim(K\cap L_{x,\vartheta_i})=s-1$ for $i=1,2$. If $x_i\in L_{x,\vartheta_i}\setminus \{x\}$ then the angle between the vectors $x_1-x$ and $x_2-x$ is $\alpha$, so indeed, $K$ contains the angle $\alpha$.
\end{proof}

An analogous theorem holds for higher dimensions.

%For $n\ge3$ it easily follows from the
%case $n=2$, however, a much simpler proof can be given for it if we omit the
%case $n=2$. This is due to the fact that if $n\ge3$ then for any $\alpha \in (0,180^\circ)$ one can
%give a $v_0,\ldots, v_{2k}, v_{2k+1}=v_0$ sequence of non-zero vectors in
%$\R^n$ such that the angle between $v_i$ and $v_{i+1}$ is $\alpha$ for
%$i=0,\ldots,2k$. However, one cannot do this in the plane for all the angles.

\begin{thm}\label{n-1}
If $n\ge2$ and $\alpha \in [0,180^\circ]$ then $C(n,\alpha)\le n-1$.
\end{thm}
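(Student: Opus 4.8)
The plan is to reduce the $n$-dimensional statement to the planar case already established in Theorem~\ref{thm:bigsets_in_the_plane}, using the slicing result Theorem~\ref{thm:intersect2} to pass from a compact $s$-set in $\R^n$ with $s>n-1$ to a compact set of dimension $>1$ sitting inside a $2$-plane. Concretely, fix $\alpha\in[0,180^\circ]$ and $s>n-1$; we must show every compact $s$-set $K\su\R^n$ contains the angle $\alpha$. Apply Theorem~\ref{thm:intersect2} with $m=n-2$ (so that $n-m=2$ and $m<s<n$): for $\Hd^s\times\gamma_{n,2}$ almost every pair $(x,W)\in K\times G(n,2)$ we have $\hdim\big(K\cap(W+x)\big)=s-(n-2)=s-n+2>1$. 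In particular there is at least one affine $2$-plane $V=W+x$ with $\hdim(K\cap V)>1$.

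Having produced such a $V$, the set $K\cap V$ is a closed (indeed compact) subset of $V$ with Hausdorff dimension strictly greater than $1$; identifying $V$ isometrically with $\R^2$, the computation in the proof of Theorem~\ref{thm:bigsets_in_the_plane} applies verbatim. (If one prefers to invoke the statement of that theorem as a black box rather than its proof, one first passes to a compact $s'$-subset of $K\cap V$ for some $s'\in(1,\hdim(K\cap V))$ using the standard fact, already quoted in the paper, that an analytic set of positive $\Hd^{s'}$ measure contains a compact $s'$-set; then $C(2,\alpha)=1$ forces this subset — hence $K$ itself — to contain the angle $\alpha$.) Either way we conclude $C(n,\alpha)\le n-1$.

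The only point requiring a little care is checking the hypotheses of Theorem~\ref{thm:intersect2}: it is stated for $m<s<n$, so the argument as written needs $n-2<s<n$, i.e. it directly handles $n-1<s<n$. Since $C(n,\alpha)$ is defined as a supremum over $s$, establishing that every compact $s$-set with $s\in(n-1,n)$ contains the angle $\alpha$ already yields $C(n,\alpha)\le n-1$; there is no need to treat $s\ge n$ separately, and the degenerate case $s=n$ would in any event follow since a set of positive Lebesgue measure certainly contains every angle. I do not anticipate a genuine obstacle here — the main conceptual step is simply recognizing that $n-1$ is exactly the threshold making the codimension-$(n-2)$ slices one-dimensional, so that the planar mechanism (a half-line through a typical point meets the set in a positive-dimensional set in almost every direction, and two directions at angular separation $\alpha$ can then be chosen) can be transplanted. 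The sharpness claim for $\alpha\in\{0,180^\circ\}$ is not part of this theorem's statement and is handled elsewhere (a hyperplane has dimension $n-1$ and contains only the angles $0$ and $180^\circ$).
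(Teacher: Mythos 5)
Your proposal is correct and follows essentially the same route as the paper's own proof: apply Theorem~\ref{thm:intersect2} with $m=n-2$ to find an affine $2$-plane meeting $K$ in a set of dimension $s-n+2>1$, then invoke Theorem~\ref{thm:bigsets_in_the_plane}. Your additional remarks --- passing to a compact $s'$-subset so the planar theorem applies as stated, and noting that the hypothesis $m<s<n$ is harmless because $C(n,\alpha)$ is a supremum --- are correct points of care that the paper leaves implicit.
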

\begin{proof}
We have already seen the case $n=2$, so we may assume that $n\ge 3$.
It is enough to show that if $s>n-1$ and $K$ is a compact $s$-set, then $K$
contains the angle $\alpha$.
By Theorem \ref{thm:intersect2}, there exists $x \in K$ such that there exists a
$W \in G(n,2)$ with $\hdim(B)=s-n+2>1$ for $B\mathdef A\cap (W+a)$. The set $B$ lies in a two-dimensional plane, so we can think
about $B$ as a subset of $\R^2$. Applying Theorem
\ref{thm:bigsets_in_the_plane} completes the proof.
\end{proof}

Now we are able to give the exact value of $C(n,0)$ and $C(n,180^\circ)$.

\begin{thm}
$C(n,0)=C(n,180^\circ)=n-1$ for all $n\ge 2$.
\end{thm}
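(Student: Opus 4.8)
The plan is to establish the two inequalities $C(n,0)=C(n,180^\circ)=n-1$ separately, using Theorem~\ref{n-1} for the upper bound and an explicit construction for the lower bound. The upper bound $C(n,0)\le n-1$ and $C(n,180^\circ)\le n-1$ are immediate from Theorem~\ref{n-1}, which applies to every $\alpha\in[0,180^\circ]$, so no work is needed there. The content is entirely in producing, for every $\varepsilon>0$, a compact set $A\su\Rn$ with $\hdim(A)>n-1-\varepsilon$ (indeed with $\hdim(A)=n-1$ attainable) that contains neither the angle $0$ nor the angle $180^\circ$.

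First I would observe that a set avoids both $0$ and $180^\circ$ precisely when it contains no three distinct collinear points; in other words, every line meets $A$ in at most two points. So the task is to build a compact set of Hausdorff dimension $n-1$ in $\Rn$ that is ``$2$-point'' with respect to lines. The natural candidate is (a piece of) a strictly convex hypersurface, e.g. the unit sphere $S^{n-1}\su\Rn$, or a spherical cap: a line intersects a sphere in at most two points, so $S^{n-1}$ contains no three collinear points, hence avoids $0$ and $180^\circ$, and $\hdim(S^{n-1})=n-1$. If one wants to avoid mentioning that the sphere is already compact and of the right dimension, one can instead take the graph of a strictly convex smooth function on a closed ball in $\R^{n-1}$; such a graph is compact, has Hausdorff dimension $n-1$ (being a bi-Lipschitz image of a subset of $\R^{n-1}$), and any line meets it in at most two points by strict convexity. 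Either way we get $C(n,0)\ge n-1$ and $C(n,180^\circ)\ge n-1$.

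Combining the upper bound from Theorem~\ref{n-1} with this lower bound gives $C(n,0)=C(n,180^\circ)=n-1$, completing the proof. The only point that requires a word of care — and the one I would flag as the ``main obstacle'', though it is really just a sanity check — is making sure the chosen set genuinely realizes dimension $n-1$ while being compact and line-$2$-point; the sphere does all of this transparently, so in fact there is no serious obstacle here, and the proof is short.

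\begin{proof}
By Theorem~\ref{n-1} we have $C(n,0)\le n-1$ and $C(n,180^\circ)\le n-1$.
For the reverse inequality, note that a set contains the angle $0$ or the angle $180^\circ$ if and only if it contains three distinct collinear points. Now take $A=S^{n-1}$, the unit sphere in $\Rn$. It is compact with $\hdim(S^{n-1})=n-1$, and any line intersects $S^{n-1}$ in at most two points, so $S^{n-1}$ contains no three distinct collinear points and hence does not contain the angle $0$ nor the angle $180^\circ$. Therefore $C(n,0)\ge n-1$ and $C(n,180^\circ)\ge n-1$, which finishes the proof.
\end{proof}
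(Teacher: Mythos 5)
Your proof is correct and matches the paper's argument exactly: the upper bound is Theorem~\ref{n-1}, and the lower bound is witnessed by the $(n-1)$-dimensional sphere, which contains no three collinear points. Nothing to add.
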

\begin{proof}
One of the inequalities was proven in the previous theorem, while the other one is shown by the $(n-1)$-dimensional sphere.
\end{proof}

We prove a better upper bound for $C(n,90^\circ)$.

\begin{thm}
\label{n/2}
If $n$ is even then $C(n,90^\circ)\le n/2$. If $n$ is odd then $C(n, 90^\circ)\le (n+1)/2$.
\end{thm}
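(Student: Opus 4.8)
The plan is to mimic the structure of the proof of Theorem~\ref{n-1}, but to slice $K$ with an affine subspace of the \emph{right codimension} so that the resulting lower-dimensional piece still has dimension exceeding the threshold at which a $90^\circ$ angle is forced in that dimension. Concretely, suppose $s>n/2$ (for $n$ even) or $s>(n+1)/2$ (for $n$ odd), and let $K\su\Rn$ be a compact $s$-set not containing the angle $90^\circ$; we derive a contradiction. Write $k=\lceil n/2\rceil$, so that $k\le s$ fails to be guaranteed only in the borderline, and more precisely $2k\ge n$ while $s>n-k$. Applying Theorem~\ref{thm:intersect2} with $m=n-k$ (legitimate since $m=n-k<s<n$ in the relevant range), we obtain a point $x\in K$ and a $k$-dimensional affine subspace $W+x$ through $x$ with $\hdim\!\big(K\cap(W+x)\big)=s-(n-k)>0$.

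The next step is the heart of the matter: I would reduce to an \emph{orthogonality} statement inside the $k$-plane. The key point is that if $A\su\R^k$ has positive Hausdorff dimension and contains no two distinct points $u,v$ with $u\perp v$ (orthogonal \emph{as vectors}, i.e. the angle at the origin... no) --- rather, the relevant fact is that if $B=K\cap(W+x)$ contains no right angle, then in particular for the fixed vertex $x$, the \emph{directions} $\{(b-x)/|b-x| : b\in B\setminus\{x\}\}$ form a subset $S$ of the unit sphere $S^{k-1}$ of $W$ containing no two orthogonal unit vectors. Since the radial projection $b\mapsto (b-x)/|b-x|$ is Lipschitz on $B$ away from a neighborhood of $x$ (and $x$ contributes nothing to dimension), $S$ has positive Hausdorff dimension in $S^{k-1}$, hence in particular $S$ is infinite, and one wants: a subset of $S^{k-1}$ avoiding all orthogonal pairs cannot be ``too large''. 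For the clean statement we only need that such an $S$ cannot have $2$ points that are orthogonal --- but a set of positive dimension on $S^{k-1}$ is certainly big enough to contain an orthogonal pair once $k\ge 2$. This is where the parity enters: pairing up coordinates, $\R^n$ with $n$ even splits as $k=n/2$ orthogonal $2$-planes; a more careful argument is needed for the sphere. I would prove: \emph{any subset of $S^{k-1}$ of positive Hausdorff dimension contains two orthogonal unit vectors}, by the same Marstrand/Mattila slicing idea --- intersect with a generic great subsphere, or project onto a coordinate $2$-plane --- getting down to $S^1$, where a positive-dimensional (indeed, infinite) subset visibly contains two points at angular distance $90^\circ$ unless... in fact on $S^1$ a set avoiding orthogonal pairs lies in an open quarter-circle union its antipode, so has a real gap; but positive dimension does not by itself prevent lying in a quarter-circle. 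So the honest route is to go back one dimension less: use that $B$ has dimension $s-(n-k)$, slice $B$ itself (not its direction set) by Theorem~\ref{thm:intersect2} inside $W+x\cong\R^k$ down to a plane when $s-(n-k)>1$, i.e. $s>n-k+1$.

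Thus the cleanest plan: choose $k$ as small as possible subject to $s>n-k+1$, i.e. $k>n-s+1$, i.e. $k=\lfloor n-s\rfloor+2$ roughly; then $B=K\cap(W+x)$ has dimension $s-(n-k)>1$ and lies in $\R^k$ with $k$ small. When $s>n/2$ and $n$ even we can take $k=n/2+1$? No --- we want $k$ governed only by the hypothesis $s>\lceil (n+1)/2\rceil$ minus... Let me instead argue directly: with $s>n/2$ ($n$ even) set $m=n/2-1$ if $s>n/2$ guarantees $s>m$; then $\hdim B=s-m>n/2-(n/2-1)=1$, and $B$ sits in a $(n-m)=(n/2+1)$-plane --- still high-dimensional. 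This does not immediately close. The \textbf{main obstacle}, and the step I expect to require a genuinely new idea rather than iterated Marstrand slicing, is therefore exactly this: controlling right angles with a \emph{fixed} apex, which is a projective/spherical statement and is responsible for the factor-of-two improvement over Theorem~\ref{n-1}. I would handle it by the following lemma, proved by slicing on the sphere: \emph{if $E\su S^{k-1}$ is $\Hd^t$-measurable with $0<\Hd^t(E)<\infty$ and $t>(k-1)/2$ then $E$ contains two orthogonal vectors} --- iterate by intersecting $E$ with generic rotated copies of $S^{k-2}=S^{k-1}\cap V$ for hyperplanes $V$, losing $1$ in dimension each time (a spherical analogue of Theorem~\ref{thm:intersect2}, which holds by the same proof), until one lands on $S^1$ with a positive-dimensional, hence infinite, hence orthogonal-pair-containing set --- wait, again the quarter-circle issue. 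The resolution: on $S^1$, a set with \emph{two} orthogonal points is what we need, and any subset of $S^1$ with more than, say, four points that is invariant under nothing... no. The truly correct final reduction is to $S^2$ or to use that after slicing to dimension $>0$ on $S^{k-1}$ one can find, by a compactness/measure argument using that $E$ meets \emph{generic} great circles in a positive-dimensional set, two points $u\in E$ and $v\in E$ with $v$ in the great circle of $u^\perp$; this is the content of ``$E\cap u^\perp$ is nonempty for many $u$'' and follows from a Fubini argument on $\{(u,v)\in E\times E: u\perp v\}$ once $\hdim E$ is large enough relative to $k$. I would make this precise, get the threshold $(k-1)/2$, transfer it via the Lipschitz radial projection to $B$ (threshold $s-(n-k)>(k-1)/2$), and then optimize over $k$: the constraint becomes $2s-2(n-k)>k-1$, i.e. $k>2n-2s-1$, combined with needing $k\le n$; the best choice gives $s>\lceil (n+1)/2\rceil$-type bounds, matching the even/odd split in the statement. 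Finally I would double-check the borderline integer constraints ($m<s<n$ strictly in Theorem~\ref{thm:intersect2}) and handle small $n$ (e.g. $n=2$: $C(2,90^\circ)\le 1$ already from Theorem~\ref{thm:bigsets_in_the_plane}) as base cases.
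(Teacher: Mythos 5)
Your proposal does not close, and the gap is precisely at the step you yourself flag as needing ``a genuinely new idea'': the claim that a subset of $S^{k-1}$ of Hausdorff dimension greater than $(k-1)/2$ must contain two orthogonal vectors. You never prove this; each of your sketches (iterated slicing down to $S^1$, then a Fubini argument on $\{(u,v):u\perp v\}$) runs into the quarter-circle obstruction that you correctly identify and then do not resolve. Two further concrete problems: (a) the transfer from $B$ to its direction set $S$ is backwards --- Lipschitzness of the radial projection gives $\hdim(S)\le\hdim(B\setminus\{x\})$, an \emph{upper} bound, whereas you need a lower bound on $\hdim(S)$, and no such bound follows from Lipschitzness ($B$ could a priori project onto a very small set of directions); (b) if your lemma did hold with threshold $(k-1)/2$ and the transfer did preserve dimension, then taking $k=n$ (no initial slicing at all) your own optimization would give $C(n,90^\circ)\le(n-1)/2$, contradicting M\'ath\'e's construction \cite{mathe} of an $n/2$-dimensional compact set in $\R^n$ containing no right angle, which the paper cites as the matching lower bound. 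So the route through a fixed-apex spherical statement cannot work at the threshold you claim.

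The paper's proof sidesteps the direction-set problem entirely. By Theorem~\ref{thm:intersect2} with $m=n/2$ ($n$ even), for $\Hd^s$-almost every $x\in K$ the set of $W\in G(n,n/2)$ with $\hdim\bigl(K\cap(x+W)\bigr)=s-n/2>0$ has full $\gamma_{n,n/2}$-measure; since $W\mapsto W^\perp$ preserves $\gamma_{n,n/2}$, one can choose a single $W$ for which both $K\cap(x+W)$ and $K\cap(x+W^\perp)$ have positive dimension. Any $y\in K\cap(x+W)$ and $z\in K\cap(x+W^\perp)$ with $y,z\ne x$ then satisfy $y-x\in W$ and $z-x\in W^\perp$, so these two vectors are \emph{automatically} orthogonal --- no statement about orthogonal pairs inside a thin subset of the sphere is ever needed. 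The odd case is identical with complementary subspaces of dimensions $(n+1)/2$ and $(n-1)/2$. The factor-of-two gain over Theorem~\ref{n-1} comes from needing each of the two complementary slices to have merely positive dimension, not from a fixed-apex angle theorem; this orthocomplement trick is the idea your proposal is missing.
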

\begin{proof}
First suppose that $n$ is even. Let $s>n/2$ and let $K$ be a compact $s$-set. From
Theorem \ref{thm:intersect2} we know that there exists a point $x\in K$ such that
\begin{equation}\label{eq:1}
\hdim\big(K \cap (x+W)\big)=s-n/2>0
\end{equation}
for $\gamma_{n,n/2}$ almost all $W\in G(n,n/2)$. There exists a $W\in G(n,n/2)$ such that (\ref{eq:1}) holds both
for $W$ and $W^\bot$. As $(x+W)\cap(x+W^\bot)=\{x\}$, by choosing a $y \in K \cap (x+W)$ and $z \in K \cap (x+W^\bot)$ such that $x\ne y$ and $x\ne z$, we find a right angle at $x$ in the triangle $xyz$.

Now suppose that $n$ is odd, $s>(n+1)/2$ and $K$ is a compact $s$-set. With a similar argument we can conclude that there exist
$x\in K$ and $W\in G(n,(n+1)/2)$ such that $\hdim\big(K \cap (x+W)\big)=s-(n+1)/2>0$ and $\hdim\big(K \cap (x+W^\bot)\big)=s-(n-1)/2>1$. If $y \in K \cap (x+W) \setminus \{x\}$ and $z \in K \cap (x+W^\bot)\setminus \{x\}$, then there is again a right angle at $x$ in the triangle $xyz$.
\end{proof}

\begin{rem}
By the following result of the fifth author (M\'ath\'e \cite{mathe})
the above estimate is sharp if $n$ is even:
for any $n$ there exists a compact set of Hausdorff dimension $n/2$
in $\Rn$ that does not contain $90^\circ$.
Therefore if $n$ is even, we have $C(n,90^\circ)=n/2$.

The construction uses number theoretic ideas and
even though the set contains angles arbitrarily close to $90^\circ$,
it succeeds to avoid the right angle.
In the next section we will present a different approach
where the constructed sets avoid not only a certain angle $\alpha$
but also a whole neighborhood of $\alpha$.
\end{rem}

\section{A self-similar construction}
\label{construction}

In this section we construct a self-similar set in $\R^n$
with large dimension such that it does not contain a certain angle
$\alpha \in (0,180^\circ)$.
On the negative side, our method does not work for the angles
$60^\circ$, $90^\circ$ and $120^\circ$. On the positive side,
the presented sets will avoid a whole neighborhood of $\alpha$, not only $\alpha$.

%Some later results will show that these angles
%are actually more difficult to avoid than other angles.
%They also play a special role in the next simple lemma.
We start with two simple lemmas.
\begin{lemma}\label{thm:angles_in_simplices}
Let $P_0,\ldots, P_n$ be the vertices of a regular $n$-dimensional simplex. For any quadruples of indices $(i,j,k,l)$ with $i\ne j$ and $k\ne l$, the angle between the lines $P_iP_j$ and $P_kP_l$ is either $0$, $60^\circ$ or $90^\circ$.
\end{lemma}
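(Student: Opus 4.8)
The plan is to place the regular $n$-simplex in a convenient coordinate system where the edge vectors have a transparent form, and then simply compute all possible inner products between normalized edge vectors. The standard trick is to realize the regular $n$-simplex as the convex hull of the standard basis vectors $e_1,\ldots,e_{n+1}$ in $\R^{n+1}$ (living on the hyperplane $\sum x_i = 1$); then $P_i = e_i$ and an edge vector is $P_i - P_j = e_i - e_j$ for $i \ne j$. Each such vector has squared length $2$, so the normalized edge vectors are $(e_i - e_j)/\sqrt{2}$.

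Next I would compute $\langle e_i - e_j,\, e_k - e_l \rangle$ for index pairs with $i \ne j$ and $k \ne l$. Expanding, this equals $[i=k] - [i=l] - [j=k] + [j=l]$, which is an integer taking values in $\{-2,-1,0,1,2\}$. After dividing by the product of the lengths $\sqrt{2}\cdot\sqrt{2} = 2$, the cosine of the angle between the two lines is one of $0, \pm 1/2, \pm 1$. Since we care about the angle between \emph{lines} (not oriented vectors), we take absolute values, so $|\cos\theta| \in \{0, 1/2, 1\}$, i.e. $\theta \in \{90^\circ, 60^\circ, 0^\circ\}$, which is exactly the claim. One should note the values $\pm 2$ only occur when $\{i,j\} = \{k,l\}$, giving $|\cos\theta| = 1$ (the same line), consistent with the statement.

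There is essentially no hard step here; the only thing to be a little careful about is the bookkeeping of the case analysis on how the index sets $\{i,j\}$ and $\{k,l\}$ overlap (disjoint, sharing one index, or equal), to confirm that every value in $\{-2,-1,0,1,2\}$ is actually attained or at least that no other value is possible. Disjoint pairs give inner product $0$ (angle $90^\circ$); pairs sharing exactly one index give $\pm 1$ (angle $60^\circ$); equal pairs give $\pm 2$ (angle $0^\circ$). This exhausts all cases and completes the proof.
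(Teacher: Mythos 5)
Your proof is correct and complete. It takes a different (and more explicit) route than the paper: the paper simply observes that $\{P_i,P_j,P_k,P_l\}$ spans a regular sub-simplex of dimension at most three and declares the assertion ``clear'' in each case, leaving the reader to recall the angles between edges of a segment, an equilateral triangle, and a regular tetrahedron. You instead realize the regular $n$-simplex as the convex hull of the standard basis vectors $e_1,\ldots,e_{n+1}$ in $\R^{n+1}$ (legitimate, since any two regular $n$-simplices are similar and similarity preserves angles) and compute $\langle e_i-e_j,\,e_k-e_l\rangle = [i=k]-[i=l]-[j=k]+[j=l] \in \{-2,-1,0,1,2\}$ with each edge of length $\sqrt2$, so $|\cos\theta|\in\{0,\tfrac12,1\}$. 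Your case analysis on the overlap of $\{i,j\}$ and $\{k,l\}$ is the same reduction the paper hints at, but the coordinate computation replaces the appeal to geometric intuition with a one-line algebraic verification; what you lose is only brevity, and what you gain is a fully self-contained argument that needs no picture of a tetrahedron. Both proofs are fine.
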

\begin{proof}
The set $\{P_i,P_j,P_k,P_l\}$ is the set of vertices of a one-, two-, or three-dimensional regular simplex. Our assertion is clear in either of these cases.
\end{proof}

\begin{defin}
A self-similar set $K$ is said to satisfy the
\emph{strong separation condition} if there
exist similarities $S_0,\ldots,S_k$ such
that $K=S_0(K)\cup \cdots \cup S_k(K)$ and the sets
$S_i(K)$ are pairwise disjoint.

We say that the transformation $f:\R^n\to \R^n$ is a \emph{homothety} if $f$
is the identity or if $f$ has exactly one fixed point (say $O$), and there exists a nonzero real number $r$ such that for any point $P$ we have $f(P)-O=r(P-O)$. The point $O$ is called the \emph{center of the homothety}, and $r$ is called the \emph{ratio of magnification}. We call $K$ \emph{homothetic} if $S_i$ is a homothety for $i=0,1,\ldots,k$.
\end{defin}

\begin{lemma}\label{lemma:same-direction}
Let $K$ be a homothetic self-similar set,
in other words suppose that $K=S_0(K)\cup\ldots\cup S_m(K)$, where each
$S_i$ is a homothety.

Then, for any $x_0,x_1\in K$, $x_0\ne x_1$ there exist $y_0,y_1$ and
$i\ne j$ such that
$y_0\in S_i(K)$ and $y_1\in S_j(K)$ and $y_0-y_1$ is parallel to $x_0-x_1$.
\end{lemma}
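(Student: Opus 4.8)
The plan is to exploit the one special property of homotheties that fails for general similarities: a homothety $f$ with ratio of magnification $r$ satisfies $f(a)-f(b)=r(a-b)$ for all $a,b$, so it sends every vector to a parallel vector. Consequently the same is true of $f^{-1}$ (a homothety with ratio $1/r$) and of every finite composition of homotheties and their inverses. Recall also that, $K$ being a self-similar set, each $S_i$ is a \emph{contracting} similarity, so $|r_i|<1$; set $\rho\mathdef \max_i|r_i|<1$.

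Now fix distinct $x_0,x_1\in K$ and run the following reduction. Given the current pair of distinct points $y_0,y_1\in K$, if there is an index $i$ with $y_0,y_1\in S_i(K)$, replace $(y_0,y_1)$ by $(S_i^{-1}(y_0),S_i^{-1}(y_1))$. Since $S_i^{-1}$ maps $S_i(K)$ bijectively onto $K$, the new pair again consists of two distinct points of $K$; since $S_i^{-1}$ is a homothety, the new difference vector is parallel to the old one, hence to $x_0-x_1$; and the Euclidean distance of the pair gets multiplied by $|r_i|^{-1}\ge \rho^{-1}>1$.

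This reduction can be performed only finitely many times: after $t$ steps the distance of the pair is at least $\rho^{-t}\|x_0-x_1\|$, which exceeds $\diam(K)$ once $t$ is large enough — impossible, since the pair always lies in $K$. Hence after finitely many steps we reach a pair of distinct points $y_0,y_1\in K$ with $y_0-y_1$ parallel to $x_0-x_1$ such that \emph{no} single index $i$ satisfies $y_0,y_1\in S_i(K)$. Because $K=S_0(K)\cup\cdots\cup S_m(K)$, pick $i$ with $y_0\in S_i(K)$ and $j$ with $y_1\in S_j(K)$; if $i=j$ this would be a common index, a contradiction, so $i\ne j$, and $y_0,y_1,i,j$ are as required.

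There is no serious obstacle here; the two points to get right are that homotheties (unlike arbitrary similarities, which may involve a rotation) preserve directions — so the argument genuinely uses the hypothesis that $K$ is homothetic — and that termination of the reduction relies on the contraction ratios satisfying $|r_i|<1$, which is part of $K$ being a self-similar set.
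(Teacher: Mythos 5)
Your proof is correct and is essentially the paper's argument run in reverse: the paper fixes symbolic addresses of $x_0,x_1$, takes the first level $k$ at which they differ, and pulls back by the common prefix $S=S_{i_{0,1}}\circ\cdots\circ S_{i_{0,k-1}}$ (a homothety), whereas you reach the same branching level by iteratively applying inverse homotheties, with termination guaranteed by the expansion of distances rather than by the coincidence of points with identical codings. Both hinge on exactly the two facts you flag --- homotheties (and their inverses and compositions) preserve directions, and the contraction ratios force the process to stop --- so no new idea is involved.
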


\begin{proof}
Since $x_0,x_1\in K$, there exist sequences
$i_{0,1}, i_{0,2}, \ldots$ and $i_{1,1}, i_{1,2}, \ldots$ such that
\[
x_0\in S_{i_{0,1}}\Big(S_{i_{0,2}}\big(\cdots S_{i_{0,k}}(K)\big)\Big) \quad\textrm{and}\quad x_1\in S_{i_{1,1}}\Big(S_{i_{1,2}}\big(\cdots S_{i_{1,k}}(K_1)\big)\Big)
\]
for any positive integer $k$.

Let $k$ be the smallest positive integer such that $i_{0,k}\ne i_{1,k}$ (such a $k$ exists else $x_0$ and $x_1$
would coincide). Set
\[S\mathdef S_{i_{0,1}}\Big(S_{i_{0,2}}\big(\cdots S_{i_{0,k-1}}(\cdot)\big)\Big).
\]
There exist $y_0\in S_{i_{0,k}}(K)$ and $y_1\in S_{i_{1,k}}(K)$ such that $x_0=S(y_0)$ and $x_1=S(y_1)$. Since $S$ is also a homothety, $y_0-y_1$ is parallel to $x_0-x_1$.
\end{proof}

\begin{thm}\label{thm:selfsimilar}
For any $\varepsilon > 0$ there exists a constant $c_\varepsilon > 0$ such that 
for every $n\ge2$ there exists a compact homothetic self-similar set $K\subset \R^n$ 
with $\hdim(K) \ge c_\varepsilon \log n$ and with the property that 
all angles occurring in the set fall into the $\varepsilon$-neighborhood of
the angles $\{0, 60^\circ, 90^\circ, 120^\circ, 180^\circ \}$. 

In particular, for any 
$\alpha\in (0,180^\circ)\setminus \{60^\circ, 90^\circ, 120^\circ\}$ 
we construct a compact set of dimension $c(\alpha) \log n$ 
that does not contain the angle $\alpha$; 
moreover, the set even avoids a small neighborhood of $\alpha$.
\end{thm}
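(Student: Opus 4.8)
The plan is to build $K$ as a homothetic self-similar set whose first-level pieces $S_0(K),\ldots,S_m(K)$ are tiny copies of $K$ placed at (neighborhoods of) the vertices $P_0,\ldots,P_m$ of a regular $m$-dimensional simplex, where $m$ will be chosen as large as the ambient dimension $n$ permits; since a regular $m$-simplex sits in $\R^m$, we may take $m=n$, so there are $n+1$ pieces. Concretely, fix a small ratio $r>0$ and set $S_i(x)=P_i+r(x-P_i)$, so each $S_i$ is a homothety with center $P_i$ and ratio $r$; for $r$ small enough the sets $S_i(K)$ are pairwise disjoint, giving the strong separation condition. The attractor $K$ then has Hausdorff dimension $\hdim(K)=\log(n+1)/\log(1/r)$, so to get $\hdim(K)\ge c_\varepsilon\log n$ it suffices to bound $1/r$ by an absolute constant depending only on $\varepsilon$; the whole point is that $r$ must be taken small enough to control angles, and I must show the required smallness of $r$ does \emph{not} deteriorate as $n\to\infty$.

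The key geometric step is the angle estimate. Take three distinct points $x,y,z\in K$; I want the angle at $x$ between $y-x$ and $z-x$ to lie within $\varepsilon$ of $\{0,60^\circ,90^\circ,120^\circ,180^\circ\}$. Apply Lemma~\ref{lemma:same-direction} twice: there are first-level pieces $S_i(K),S_j(K)$ with $i\ne j$ and points whose difference is parallel to $y-x$, and similarly $S_k(K),S_l(K)$ with $k\ne l$ and a difference parallel to $z-x$. Thus the direction of $y-x$ is close to the direction of $P_iP_j$ and the direction of $z-x$ is close to that of $P_kP_l$ — "close" being quantified by the small diameter of the level-one pieces relative to the gaps between $P_i$'s, i.e.\ by how small $r$ is. By Lemma~\ref{thm:angles_in_simplices}, the angle between lines $P_iP_j$ and $P_kP_l$ is exactly $0$, $60^\circ$ or $90^\circ$; hence the angle between $y-x$ and $z-x$ is within a controlled error of $0,60^\circ,90^\circ,120^\circ$ or $180^\circ$. (The ambiguity $60^\circ$ vs.\ $120^\circ$ and $0$ vs.\ $180^\circ$ arises because passing from lines to the vectors $y-x$, $z-x$ involves a choice of orientation.)

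The main obstacle — and the crux of why the answer is only $c_\varepsilon\log n$ rather than something linear in $n$ — is making the error in the previous paragraph uniformly small in $n$. The issue is that with $n+1$ vertices of a regular $n$-simplex packed into a bounded region, the minimal gap between distinct vertex-pairs' \emph{directions} could shrink with $n$; if it did, one would need $r\to 0$ and the dimension bound would collapse. So the heart of the argument is a quantitative lemma: the diameter of a level-$1$ piece is $r\cdot\diam K$, and iterating, a chain of the "same-direction" reductions shows $y-x$ differs in direction from some edge $P_iP_j$ by an amount bounded by a geometric series in $r$ times an $n$-independent constant (after normalizing the simplex to unit edge length, the vertices and all pairwise difference-directions live on a fixed-size configuration, and the relevant separation between \emph{the finitely many possible limiting angles} $0,60^\circ,90^\circ$ is the absolute constant $60^\circ$, not something depending on $n$). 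Therefore one can choose $r=r(\varepsilon)$ — e.g.\ a fixed small multiple of $\varepsilon$ — independent of $n$ so that every realized angle lands within $\varepsilon$ of the five special values; then $c_\varepsilon = 1/\log(1/r(\varepsilon))$ works. The final "in particular" clause is immediate: given $\alpha\notin\{0,60^\circ,90^\circ,120^\circ,180^\circ\}$, let $\varepsilon$ be half the distance from $\alpha$ to this set and apply the first part with $c(\alpha)=c_\varepsilon$.
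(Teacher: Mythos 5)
Your proposal follows essentially the same route as the paper's proof: the same Sierpi\'nski-gasket-type construction on a regular $n$-simplex with homothety ratio $r(\varepsilon)$ chosen independently of $n$, the same use of Lemma~\ref{lemma:same-direction} to reduce every occurring direction to a near-edge direction and of Lemma~\ref{thm:angles_in_simplices} to classify the limiting angles, and the same key quantitative point that the directional error is controlled uniformly in $n$ because the simplex is normalized to unit edge length. The only cosmetic difference is that a single application of the same-direction lemma (rather than a geometric series of reductions) already yields the explicit bound $\cos\varphi\ge(1-2r)/(1+2r)$ used in the paper.
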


\begin{proof}
Our set $K$ will be a modified version of the Sierpi\'nski gasket.
Take a regular $n$-dimensional simplex with unit edge length in
$\R^n$, denote its vertices by $P_0, \ldots, P_n$ and let $K_1\mathdef \mathrm{conv}(\{P_0,\ldots, P_n\})$.
Fix a $0<\delta<1/2$ and denote by $S_i$ the homothety of ratio $\delta$ centered at $P_i$ ($i=0,\ldots,n$). The similarities $S_i$ ($i=0,\ldots,n$) uniquely determine a self-similar set $K$ which can also be written in the following form:
\[
K\mathdef \bigcap_{k=1}^\infty \bigcup_{(i_1,\ldots,i_k)\in \{0,\ldots,n\}^k} S_{i_1}\Big(S_{i_2}\big(\cdots
S_{i_k}(K_1)\big)\Big).
\]
The set $K$ clearly satisfies the strong separation condition. By \cite[Theorem 4.14]{mattila}, the dimension of $K$ is the unique positive number $s$ for which $(n+1)\delta^s=1$, therefore
\[
\hdim(K)=\frac{\log(n+1)}{\log \frac{1}{\delta}}.
\]

We say that a \emph{direction} $V\in G(n,1)$ \emph{occurs in a set} $H\subset \R^n$ if there are $x,y\in H$, $x\ne y$ such that $x-y$ is parallel to $V$. We will show that the directions occurring in $K$ are actually \emph{close} to the directions occurring in $\{P_0, \ldots, P_n\}$.

Let $V\in G(n,1)$ which occurs in $K$ and let $x_0,x_1\in K$, $x_0\ne x_1$ such that $x_0-x_1$ is parallel to $V$.
By Lemma~\ref{lemma:same-direction}
there exist $y_0,y_1\in K$, $y_0\ne y_1$ such that $y_0-y_1$ is also parallel to $V$ and there exist $i\ne j$ with $y_0\in S_i(K)$ and $y_1\in S_j(K)$.

We may assume without loss of generality that $y_0\in S_0(K)$, $y_1\in S_1(K)$.
We will show that the angle $\varphi$ between $y_0-y_1$ and $P_0-P_1$ is small, which is equivalent with $\cos \varphi$ being close to 1. Let $h_i = y_i - P_i$. We have $||h_i||\le\delta$ ($i=0,1$), hence
\[
\cos \varphi = \frac{\langle y_0-y_1, P_0-P_1\rangle}{||y_0-y_1||\cdot ||P_0-P_1||}=
\frac{1+\langle h_0-h_1, P_0-P_1\rangle}{||(P_0-P_1)+(h_0-h_1)||} \ge
\frac{1-2\delta}{1+2\delta}.
\]
Set $\varepsilon(\delta)=2\arccos(\frac{1-2\delta}{1+2\delta})$. Lemma \ref{thm:angles_in_simplices} implies that the angles occurring in $K$ are in the union of the following intervals: $[0,\varepsilon]$, $[60^\circ-\varepsilon,60^\circ+\varepsilon]$, $[90^\circ-\varepsilon,90^\circ+\varepsilon]$, $[120^\circ-\varepsilon,120^\circ+\varepsilon]$, $[180^\circ-\varepsilon,180^\circ]$. If $\delta$, and therefore $\varepsilon$ is sufficiently small, then neither of these intervals contain $\alpha$.
%This clearly implies the statement of our theorem.
\end{proof}

%\begin{rem}
The first author (Harangi \cite{harangi}) improved this result: 
he used the same methods to show that there exists a set with the same properties 
and with dimension $c_\varepsilon n$. 
Moreover, even for the angles $60^\circ$ and $120^\circ$
it is possible to construct large dimensional
homothetic self-similar sets avoiding these angles. 
%\end{rem}

However, as the next theorem shows, one cannot
avoid the right angle with similar constructions.

\begin{thm}
\label{rectangle}
Let $K\subset \R^n$ be a compact self-similar set.
Suppose that we have homotheties $S_0,\ldots,S_k$ with ratios less than 1
such that $K=S_0(K)\cup S_1(K) \cup \cdots \cup S_k(K)$ and
the sets $S_i(K)$ are pairwise disjoint
(that is, the strong separation condition is satisfied).
Then $K$ contains four points that form a non-degenerate rectangle
given that $\hdim(K)>1$.
\end{thm}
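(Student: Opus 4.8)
The plan is to exploit the homothetic self-similarity to produce two parallel segments with endpoints in $K$ of different scales, and then use a scaling argument to find within $K$ a genuine rectangle.

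First I would argue that $K$ contains two parallel segments of incommensurable lengths, both with endpoints in $K$. Since $\hdim(K)>1$, the set $K$ contains infinitely many points, so it contains some direction $V$ occurring between two distinct points $x_0,x_1\in K$. Applying Lemma~\ref{lemma:same-direction} (or rather iterating it), I would descend into the cylinder structure: for each $k$ there is a word $(i_1,\ldots,i_k)$ and points $y_0,y_1$ in two \emph{distinct} children of the cylinder $S_{i_1}\circ\cdots\circ S_{i_k}(K)$ with $y_0-y_1$ parallel to $V$. Because all maps are homotheties (no rotation), the segment $[y_0,y_1]$ inside the cylinder $S_{i_1}\circ\cdots\circ S_{i_k}(K)$ is a scaled-down copy (ratio $r_{i_1}\cdots r_{i_k}$, which tends to $0$) of a segment between two points of $K$ lying in distinct pieces $S_i(K)$, $S_j(K)$. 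Rescaling that cylinder back up, I get, at every small scale, a segment in direction $V$ with endpoints in $K$; in particular there are two such segments $[a,b]$ and $[c,d]$ with $\|b-a\|$ and $\|d-c\|$ in a prescribed ratio (e.g. an irrational ratio, or simply very different lengths), both parallel to $V$, both with endpoints in $K$.

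Next comes the main step: turning two parallel segments into a rectangle. Here the idea is to translate one copy of $K$ against another. Because $K$ is self-similar, for each finite word $w=(i_1,\ldots,i_k)$ the piece $K_w:=S_{i_1}\circ\cdots\circ S_{i_k}(K)$ is a homothetic copy $\phi_w(K)$ of $K$ with ratio $\lambda_w\to 0$. Fix the direction $V$ occurring in $K$, realised by $u_0,u_1\in K$ with $u_1-u_0$ parallel to $V$; then inside $K_w$ we have the two points $\phi_w(u_0),\phi_w(u_1)$ spanning a segment of length $\lambda_w\|u_1-u_0\|$ parallel to $V$. Now I want two cylinders $K_w$ and $K_{w'}$, both containing such a $V$-segment, positioned so that the four endpoints form a rectangle: this requires the vector between (say) $\phi_w(u_0)$ and $\phi_{w'}(u_0)$ to be \emph{perpendicular} to $V$, and the two $V$-segments to have \emph{equal} length, i.e. $\lambda_w=\lambda_{w'}$. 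To arrange equal ratios I would pick $w,w'$ to be permutations/rearrangements of the same multiset of symbols (then $\lambda_w=\lambda_{w'}$ automatically); to arrange the perpendicularity I would use that as $w$ ranges over all words of a given length $k$, the centers $\phi_w(u_0)$ are dense-ish / spread out in $K$ at scale $\lambda$, and more importantly I can compose with an extra initial block to move one copy relative to the other. The cleanest route: take any two distinct points $p,q\in K$, look at the segment $[p,q]$, and inside the two disjoint cylinders sitting over $p$ and over $q$ at a deep common level, place scaled copies of the pair $(u_0,u_1)$ with the \emph{same} small ratio; then the rectangle is $p',q',q'',p''$ where $p',p''$ are the images of $u_0,u_1$ in the cylinder over $p$ and $q',q''$ in the cylinder over $q$ — provided $q-p$ is perpendicular to $u_1-u_0$. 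So the real content is: \emph{$K$ contains two distinct points whose connecting direction is perpendicular to some direction occurring in $K$}, i.e. $K$ contains two perpendicular directions. This in turn follows because if \emph{every} pair of directions occurring in $K$ made an acute-or-obtuse (never right) angle, the set of occurring directions would be very restricted; but $\hdim(K)>1$ combined with the projection/intersection machinery (Theorem~\ref{thm:intersect2} in $\R^2$, or a direct argument on a $2$-plane where $K$ projects to positive dimension) forces the direction set to be rich enough to contain an orthogonal pair.

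The hard part, and where I would spend the most care, is exactly that last claim — guaranteeing a \emph{right-angle pair of directions} occurring in $K$, together with matching lengths. One subtlety: Lemma~\ref{lemma:same-direction} only gives \emph{one} direction at a time; to get two directions at right angles simultaneously realised by \emph{parallel translates} of the same scaled pattern, I expect to need the separation condition crucially (so that distinct deep cylinders are genuinely disjoint and can host independent copies) plus a pigeonhole over the $(k+1)^k$ cylinders at level $k$: among their centers, two must be (nearly, then exactly by a limiting/compactness argument) separated by a vector orthogonal to a fixed occurring direction. Alternatively, and perhaps more robustly, I would first establish that the closure of the set of directions occurring in $K$ is either a single point (impossible once $\hdim K>1$, since then $K$ would lie on a line) or contains a nontrivial arc, hence contains two orthogonal directions; then realise the rectangle by the two-disjoint-cylinders construction above, choosing the cylinders so that one occurring direction is the side direction and the orthogonal one is the translation direction. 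I would then check non-degeneracy (the four points are distinct and the sides have positive length) — routine once the ratios are equal and nonzero and the two cylinders are disjoint.
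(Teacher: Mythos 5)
Your overall skeleton is the right one and is in fact the paper's: two homothetic cylinder maps with equal ratios differ by a pure translation, so a pair of points of $K$ whose difference is perpendicular to that translation vector yields a rectangle with vertices in $K$. But the two steps you flag as "the hard part" are exactly where the proposal has genuine gaps. First, the perpendicular-chord claim. What you actually need is: for a \emph{given} vector $v$, there exist $x\neq y$ in $K$ with $x-y\perp v$. Your proposed justifications do not establish this. The dichotomy "the closure of the set of occurring directions is a single point or contains a nontrivial arc" is false (it can be a Cantor set on the sphere), and a short arc need not contain an orthogonal pair anyway. Also, mere injectivity of the projection onto a line parallel to $v$ (which is all you get from "no occurring direction is perpendicular to $v$") does not bound $\hdim K$ by $1$, since continuous inverses can raise Hausdorff dimension. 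The correct argument, and the place where the strong separation condition really enters, is this: by Lemma~\ref{lemma:same-direction} every occurring direction is realized by a pair of points lying in distinct pieces $S_i(K)$, $S_j(K)$, which are at distance at least some $c>0$; hence the set of occurring directions is \emph{compact}. If none of them were perpendicular to $v$, the angle to $v$ would be bounded away from $90^\circ$, the projection to the $v$-line would be bi-Lipschitz on $K$, and $\hdim(K)\le 1$, a contradiction. (Your stated role for the separation condition --- "so that distinct deep cylinders are genuinely disjoint and can host independent copies" --- is not where it is needed.)

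Second, the quantifiers in your rectangle construction are in the wrong order. You fix an occurring direction $V$, then seek $p,q\in K$ with $q-p\perp V$ and place equal-ratio cylinders "over $p$ and over $q$". But the cylinders containing given points $p,q$ need not have equal ratios, and even when $\lambda_w=\lambda_{w'}$ the translation vector between $\phi_w$ and $\phi_{w'}$ is the difference of their \emph{fixed points}, not $q-p$; so the perpendicularity you arranged does not transfer, and your fallback ("nearly, then exactly by a limiting/compactness argument") does not work, since limits of approximate rectangles at shrinking scales degenerate to a point. The clean fix is to choose the two words first: take $f=S_0$, $g=S_1$; then $f\circ g$ and $g\circ f$ are homotheties of the same ratio, so $g\circ f = (f\circ g)+w$ for a constant vector $w$ parallel to $P-Q$, where $P$ and $Q$ are their (distinct) fixed points. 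Now apply the perpendicular-chord claim to $v=P-Q$ to get $x,y\in K$ with $x-y\perp P-Q$; the four points $f(g(x))$, $f(g(y))$, $g(f(y))$, $g(f(x))$ then form a non-degenerate rectangle in $K$. With these two repairs your argument becomes the paper's proof.
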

\begin{proof}
We begin the proof by defining the following map:
\[
D:\enskip K\times K \setminus \{(x,x):x\in K\}\to S^{n-1};\quad (x,y)\mapsto \frac{x-y}{||x-y||}.
\]
We denote the range of $D$ by $\mathrm{Range}(D)$.
The set $\mathrm{Range}(D)$ can be considered as
the set of directions in $K$.

First we prove that if $K$ is such a self-similar set then $\mathrm{Range}(D)$ is closed.
By Lemma~\ref{lemma:same-direction}, for any $x,y\in K$, $x\ne y$ there exist
$x'\in S_i(K)$ and $y'\in S_j(K)$ for some $i\neq j$ such that
$x'-y'$ is parallel to $x-y$.
If $d(\cdot,\cdot)$ denotes the Euclidean distance then
\[
\min_{0\le i<j\le k} d(S_i(K),S_j(K))=c>0,
\]
so $\mathrm{Range}(D)$  actually equals to the image of $D$ restricted to the set $K\times K\setminus \{(x,y): d(x,y)<c\}$. As this is a compact set, the continuous image is also compact, and so $\mathrm{Range}(D)$ is indeed compact.

Next we show that for any $v\in S^{n-1}$ there exist $x,y \in K$, $x\ne y$
such that the vectors $v$ and $D(x,y)$ are perpendicular. If this was not
true, the compactness of $\mathrm{Range}(D)$ would imply that the orthogonal
projection $p$ to a line parallel to $v$ would be a one-to-one map on $K$ with
$p^{-1}$ being a Lipschitz map on $p(K)$.
This would imply $\hdim(K)\le1$, which is a contradiction.

For simplifying our notation, let $f\mathdef S_0$, $g\mathdef S_1$.
The homotheties $f \circ g$ and $g \circ f$ have the same ratio. Denote their
fixed points by $P$ and $Q$, respectively.
Since $P\ne Q$, there are $x,y\in K$, $x\ne y$ such that $x-y$ is
perpendicular to $P-Q$. It is easy to check that the points $f(g(x))$,
$f(g(y))$, $g(f(y))$ and $g(f(x))$ form a non-degenerate rectangle.
\end{proof}

%Actually, we conjecture that we cannot avoid the right angle
%in any analytic set with dimension greater than $1$.
%\begin{con} \label{con:90}
%Any analytic (compact) set with Hausdorff dimension greater than 1
%contains the right angle, that is, $C(n,90^\circ) = 1$.
%\end{con}

%
%%%%%%%%%%%%%%%%%%%%%%%%%%%%%%%%%%%%%%%%%%%%%%%%%%%%%%%%%%%%%%%%%%%%%%%%%%%%%%%%%%%%%%%
%
%    SECTION 4
%
%%%%%%%%%%%%%%%%%%%%%%%%%%%%%%%%%%%%%%%%%%%%%%%%%%%%%%%%%%%%%%%%%%%%%%%%%%%%%%%%%%%%%%%
%
\section{Finding angles close to a given angle}
\label{approx}

Theorem~\ref{thm:selfsimilar} 
showed that for any angles $0<\al<180^\circ$ and $\de>0$ such that
$0, 60^\circ, 90^\circ, 120^\circ, 180^\circ \not\in (\al-\de, \al+\de)$ 
there exist compact
sets of arbitrarily large Hausdorff dimension that do not contain
any angle from $(\al-\de,\al+\de)$. That is, using the notation we
introduced in Definition~\ref{def:ch}, we have $\Ch(\al,\de)=\infty$ if
$\al \neq 0, 60^\circ, 90^\circ, 120^\circ, 180^\circ$ and $\de$ is small enough.
In this section we show that the other claims of Table~\ref{table2} of the
introduction also hold. 
% In fact, in the results when $\Ch$ is
% estimated from above, in most cases we prove even stronger results by
% showing that instead of Hausdorff dimension it is enough to assume
% that the upper Minkowski dimension is bigger than $\Ch(\al,\de)$.

We start by proving that a set that does not contain angles near to
$90^\circ$ must be very small, it cannot have Hausdorff dimension bigger
than $1$. This makes $90^\circ$ very special since
the analogous statement would be false for any other angle
$\al\in(0,180^{\circ})$ (see Theorem~\ref{thm:selfsimilar} and
Remark~\ref{rem:sharp}). 
This result is clearly sharp since a line segment contains only $0$ and
$180^{\circ}$.

\begin{thm}
\label{Hausdorff1}
Any set $A\su\R^n$ $(n\ge 2)$
with Hausdorff dimension greater than 1 contains
angles arbitrarily close to the right angle.
Thus $\Ch(90^{\circ},\de)=1$ for  any $\de>0$.

\end{thm}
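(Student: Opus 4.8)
The plan is to reduce to the planar case and then exploit Theorem~\ref{thm:intersect_halflines}. First I would invoke the reductions made in the introduction: since the closure of $A$ contains an angle in $(90^\circ-\delta,90^\circ+\delta)$ if and only if $A$ does, and since an analytic (in particular Borel) set of positive $\Hd^s$ measure contains a compact $s$-set, it suffices to prove that a compact $s$-set $K$ with $1<s<2$, lying in some $\R^n$, contains angles arbitrarily close to $90^\circ$. I would next argue that we may assume $n=2$: take $s$ with $1<s<\hdim(A)$, pass to a compact $s$-set $K\su A$, and apply Theorem~\ref{thm:intersect2} with $m=s-1$... but cleaner is to first intersect with a generic affine $2$-plane. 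Actually the slickest route: fix any $s$ with $1<s<2$ and $s<\hdim A$; by Theorem~\ref{thm:intersect2} (applied in $\R^n$ with $n-m=2$, so $m=n-2$, giving intersection dimension $s'-m$) we can find a $2$-plane whose intersection with a compact $s'$-set has dimension strictly between $1$ and $2$. So without loss of generality $K\su\R^2$ is a compact $s$-set with $1<s<2$.

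Now the core of the argument is planar. Apply Theorem~\ref{thm:intersect_halflines}: for $\Hd^s\times\lambda$-almost every $(x,\vartheta)\in K\times[0,360^\circ)$ we have $\hdim(K\cap L_{x,\vartheta})=s-1>0$, in particular $K\cap L_{x,\vartheta}$ contains a point other than $x$. By Fubini, fix one point $x\in K$ for which $\hdim(K\cap L_{x,\vartheta})=s-1$ for almost every $\vartheta$. The set $G=\{\vartheta\in[0,360^\circ):\hdim(K\cap L_{x,\vartheta})=s-1\}$ then has full Lebesgue measure in $[0,360^\circ)$. Given $\delta>0$, I want $\vartheta_1,\vartheta_2\in G$ with $|\vartheta_1-\vartheta_2|$ within $\delta$ of $90^\circ$: since $G$ has full measure, $G\cap(G+\beta)$ is nonempty (indeed of full measure) for every fixed $\beta$, so pick any $\beta\in(90^\circ-\delta,90^\circ+\delta)$ and choose $\vartheta_2\in G$ with $\vartheta_1=\vartheta_2+\beta\in G$. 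Then pick $x_i\in(K\cap L_{x,\vartheta_i})\setminus\{x\}$ for $i=1,2$; the angle at $x$ in the triangle $x x_1 x_2$ equals the angle between the two half-lines, which is $\min(\beta,360^\circ-\beta)$, hence within $\delta$ of $90^\circ$. Letting $\delta\to0$ gives angles arbitrarily close to the right angle. The upper bound $\Ch(90^\circ,\delta)\le 1$ follows, and the matching lower bound is the line segment remark already in the text, so $\Ch(90^\circ,\delta)=1$.

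The main obstacle is the reduction to dimension two done correctly: Theorem~\ref{thm:intersect2} requires $m<s<n$, so to get a planar slice of dimension in $(1,2)$ one needs the original dimension to exceed $n-1$, which is not assumed — we only assume $\hdim A>1$. The fix is to not reduce the ambient dimension via slicing but instead to run the half-line argument directly in $\R^n$: Theorem~\ref{thm:intersect_halflines} as stated is planar, but Marstrand's half-line result (or a projection argument) should work in $\R^n$ too; alternatively, one observes that a compact $s$-set with $1<s<2$ in $\R^n$ is, by Theorem~\ref{thm:intersect2} applied with $n-m=2$ (legitimate since $s<2\le n$ forces $m=n-2<s$ only when $s>n-2$, which holds once $s>1$ and $n=2$ or $n=3$, but fails for large $n$)... so the genuinely careful step is: reduce first to a compact $s$-set with $s$ slightly above $1$, then intersect with a generic $2$-plane using Theorem~\ref{thm:intersect2} with parameter choices that keep the slice dimension in $(\,s-(n-2),\,?)$ — one must check $s-(n-2)>1$, i.e.\ $s>n-1$, which again needs large dimension. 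Hence the honest approach is to establish the half-line intersection statement (Theorem~\ref{thm:intersect_halflines}) in $\R^n$ directly, or to restrict attention first to a suitable $2$-dimensional slice chosen so that its intersection with $K$ still has dimension $>1$; Theorem~\ref{thm:intersect2} does give slices of dimension exactly $s-(n-2)$ for generic $2$-planes, so one should take $s$ close enough to $\hdim(A)$ and $n$ fixed — for fixed $n$ and $\hdim(A)>n-1$ this works, but for $\hdim(A)$ merely $>1$ with large $n$ one cannot slice down, so the planar reduction must be replaced by the intrinsic $n$-dimensional half-line statement. I would therefore state and use the $\R^n$ version of Theorem~\ref{thm:intersect_halflines}, whose proof is the same projection-slicing argument of Marstrand, and then the angle-extraction step above goes through verbatim in $\R^n$.
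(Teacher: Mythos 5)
You correctly diagnose that the reduction to a planar slice fails when $n$ is large (slicing by $2$-planes via Theorem~\ref{thm:intersect2} needs $s>n-2$, which is not available from $\hdim A>1$ alone), but the repair you propose does not work either. There is no usable ``$\R^n$ version of Theorem~\ref{thm:intersect_halflines}'' with conclusion $\hdim(A\cap L_{x,\vartheta})=s-1$: a line has codimension $n-1$, so the generic intersection drops the dimension by $n-1$, not by $1$. For $n\ge3$ and $1<s<n-1$, almost every line through almost every point of a compact $s$-set meets the set only in that point; moreover the set of directions $\{(z-x)/\|z-x\|:z\in A\setminus\{x\}\}$ can have measure zero in $S^{n-1}$ (take $A$ inside a $2$-plane of $\R^{10}$), so the Fubini/translation trick you use to find two ``good'' directions at angle close to $90^\circ$ has no analogue on $S^{n-1}$. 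Thus the final step of your argument --- the one carrying all the weight in high dimensions --- rests on a false statement.

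The paper's proof avoids this by slicing with \emph{hyperplanes} rather than lines: apply Theorem~\ref{thm:intersect2} with $m=1$, which only requires $1<s<n$ and hence works for every $n\ge2$ as soon as $\hdim A>1$. One gets a point $x\in A$ such that $\hdim\bigl(A\cap(W+x)\bigr)=s-1>0$ for $\gamma_{n,n-1}$-almost every hyperplane $W$. Now take \emph{any} other point $y\in A$; the hyperplanes forming an angle at least $90^\circ-\delta$ with $x-y$ have positive $\gamma_{n,n-1}$-measure, so one of them also meets $A$ in a set of positive dimension, yielding $z\ne x$ with $\angle yxz\in[90^\circ-\delta,90^\circ+\delta]$. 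Note the asymmetry of this argument (one ``generic'' point $x$ paired with an arbitrary second point $y$, and the $90^\circ$-cone of hyperplanes around a single direction), which is what lets codimension-one slicing suffice; your scheme instead needs two generic directions from the same point, which is exactly what is unavailable in high ambient dimension. Your planar case ($n=2$) is fine and essentially reproduces Theorem~\ref{thm:bigsets_in_the_plane}, but the theorem is really about large $n$, and there the proposal has a genuine gap.
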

\begin{proof}
By the equivalent definition (\ref{eq:compactch}) of $\Ch$ we found
in the introduction
we can assume that $A$ is compact and 
$0<\Hd^s(A)<\infty$ for some $s>1$.
%(see the comments after Notation \ref{not:Hausdorff}).
Applying Theorem \ref{thm:intersect2} for $m=1$
we obtain that for $\Hd^s$ almost all $x \in A$
the set $A\cap(W+x)$ has positive dimension
for $\gamma_{n,n-1}$ almost all $W \in G(n,n-1)$.
Let us fix a point $x$ with this property
and let $y \ne x$ be an arbitrary point in $A$.

%For any $\delta > 0$ it holds that
%$\gamma_{n, n-1}(\{W \in G(n,n-1): \angle(n_W, xy)<\delta\})>0$
%where $n_W$ denotes the normal vector of $W$. It follows that
%$$\gamma_{n, n-1} \left( \{W\in G(n,n-1): \hdim(A\cap(W+x))>0
% \mbox{ and } \angle(n_W, xy)<\delta\} \right)>0 ,$$
%which clearly implies the statement.

Since for any fixed $\delta>0$
the subspaces forming an angle at least $90^\circ - \delta$ with $x-y$
have positive measure, and the exceptional set in
Theorem~\ref{thm:intersect2} is of measure zero, the theorem follows.
\end{proof}

Now we prove the same result for upper
Minkowski dimension instead of Hausdorff dimension.
It is well-known that the upper Minkowski dimension
is always greater or equal than the Hausdorff dimension.
Hence the following theorem is stronger than the previous one.
\begin{thm} \label{thm:90_mink}
Any bounded set $A$ in $\R^n$ $(n\ge 2)$
with upper Minkowski dimension greater than 1 contains
angles arbitrarily close to the right angle.
\end{thm}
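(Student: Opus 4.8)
The plan is to mimic the proof of Theorem~\ref{Hausdorff1}, but replacing the measure-theoretic slicing argument (Theorem~\ref{thm:intersect2}), which is specific to Hausdorff dimension, by an elementary counting argument that works for upper Minkowski dimension. First I would recall that if $A\su\R^n$ is bounded with $\mdim(A)=s>1$, then for a sequence of scales $r_k\to 0$ the number $N(r_k)$ of $r_k$-mesh cubes meeting $A$ satisfies $N(r_k)\ge r_k^{-s'}$ for some fixed $s'$ with $1<s'<s$ (this is immediate from the definition of upper box dimension). Fix such a scale $r=r_k$, small, and let $\mathcal{Q}$ be the collection of $r$-cubes meeting $A$, so $|\mathcal{Q}|=N\ge r^{-s'}$; pick one point of $A$ in each such cube to get a finite set $F=\{p_1,\dots,p_N\}\su A$ with $|p_i-p_j|\ge$ (essentially) nothing from below but with the property that $F$ is $r$-dense in $A$ and the $p_i$ lie in a bounded region, say a ball of radius $R$.

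Next I would extract from $F$ an approximate right angle. The idea: fix one point $p=p_1\in F$ and look at the directions $u_i=(p_i-p)/|p_i-p|\in S^{n-1}$ of the other points. If \emph{no} two of these directions were almost orthogonal — say every pair had $|\langle u_i,u_j\rangle|>\eta$ — then, as in the proof of Theorem~\ref{rectangle}, the set $\{u_i\}$ would be confined to a ``narrow'' region of the sphere, and consequently the orthogonal projection of $F\setminus\{p\}$ onto the line through $p$ in a suitable direction $v$ would be bi-Lipschitz onto its image with constants depending only on $\eta$ and $n$. A set that is bi-Lipschitz-equivalent to a subset of $\R$ and $r$-dense in a set of large box dimension cannot be too big: its image would be an $r'$-separated (with $r'\sim r$, up to the Lipschitz constant) subset of an interval of length $O(R)$, giving $|F|=O(R/r)$ points, i.e.\ $N=O(1/r)$. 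But $N\ge r^{-s'}$ with $s'>1$, so for $r$ small enough this is a contradiction. Hence there exist $i\ne j$ with $|\langle u_i,u_j\rangle|\le\eta$, i.e.\ the angle $\angle(p_ip p_j)$ lies within $\arccos\eta$-close to $90^\circ$. Letting $r\to 0$ we may take $\eta\to 0$, producing angles in $A$ arbitrarily close to the right angle.

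The step I expect to require the most care is quantifying the ``narrow region'' dichotomy so that the bi-Lipschitz projection estimate is genuinely effective: one needs that if all pairwise inner products of unit vectors $u_1,\dots,u_N$ avoid a fixed neighborhood of $0$, then there is a fixed direction $v$ with $|\langle u_i,v\rangle|\ge c(\eta,n)>0$ for all $i$, so that projection onto $v$ contracts distances between the $p_i$ by no more than $c(\eta,n)$. This is a compactness statement on $S^{n-1}$ essentially identical to the one used inside the proof of Theorem~\ref{rectangle} (``if no direction in $\mathrm{Range}(D)$ is perpendicular to $v$, then $p$ is bi-Lipschitz''), but here it must be made uniform in $N$ and $r$, which is why I phrase it purely in terms of the finite direction set and a fixed gap $\eta$. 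One subtlety: the chosen representative $p_i$ in a cube could coincide with or be very close to $p$; this is harmless because we only need the \emph{directions}, and we may discard the $O(1)$ points within distance $2r$ of $p$ without affecting the count $N\ge r^{-s'}$ for small $r$. Everything else — relating $\mdim$ to the cube count, the geometry of projections, and letting the parameters tend to their limits — is routine. Once this is in place, the same remark as before (Corollary~\ref{cor:mink=haus}) shows the Hausdorff and Minkowski versions are in fact equivalent here.
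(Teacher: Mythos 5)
The central step of your outline is false as stated. The auxiliary claim you isolate as the delicate point --- that if all pairwise inner products $|\langle u_i,u_j\rangle|$ exceed $\eta$ then some $v$ satisfies $|\langle u_i,v\rangle|\ge c(\eta,n)$ for all $i$ --- is actually trivially true (take $v=u_1$, $c=\eta$), but it does not give you a bi-Lipschitz projection of $F\setminus\{p\}$. Bi-Lipschitzness requires a lower bound on $|\langle p_i-p_j,v\rangle|/\|p_i-p_j\|$ for \emph{all} pairs $i\ne j$, and the directions $(p_i-p_j)/\|p_i-p_j\|$ are not among the $u_i$ and are not controlled by your hypothesis (in Theorem~\ref{rectangle} the transversality hypothesis is imposed on \emph{all} directions occurring in $K$ against a fixed $v$, not on pairwise angles of directions emanating from one point, so the analogy is misapplied). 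Concretely, take $p=0$ in $\R^3$ and $p_i=(1,y_i,z_i)$, where $(y_i,z_i)$ runs over an $r$-separated $r$-net of a compact set $C\su[0,\eps]^2$ with $\mdim(C)=3/2$. Then every $\langle u_i,u_j\rangle\ge 1-2\eps^2$, so your hypothesis at the vertex $p$ holds for any $\eta<1$ and there is no angle near $90^\circ$ with vertex at $p$; yet $p_i-p_j=(0,y_i-y_j,z_i-z_j)$, so a projection onto a line that is $c$-bi-Lipschitz on $F\setminus\{p\}$ would map the $r$-separated set $\{(y_i,z_i)\}$ bi-Lipschitzly into an interval of length $O(\eps)$, forcing $|F|=O(1/r)$ and contradicting $|F|\ge r^{-1.4}$. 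So for small $r$ no such $v$ exists, your dichotomy fails, and no contradiction is reached. The example also shows that fixing the vertex $p$ in advance cannot work: all near-right angles of $\{0\}\cup\{(1,y,z):(y,z)\in C\}$ have their vertex at a point $q$ of the ``slab'', formed by the ray from $q$ back to $0$ and the ray to a nearby slab point --- never at $0$.

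This identifies exactly what is missing and what the paper's proof supplies instead. First, Lemma~\ref{lem:mink} produces a cluster of more than $2^{(k-l)t}$ points whose pairwise distances all lie in $[2^{-k+1},2^{-l+2}]$: a separation lower bound comparable to the cluster's diameter together with a point count exceeding the one-dimensional count $2^{k-l}$ (this is where $t>1$ enters). Second, one projects this cluster onto the line toward a single distant point $P$ and applies the pigeonhole principle: two cluster points $Q_1,Q_2$ have projections closer by a factor $2^{-(k-l)(t-1)}$ than their mutual distance, so $Q_1Q_2$ is nearly perpendicular to $OP$. Third --- and this is the structural point your scheme lacks --- the vertex of the angle is the \emph{a posteriori} point $Q_1$, with one leg going to the nearby $Q_2$ and the other to the faraway $P$, whose direction from $Q_1$ is essentially that of $OP$ because $Q_1$ lies in the tiny cluster around $O$ while $\|O-P\|\ge 1$. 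The two legs live at very different scales, which is what makes a single projection estimate control the angle; keeping both legs at the same vertex $p$ and at comparable scales, as you do, admits the counterexample above.
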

The upper Minkowski dimension can be defined in many different ways,
we will use the following definition (see \cite[Section 5.3]{mattila} for details).
\begin{defin}\label{def:mink}
By $B(x,r)$ we denote the closed ball with center $x\in \R^n$ and radius $r$.
For a non-empty bounded set $A\subset\R^n$
let $P(A, \varepsilon)$ denote the greatest integer $k$ for which
there exist disjoint balls $B(x_i,\varepsilon)$ with $x_i\in A$, $i=1,\dots,k$.
The \emph{upper Minkowski dimension} of $A$ is defined as
$$ \mdim(A)\mathdef\sup\{s:
\limsup_{\varepsilon\rightarrow 0+}P(A,\varepsilon)\varepsilon^s=\infty\} .$$
Note that we get an equivalent definition if
we consider the $\limsup$ for $\varepsilon$'s
only of the form $\varepsilon = 2^{-k}$, $k\in \N$.
\end{defin}

The following technical lemma is needed not only for the proof of 
Theorem~\ref{thm:90_mink} but also for the result
about finding angles near to $60^{\circ}$.
It roughly says that in a set of large upper Minkowski dimension
one can find many points such that
the distance of each pair is more or less the same.

\begin{lemma} \label{lem:mink}
Suppose that $\mdim(A) > t$
for a bounded set $A\subset\R^n$ and a positive real $t$.
Then for infinitely many positive integers $k$
it holds that for any integer $0 < l < k$
there are more than $2^{(k-l)t}$ points in $A$
with the property that the distance of any two of them
is between $2^{-k+1}$ and $2^{-l+2}$.
\end{lemma}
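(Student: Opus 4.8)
The plan is to extract the required points from the definition of upper Minkowski dimension by a pigeonhole/counting argument on dyadic scales. Fix $t < \mdim(A)$. By Definition~\ref{def:mink} (in the equivalent form using $\varepsilon = 2^{-k}$), there are infinitely many $k$ with $P(A, 2^{-k}) \ge 2^{kt}$; indeed since $\limsup P(A,2^{-k})2^{-ks} = \infty$ for some $s > t$, for infinitely many $k$ we have $P(A,2^{-k}) \ge 2^{-ks} \cdot 1 \cdot 2^{-ks}$... more carefully, $P(A,2^{-k})2^{-kt} \to \infty$ along a subsequence, so $P(A,2^{-k}) > 2^{kt}$ for infinitely many $k$. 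Pick such a $k$, and let $x_1,\dots,x_N \in A$ be the centers of $N = P(A,2^{-k}) > 2^{kt}$ disjoint balls of radius $2^{-k}$; in particular any two of these centers are at distance $> 2^{-k+1}$, which is the lower bound we want.

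The issue is the upper bound on pairwise distances: the $x_i$ could be spread out across all of $A$. To fix this, I would cover $A$ by boundedly many sets of small diameter and pigeonhole. Given $0 < l < k$, partition (a bounding cube of) $A$ into a grid of cubes of side roughly $2^{-l}$; the number of such cubes is at most an absolute constant $M_n$ depending only on $n$ and $\diam(A)$, but not on $k$ — say $M$. By pigeonhole, some grid cube contains more than $N/M > 2^{kt}/M$ of the points $x_i$. The diameter of a grid cube of side $2^{-l}$ is $\sqrt{n}\,2^{-l} \le 2^{-l+2}$ for the relevant range of $n$ — here one must be slightly careful, perhaps taking the side length to be $2^{-l}$ and noting $\sqrt n 2^{-l}$ may exceed $2^{-l+2}$ for large $n$; the cleaner route is to use cubes of side $2^{-l}/\sqrt n$, at the cost of a larger constant $M = (\sqrt n\,\diam A\, 2^{l})^n / \dots$, which still does not depend on $k$. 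In any case, the points in the chosen cube are pairwise within distance $2^{-l+2}$ (after adjusting the grid side constant appropriately), giving the upper bound.

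It remains to check the count $2^{(k-l)t}$ rather than just $2^{kt}/M$. Since $M$ does not depend on $k$, and $k$ ranges over an infinite set, we are free to also let $l$ vary and absorb $M$: we need $2^{kt}/M > 2^{(k-l)t}$, i.e. $2^{lt} > M$, i.e. $l > \frac{\log M}{t \log 2}$. So the statement as phrased — "for any integer $0 < l < k$" — is slightly too strong unless $M \le 2^{t}$; the honest reading is that the conclusion holds for all $l$ in a range $l_0 < l < k$, or equivalently one restricts to the infinitely many $k$ large enough that the bound is comfortable. I would therefore state the proof to yield the points for $l$ in the stated range and note that this is what is used downstream (in Theorems~\ref{thm:90_mink} and the $60^\circ$ result, where $k$ is taken large relative to a fixed $l$). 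The main obstacle is purely bookkeeping: choosing the grid side and the constant $M$ so that the diameter bound $2^{-l+2}$ and the count $2^{(k-l)t}$ both come out cleanly, uniformly in $k$; there is no real analytic difficulty, only the need to organize the dyadic arithmetic so that the dimension-independence (no role for $n$ in the final dimension bounds) is respected.
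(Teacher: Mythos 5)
There is a genuine gap, and it sits exactly where you wave it off as ``bookkeeping''. Your covering at scale $2^{-l}$ is by a grid of the ambient bounding cube, so the number of cells is $M\asymp(\diam(A)\,2^{l})^{n}$; this is independent of $k$, as you say, but it grows with $l$ like $2^{ln}$, which you overlook. The pigeonhole then yields a cell containing at least $2^{kt}/M\asymp 2^{kt}2^{-ln}$ of your packed points, whereas the lemma demands $2^{(k-l)t}=2^{kt}2^{-lt}$; since $t<\mdim(A)\le n$, we have $2^{-ln}\ll 2^{-lt}$, so your count falls short for every large $l$ --- and large $l$ is precisely the regime used downstream (Theorem~\ref{thm:tri} takes $l=k-1$, and Theorem~\ref{thm:90_mink} needs both $l$ and $k-l$ large). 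Your proposed repair, restricting to $l>\log M/(t\log 2)$, does not close the gap because $M$ itself depends on $l$ and outgrows $2^{lt}$. Note also that the lemma is not ``slightly too strong'': it is correct as stated, for all $0<l<k$.

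Two ideas are missing. First, the covering at scale $2^{-l}$ must be intrinsic to $A$, not volumetric: take a maximal family of disjoint balls $B(x_i,2^{-l})$ with $x_i\in A$; there are $P(A,2^{-l})$ of them, and by maximality the doubled balls $B(x_i,2^{-l+1})$ cover $A$ (whence the diameter bound $2^{-l+2}$). Second --- and this is the real point --- $k$ must be chosen not merely so that $P(A,2^{-k})2^{-kt}$ is large, but so that it is a running record: setting $r_j=P(A,2^{-j})2^{-jt}$, the hypothesis gives $\limsup_j r_j=\infty$, hence there are infinitely many $k$ with $r_k>r_l$ for \emph{all} $l<k$. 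For such $k$, pigeonholing the $r_k2^{kt}$ packed points of scale $2^{-k}$ over the $P(A,2^{-l})=r_l2^{lt}$ doubled balls produces a ball containing at least $\frac{r_k2^{kt}}{r_l2^{lt}}=\frac{r_k}{r_l}2^{(k-l)t}>2^{(k-l)t}$ of them, which is exactly the claim. Without comparing the packing numbers of $A$ at the two scales via this record selection, no choice of grid constant can rescue the argument.
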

\begin{proof}
Let
$$ r_k=P(A, 2^{-k}) 2^{-kt}. $$
Due to the previous definition $\limsup_{k\rightarrow\infty} r_k = \infty$.
It follows that there are infinitely many values of $k$
such that $r_k > r_l$  for all $l < k$.
Let us fix such a $k$ and let $0<l<k$ be arbitrary.

By the definition of $r_k$, there are
$r_k2^{kt}$ disjoint balls with radii $2^{-k}$ and centers in $A$.
Let $\mathcal{S}$ denote the set of the centers of these balls.
Clearly the distance of any two of them is at least $2^{-k+1}$.

Similarly, we can find a maximal system of disjoint balls
$B(x_i,2^{-l})$ with $x_i\in A$, $i=1,\dots,r_l2^{lt}$.
Consider the balls $B(x_i,2^{-l+1})$ of doubled radii.
These doubled balls are covering the whole $A$
(otherwise the original system would not be maximal).
By the pigeonhole principle,
one of these doubled balls contains at least
$$\frac{r_k2^{kt}}{r_l2^{lt}}=\frac{r_k}{r_l}2^{(k-l)t} > 2^{(k-l)t}$$
points of $\mathcal{S}$. These points clearly have the desired property.
\end{proof}
Now we are in a position to prove the theorem.
\begin{proof}[Proof of Theorem~\ref{thm:90_mink}]
We can assume that $\diam(A)>2$.
Fix a $t$ such that $\mdim(A)>t>1$.

Lemma \ref{lem:mink} tells us that there are arbitrarily large integers $k$
such that for any $l<k$ one can have more than $2^{(k-l)t}$ points in $A$
such that each distance is between $2^{-k+1}$ and $2^{-l+2}$.
Let $\mathcal{S}$ be a set of such points
and pick an arbitrary point $O\in \mathcal{S}$.
Since $\diam(A)>2$, there exists a point $P\in A$ with $OP\ge1$.
Now we project the points of $\mathcal{S}$ to the line $OP$.
There must be two distinct points $Q_1, Q_2 \in \mathcal{S}$
such that the distance of their projection is at most
$$\frac{2^{-l+2}}{2^{(k-l)t}}= 2^{-l+2-(k-l)t} ,$$
It follows that
$$\cos \angle(\overrightarrow{Q_1Q_2},\overrightarrow{OP})\le
\frac{2^{-l+2-(k-l)t}}{2^{-k+1}}=2^{-(k-l)(t-1)+1}.$$
Since $Q_1O \le 2^{-l+2}$ and $OP\ge1$,
the  angle of the lines  $OP$ and $Q_1P$
is at most  $C_1 2^{-l}$ with some constant $C_1$.
Combining the previous results we get that
$$ \left| \angle P Q_1 Q_2 - 90^\circ \right| \le
C_1 2^{-l} + C_2 2^{-(k-l)(t-1)} $$
with some constants $C_1, C_2$.
The right hand side can be arbitrarily small
since $t-1$ is positive and both $l$ and $k-l$ can be chosen to be large.
%(We can similarly get that $\angle P Q_2 Q_1$ is also close to $90^\circ$.
%Note that one of the angles $\angle P Q_1 Q_2$
%and $\angle P Q_2 Q_1$ is less than $90^\circ$.)
\end{proof}

Now we try to find angles close to $60^\circ$.
We will do that by finding three points forming an \emph{almost regular} triangle
provided that the dimension of the set is sufficiently large.

We will need a simple result from Ramsey theory.
Let $R_r(3)$ denote the least positive integer $k$
for which it holds that no matter how we color
the edges of a complete graph on $k$ vertices with $r$ colors
it contains a monochromatic triangle.
The next inequality can be obtained easily:
$$ R_r(3) \leq r \cdot R_{r-1}(3) - (r-2) .$$
(A more general form of the above inequality
can be found in e.g.~\cite[p.~90, Eq.~2]{ramsey}.)
It readily implies the following upper bound for $R_r(3)$.
\begin{lemma} \label{lem:ramsey}
For any positive integer $r \geq 2$
$$ R_r(3) \leq 3r! ,$$
that is, any complete graph on at least $3r!$ vertices
edge-colored by $r$ colors contains a monochromatic triangle.
\end{lemma}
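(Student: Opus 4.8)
The plan is to prove the recursion $R_r(3) \le r \cdot R_{r-1}(3) - (r-2)$ first and then iterate it to get the explicit bound $R_r(3) \le 3r!$.

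For the recursion, fix $r$ colors and a complete graph on $k = r \cdot R_{r-1}(3) - (r-2)$ vertices whose edges are colored with these colors; I want to produce a monochromatic triangle. Pick any vertex $v$. It has $k-1 = r \cdot R_{r-1}(3) - (r-1) = r(R_{r-1}(3) - 1) + 1$ neighbors (all other vertices), and each incident edge has one of $r$ colors. By the pigeonhole principle one color class, say color $1$, is used on at least $\lceil (k-1)/r \rceil = R_{r-1}(3)$ of these edges; let $W$ be the set of the corresponding $R_{r-1}(3)$ endpoints. If any edge inside $W$ has color $1$, then together with $v$ it forms a monochromatic triangle of color $1$ and we are done. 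Otherwise, all edges inside $W$ use only the remaining $r-1$ colors, and since $|W| \ge R_{r-1}(3)$, by definition of $R_{r-1}(3)$ there is a monochromatic triangle inside $W$. This proves $R_r(3) \le r\cdot R_{r-1}(3) - (r-2)$. (I should double-check the ceiling computation: $k-1 = r(R_{r-1}(3)-1)+1$, so $(k-1)/r = R_{r-1}(3) - 1 + 1/r$, whose ceiling is indeed $R_{r-1}(3)$.)

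Now I iterate, starting from the base case $R_2(3) = 6 \le 3 \cdot 2! = 6$ (the classical fact that $K_6$ two-colored contains a monochromatic triangle, and $K_5$ need not). Suppose inductively $R_{r-1}(3) \le 3(r-1)!$. Then
\[
R_r(3) \le r\cdot R_{r-1}(3) - (r-2) \le r \cdot 3(r-1)! - (r-2) = 3r! - (r-2) \le 3r!,
\]
since $r \ge 2$ gives $r - 2 \ge 0$. This closes the induction and yields $R_r(3) \le 3r!$ for all $r \ge 2$. The reformulation "any complete graph on at least $3r!$ vertices edge-colored by $r$ colors contains a monochromatic triangle" is then immediate, since adding vertices can only make monochromatic triangles easier to find (restrict to any $3r!$ of the vertices).

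The only mild obstacle is bookkeeping with the off-by-one terms: one must be careful that the $-(r-2)$ correction in the recursion is exactly what is needed so that the telescoping in the induction leaves $3r! - (r-2) \le 3r!$ rather than overshooting, and that the pigeonhole step produces $R_{r-1}(3)$ vertices (not $R_{r-1}(3)-1$), which is why the $+1$ in $k = r\cdot R_{r-1}(3)-(r-2)$ matters. Since the paper only needs the clean bound $3r!$, there is no need to optimize further.
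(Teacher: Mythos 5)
Your proof is correct and follows exactly the route the paper intends: establish the recursion $R_r(3)\le r\cdot R_{r-1}(3)-(r-2)$ (which the paper simply cites from Graham--Rothschild--Spencer) and iterate from $R_2(3)=6$ to get $R_r(3)\le 3r!-(r-2)\le 3r!$. The only difference is that you supply the standard pigeonhole proof of the recursion, which the paper leaves to the reference; all your bookkeeping (the ceiling computation and the monotonicity remark at the end) checks out.
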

Using this lemma we can prove the following theorem.
\begin{thm}\label{thm:tri}
There exists an absolute constant $C$ such that
whenever $\mdim(A)> \frac{C}{\delta}\log(\frac{1}{\delta})$
for some bounded set $A\subset\R^n$ and $\delta >0$ the following holds:
$A$ contains three points that form a $\delta$-almost regular triangle,
that is, the ratio of the length of the longest and shortest sides
is at most $1 + \delta$.
\end{thm}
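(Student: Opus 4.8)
The plan is to combine the "many points at roughly equal distances" mechanism of Lemma~\ref{lem:mink} with the Ramsey-theoretic pigeonhole of Lemma~\ref{lem:ramsey}. First I would fix $t$ with $\mdim(A)>t>\frac{C}{\delta}\log(\frac{1}{\delta})$, and apply Lemma~\ref{lem:mink} to obtain, for suitable large $k$ and a cleverly chosen $l=l(k,\delta)$, a set $\mathcal{S}$ of more than $2^{(k-l)t}$ points in $A$ all of whose pairwise distances lie in the dyadic window $[2^{-k+1},2^{-l+2}]$. The idea is that this window has multiplicative width $2^{k-l+1}$, which is only a constant factor in exponent; the real gain is that we can \emph{refine} the window by bucketing.

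Next I would partition the interval of possible distances $[2^{-k+1},2^{-l+2}]$ into $r$ multiplicative sub-buckets, each of ratio $(1+\delta/3)$ say, so that $r \approx \frac{\log(2^{k-l+3})}{\log(1+\delta/3)} \approx \frac{(k-l)\log 2}{\delta/3}$, i.e. $r = O\!\left(\frac{k-l}{\delta}\right)$. Color each edge $\{Q_i,Q_j\}$ of the complete graph on $\mathcal{S}$ by the index of the bucket containing $|Q_iQ_j|$. By Lemma~\ref{lem:ramsey}, a monochromatic triangle exists as soon as $|\mathcal{S}| \ge 3\,r!$. Since $|\mathcal{S}| > 2^{(k-l)t}$ and $r! \le r^r \le \left(\frac{Ck-l}{\delta}\right)^{Cr}$, taking logarithms the condition $2^{(k-l)t} \ge 3r!$ becomes roughly $(k-l)t \gtrsim r\log r \approx \frac{k-l}{\delta}\log\!\frac{k-l}{\delta}$, which (after absorbing the $\log(k-l)$ against choosing $k-l$ not too large relative to $1/\delta$, e.g. $k-l$ a fixed multiple of $1/\delta$) reduces to exactly $t \ge \frac{C}{\delta}\log\frac{1}{\delta}$ for an absolute constant $C$. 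A monochromatic triangle then has all three side lengths within a factor $(1+\delta/3)$ of each other, hence the ratio of longest to shortest side is at most $(1+\delta/3)^2 \le 1+\delta$, so it is a $\delta$-almost regular triangle in $A$.

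The main obstacle I anticipate is the bookkeeping in balancing the two competing exponential quantities: the number of points grows like $2^{(k-l)t}$, while the Ramsey threshold $3r!$ grows super-exponentially in $r$, and $r$ itself is proportional to $k-l$. Choosing $k-l$ too large makes $r!$ blow up faster than $2^{(k-l)t}$; choosing it too small makes the bucket width coarser than $\delta$. The right move is to set $k-l$ to be a suitable \emph{constant} times $\lceil 1/\delta\rceil$ (large enough that the window can be cut into buckets of ratio $1+\delta/3$, small enough to control $r!$), so that $r = O(1/\delta)$, $\log r = O(\log(1/\delta))$, and the inequality $(k-l)t \ge \log(3\,r!) = O\!\left(\frac{1}{\delta}\log\frac{1}{\delta}\right)$ is satisfied precisely when $t$ exceeds a constant multiple of $\frac{1}{\delta}\log\frac{1}{\delta}$. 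One must also check that infinitely many admissible $k$ exist (this is given by Lemma~\ref{lem:mink}) and that the degenerate/collinear cases cause no trouble, but the triangle inequality within a narrow multiplicative bucket makes the triangle automatically non-degenerate. The rest is a routine, if slightly fiddly, estimate.
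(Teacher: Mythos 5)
Your proposal is correct and follows essentially the same route as the paper: Lemma~\ref{lem:mink} to extract many points with pairwise distances in a controlled window, a coloring of distances into $O(1/\delta)$ buckets, and Lemma~\ref{lem:ramsey} to extract a monochromatic (hence $\delta$-almost regular) triangle. The paper simply takes $l=k-1$, so the window $[2^{-k+1},2^{-k+3}]$ already has bounded multiplicative width and the balancing act you describe disappears; your choice $k-l\asymp 1/\delta$ also works (though it forces $r=O(1/\delta^2)$ buckets rather than $O(1/\delta)$, the extra logarithm is absorbed into the constant $C$).
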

As an immediate consequence, we can find angles close to $60^\circ$.
\begin{cor} \label{cor:60}
Suppose that $\mdim(A)> \frac{C}{\delta}\log(\frac{1}{\delta})$
for some bounded set $A\subset\R^n$ and $\delta >0$.
Then $A$ contains angles from the interval $(60^\circ - \delta, 60^\circ]$
and also from $[60^\circ, 60^\circ+\delta)$.
Therefore $\Ch(60^\circ,\de)\le \frac{C}{\delta}\log(\frac{1}{\delta})$.
\end{cor}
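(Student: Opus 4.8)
The plan is to deduce Corollary~\ref{cor:60} directly from Theorem~\ref{thm:tri}. Given a bounded set $A$ with $\mdim(A)> \frac{C}{\delta}\log(\frac{1}{\delta})$, Theorem~\ref{thm:tri} hands us three points $x,y,z\in A$ forming a $\delta'$-almost regular triangle, where I would apply the theorem not with $\delta$ itself but with a suitably smaller $\delta' = c\delta$ (or some comparable quantity), adjusting the absolute constant $C$ accordingly; this is harmless since $\frac{1}{\delta'}\log\frac{1}{\delta'}$ is still $O(\frac{1}{\delta}\log\frac{1}{\delta})$. So the real content is purely elementary planar trigonometry: in a triangle whose side lengths satisfy $\text{(longest)}/\text{(shortest)}\le 1+\delta'$, every angle is close to $60^\circ$.

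The key step is to make that last implication quantitative and, crucially, to get angles on \emph{both} sides of $60^\circ$. For the quantitative bound I would argue as follows: normalize so the shortest side has length $1$, so all three sides lie in $[1,1+\delta']$. By the law of cosines, for the angle $\gamma$ opposite a side $c$ with the other two sides $a,b$, we have $\cos\gamma = \frac{a^2+b^2-c^2}{2ab}$; plugging in $a,b,c\in[1,1+\delta']$ shows $\cos\gamma = \frac12 + O(\delta')$, hence $|\gamma - 60^\circ| = O(\delta')$ uniformly. Choosing the constant in $\delta'=c\delta$ small enough makes this $O(\delta')$ term smaller than $\delta$, so all three angles lie in $(60^\circ-\delta,60^\circ+\delta)$. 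To land an angle in $[60^\circ,60^\circ+\delta)$ and one in $(60^\circ-\delta,60^\circ]$ within the \emph{same} triangle, I use the fact that the three angles of any triangle sum to $180^\circ$: their average is exactly $60^\circ$, so the largest angle is $\ge 60^\circ$ and the smallest is $\le 60^\circ$. Combined with the bound $|\gamma_i-60^\circ|<\delta$ for each, the largest angle lies in $[60^\circ,60^\circ+\delta)$ and the smallest in $(60^\circ-\delta,60^\circ]$, giving both required angles. (One should note the triangle is non-degenerate since all side lengths are within a factor $1+\delta'$ of each other, so these are genuine angles formed by three distinct points of $A$.)

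The final sentence, $\Ch(60^\circ,\delta)\le \frac{C}{\delta}\log(\frac{1}{\delta})$, then follows from the definition of $\Ch$ together with the characterization~\eqref{eq:compactch} and the fact (Corollary~\ref{cor:mink=haus}, or more simply that upper Minkowski dimension dominates Hausdorff dimension) that a set avoiding the interval $(60^\circ-\delta,60^\circ+\delta)$ must have upper Minkowski dimension at most $\frac{C}{\delta}\log(\frac{1}{\delta})$: if its dimension exceeded that bound, the first part of the corollary would produce a forbidden angle. I do not anticipate a genuine obstacle here; the only mild subtlety is bookkeeping the absolute constants so that passing from $\delta'$ back to $\delta$ does not change the \emph{form} $\frac{C}{\delta}\log\frac{1}{\delta}$ of the bound, and making sure the law-of-cosines estimate is stated cleanly enough that the "$O(\delta')$" is an honest explicit bound rather than hand-waving.
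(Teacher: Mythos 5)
Your proposal is correct and follows the route the paper intends: the corollary is stated there as an immediate consequence of Theorem~\ref{thm:tri}, and your law-of-cosines estimate plus the observation that the three angles average to exactly $60^\circ$ (so the largest is $\ge 60^\circ$ and the smallest $\le 60^\circ$) is precisely the missing elementary step, with the constant adjustment $\delta'=c\delta$ correctly absorbed into $C$.
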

\begin{rem}\label{rem:sharp}
The above theorem and even the corollary is essentially sharp:
the first author (Harangi \cite{harangi}) constructed a set with Hausdorff dimension
$\frac{c}{\delta}/\log(\frac{1}{\delta})$
and without any angles from the interval
$(60^\circ-\delta, 60^\circ+\delta)$, so
we have $\Ch(60^\circ,\de)\ge \frac{c}{\delta}/\log(\frac{1}{\delta})$.
\end{rem}
\begin{proof}[Proof of Theorem \ref{thm:tri}]
Let $t=\frac{C}{\delta}\log(\frac{1}{\delta})$ and
apply Lemma \ref{lem:mink} for $l=k-1$.
We obtain at least $2^{t}$ points in $A$ such that
each distance is in the interval $[2^{-k+1},2^{-k+3}]$.
Let $a=2^{-k+1}$ and divide $[a,4a]$ into
$N=\lceil\frac{3}{\delta}\rceil$ disjoint intervals of length at most $\delta a$.
Regard the points of $A$ as the vertices of a graph.
Color the edges of this graph with $N$ colors according to
which interval contains the distance of the corresponding points.

Easy computation shows that $2^t>3N!$ (with a suitable choice of $C$).
Therefore the above graph contains
a monochromatic triangle by Lemma \ref{lem:ramsey}.
It easily follows that the three corresponding points
form a $\delta$-almost regular triangle in $\R^n$.
\end {proof}
\begin{rem}
The same proof yields the following:
for any positive integer $d$ and positive real $\delta$
there is a number $K(d, \delta)$ such that
whenever $\mdim(A)>K(d, \delta)$ for some bounded set $A$,
one can find $d$ points in $A$ with the property that
the ratio of the largest and the smallest distance
among these points is at most $1+\delta$.
(One needs to use the fact that the Ramsey number $R_r(d)$ is finite.)
\end{rem}
In order to derive similar results for $120^\circ$ instead of $60^\circ$
we show that if large Hausdorff dimension implies the existence of an angle near $\alpha$,
then it also implies the existence of an angle near $180^\circ-\alpha$.
\begin{prop} \label{prop:suppl_angles}
Suppose that $s=s(\alpha, \delta, n)$ is a positive real number such that
any analytic set $A\subset\R^n$ with $\Hd^s(A)>0$ contains an angle
from the interval $(\alpha-\delta, \alpha+\delta)$.
Then any analytic set $B\subset\R^n$ with $ \Hd^s(B)>0$
contains an angle from the interval $(180^\circ-\alpha-\delta',180^\circ-\alpha+\delta')$
for any $\de' > \de$.
\end{prop}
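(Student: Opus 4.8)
The strategy is to reduce the statement to producing, inside $B$, a \emph{pinched} $\alpha$-triangle: three distinct points $a,b,c$ with $\angle bac\in(\alpha-\delta,\alpha+\delta)$ and with $|ab|$ much shorter than $|ac|$. Suppose indeed that $\angle bac=\beta\in(\alpha-\delta,\alpha+\delta)$ and $|ab|\le\eta|ac|$ with $\eta\le 1/2$. Then $|bc|\ge|ac|-|ab|\ge|ac|/2$, so by the law of sines $\sin\angle acb=|ab|\sin\beta/|bc|\le 2\eta$; since $ab$ is a shortest side of the triangle, $\angle acb$ is acute, hence $\angle acb\le C\eta$ for an absolute constant $C$, and so $\angle abc=180^\circ-\beta-\angle acb$ differs from $180^\circ-\beta$ by at most $C\eta$. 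Thus, fixing $\eta$ so small that $C\eta<\delta'-\delta$, the angle $\angle abc$ lies in $(180^\circ-\alpha-\delta',180^\circ-\alpha+\delta')$, and $a,b,c\in B$, as required.

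To build such a triple I would use a condensation point together with a fixed far point. By the reductions of the introduction we may assume $B$ is a compact $s$-set. Let $E\subseteq B$ be the set of condensation points of $B$ (those $x$ with $\Hd^s(B\cap B(x,r))>0$ for every $r>0$); $E$ is closed and $\Hd^s(B\setminus E)=0$, so $\Hd^s(E\cap B(x,r))=\Hd^s(B\cap B(x,r))>0$ for every $x\in E$ and $r>0$. Fix $y_0\in E$ and $P\in B\setminus\{y_0\}$. For small $\rho>0$ the set $A:=(E\cap B(y_0,\rho))\cup\{P\}$ is compact with $\Hd^s(A)>0$, so by hypothesis it contains an angle $\beta\in(\alpha-\delta,\alpha+\delta)$, say with vertex $v$ and legs to $w_1,w_2$. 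Three cases. If $v=P$, both legs run from $P$ into the tiny ball $B(y_0,\rho)$, so $\beta=O(\rho/|y_0-P|)<\alpha-\delta$ once $\rho$ is small --- impossible; here we use $\alpha-\delta>0$, which is the situation in the intended application $\alpha=60^\circ$. If $v\in E\cap B(y_0,\rho)$ and some $w_i=P$, then $|vw_j|\le 2\rho$ while $|vP|\ge|y_0-P|-\rho$, so for $\rho$ small $\{v,w_1,w_2\}$ is a pinched $\alpha$-triangle in $B$ and we are done. Otherwise $v,w_1,w_2\in E\cap B(y_0,\rho)$, so we have found an $\alpha$-near angle in $B$ with vertex $v\in E$, at scale $\le 2\rho$; we then repeat the construction with $v$ in place of $y_0$, the same point $P$ (still far from the new, smaller ball), and a new radius $\rho_1$ chosen much smaller than $\min(|vw_1|,|vw_2|)$ and than $\rho$.

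Iterating, either the middle case occurs at some stage, and we win, or we get an infinite nested sequence of condensation points $a_j\to a_\infty\in E$ carrying $\alpha$-near angles whose scales tend to $0$. Handling this last alternative is the step I expect to be the main obstacle. The plan is a blow-up/compactness argument at $a_\infty$: rescale the $\alpha$-near configuration at $a_j$ to unit size and pass to a subsequential limit to obtain a genuine angle equal to some $\beta_0\in[\alpha-\delta,\alpha+\delta]$ in a tangent set of $B$ at $a_\infty$, then exploit the condensation of $B$ at $a_\infty$ to realise, inside $B$, a triple with one leg arbitrarily short relative to the other and with angle within $\delta'-\delta$ of a true $\alpha$-near angle --- which is precisely where the margin $\delta'>\delta$ is consumed. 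The delicate points are ensuring the limiting configuration is non-degenerate and that the long leg of the resulting pinched triple lies in $B$ itself and not merely in an auxiliary set fed to the hypothesis; the case analysis above is routine by comparison.
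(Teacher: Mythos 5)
Your reduction to a ``pinched'' near-$\alpha$ triangle is sound and is in fact the same geometric mechanism the paper uses: if $\angle bac\in(\alpha-\delta,\alpha+\delta)$ with $|ab|\ll|ac|$ and $a,b,c\in B$, then $\angle abc$ lands in the supplementary interval, and the margin $\delta'>\delta$ absorbs the error; your cases (1) and (2) are also fine. The problem is that your case (3) is not an exceptional alternative but the generic one: adding the single point $P$ does not change $\Hd^s\bigl((E\cap B(y_0,\rho))\cup\{P\}\bigr)$, so the hypothesis already produces a near-$\alpha$ angle entirely inside the small ball, and nothing ever forces a leg to reach $P$. Thus the whole content of the proposition sits in the infinite-recursion branch, which you only sketch. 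The blow-up plan does not close the gap: a tangent/limit configuration at $a_\infty$ consists of limits of rescaled points of $B$, not of points of $B$, and --- as you yourself note --- you still have to produce an \emph{actual} long leg inside $B$ whose direction from the vertex of some small-scale configuration is within $O(\delta'-\delta)$ of the reverse of one of that configuration's legs. A far point fixed in advance gives no control over this direction, and condensation of $B$ at $a_\infty$ supplies short legs, not long ones.

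The missing idea is a pigeonhole on directions across the infinitely many nested scales. In the paper's argument one records, at each stage $m$, the unit vector $\overrightarrow{Q_mP_m}$ from the vertex of the stage-$m$ configuration to one of its leg endpoints, and chooses the stage-$(m+1)$ configuration inside a ball around $P_m$ of radius $\varepsilon\cdot\min(Q_mP_m,Q_mR_m)$. By compactness of $S^{n-1}$, two of these directions, say at stages $l<k$, are $\varepsilon$-close. Since $Q_k$ lies deep inside the ball around $P_l$, the direction $\overrightarrow{Q_kQ_l}$ is within $O(\varepsilon)$ of $-\overrightarrow{Q_lP_l}$, hence within $O(\varepsilon)$ of $-\overrightarrow{Q_kP_k}$; so $Q_l$ is exactly the long leg you need, pointing (up to $O(\varepsilon)$) opposite to the short leg $Q_kP_k$, and $\angle Q_lQ_kR_k$ lies in $(180^\circ-\alpha-\delta',180^\circ-\alpha+\delta')$. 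In short: the ``far point'' must be harvested from an earlier stage of the recursion, selected by pigeonhole so that its direction is correct, rather than fixed beforehand. Until the recursive case is handled by some such device, your proof is incomplete.
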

\begin{proof}
Again, we can assume that $0<\Hd^s(B)<\infty$.
It is well-known that for $\Hd^s$ almost all $x\in B$
the set $B \cap B(x,r)$ has positive $\Hd^s$ measure for any $r>0$
\cite[Theorem 6.2]{mattila}.
If we omit the exceptional points from $B$,
this will be true for every point of the obtained set.
Assume that $B$ had this property in the first place.
Then, by the assumptions of the proposition,
any ball around any point of $B$
contains an angle from the $\delta$-neighborhood of $\alpha$.

We define the points $P_m, Q_m, R_m \in B$ recursively
in the following way. Fix a small $\varepsilon$.
First take  $P_0, Q_0, R_0$ such that the angle $\angle P_0Q_0R_0$
falls into the interval $(\alpha-\delta, \alpha+\delta)$.
If the points $P_m, Q_m, R_m$ are given, then choose points
$P_{m+1},Q_{m+1}, R_{m+1}$ from the
$\varepsilon\cdot\min(Q_mP_m,Q_mR_m)$-neighborhood of $P_m$ such that
$\angle P_{m+1}Q_{m+1}R_{m+1} \in (\alpha-\delta, \alpha+\delta)$.

We can find two indices $k > l$ such that the
angle enclosed by the vectors $\overrightarrow{Q_lP_l}$ and
$\overrightarrow{Q_kP_k}$ is less than $\varepsilon$.
It is clear that if we choose $\varepsilon$ sufficiently small, then
$\angle(Q_l, Q_k,R_k)\in(180^\circ-\alpha-\delta', 180^\circ-\alpha+\delta')$.
\end{proof}
\begin{rem}\label{rem:closed}
Proposition~\ref{prop:suppl_angles} holds for $\delta'=\delta$ as well.
Surprisingly, it even holds for some $\delta'<\delta$.
The reason behind is the following.
If every analytic set $A\subset \R^n$ with $\Hd^s(A)>0$ contains an angle
from the interval $(\alpha-\delta, \alpha+\delta)$, then there necessarily exists a closed subinterval
$[\alpha-\gamma, \alpha+\gamma]$ ($\gamma<\delta$) such that
every analytic set $A\subset \R^n$ with $\Hd^s(A)>0$ contains an angle
from the interval $[\alpha-\gamma, \alpha+\gamma]$. We prove this statement in the Appendix (Theorem~\ref{thm:closed}).

This implies that $\Ch$ 
%(see \eqref{eq:ch} in the introduction) 
satisfies the symmetry property
$$\Ch(\alpha, \delta)=\Ch(180^\circ-\alpha, \delta).$$
\end{rem}

\begin{thm}\label{thm:120}
There exists an absolute constant $C$ such that
any analytic set $A\subset\R^n$ with
$\hdim(A) > \frac{C}{\delta}\log(\frac{1}{\delta})$
contains an angle from the $\delta$-neighborhood of $120^\circ$.
Therefore $\Ch(120^\circ,\de)\le \frac{C}{\delta}\log(\frac{1}{\delta})$.
\end{thm}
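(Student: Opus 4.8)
The plan is to combine Corollary~\ref{cor:60} with Proposition~\ref{prop:suppl_angles}. Corollary~\ref{cor:60} guarantees angles arbitrarily close to $60^\circ$ in any bounded set of sufficiently large upper Minkowski dimension, and Proposition~\ref{prop:suppl_angles} converts a guarantee of angles near an angle $\alpha$ into a guarantee of angles near the supplementary angle $180^\circ-\alpha$; since $180^\circ-60^\circ=120^\circ$, chaining these two statements yields exactly Theorem~\ref{thm:120}. The only thing that needs a little care is that Corollary~\ref{cor:60} is phrased for the upper Minkowski dimension of a bounded set, whereas Proposition~\ref{prop:suppl_angles} wants a threshold $s$ with the property that \emph{every} analytic set $E\subset\R^n$ with $\Hd^s(E)>0$ contains an angle near $60^\circ$. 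Bridging this gap uses only the standard facts already quoted in the paper: that $\mdim\ge\hdim$ for bounded sets, and that an analytic set of positive $\Hd^s$ measure contains a compact subset $K$ with $0<\Hd^s(K)<\infty$.

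Carrying this out, I would fix $\delta>0$ (which we may take to be small) and let $C_0$ denote the absolute constant provided by Corollary~\ref{cor:60}. Set $s\mathdef\frac{2C_0}{\delta}\log\frac{2}{\delta}+1$. First I would check that every analytic set $E\subset\R^n$ with $\Hd^s(E)>0$ contains an angle in $(60^\circ-\tfrac{\delta}{2},\,60^\circ+\tfrac{\delta}{2})$: choose a compact $K\subseteq E$ with $0<\Hd^s(K)<\infty$; then $K$ is bounded and $\mdim(K)\ge\hdim(K)=s>\frac{C_0}{\delta/2}\log\frac{2}{\delta}$, so Corollary~\ref{cor:60} applied with parameter $\delta/2$ produces such an angle inside $K$, hence inside $E$. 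Next I would feed this into Proposition~\ref{prop:suppl_angles} with $\alpha=60^\circ$ and inner parameter $\delta/2$: its hypothesis is now met by this $s$, so the proposition gives that every analytic set $B\subset\R^n$ with $\Hd^s(B)>0$ contains an angle in $(120^\circ-\delta',120^\circ+\delta')$ for every $\delta'>\delta/2$, and in particular, taking $\delta'=\delta$, an angle in the $\delta$-neighborhood of $120^\circ$. Finally, if $A\subset\R^n$ is analytic with $\hdim(A)>s$ then $\Hd^s(A)=\infty>0$, so $A$ contains such an angle; and since $s=\frac{2C_0}{\delta}\log\frac{2}{\delta}+1\le\frac{C}{\delta}\log\frac{1}{\delta}$ for a suitable absolute constant $C$ and all small $\delta$, this is the asserted bound on $\hdim(A)$. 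The stated inequality $\Ch(120^\circ,\delta)\le\frac{C}{\delta}\log\frac{1}{\delta}$ then follows at once from~\eqref{eq:compactch}.

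I do not expect a genuine obstacle: the real work is already contained in Corollary~\ref{cor:60} (the Ramsey-theoretic argument on almost-equidistant configurations, via Lemmas~\ref{lem:mink} and~\ref{lem:ramsey}) and in Proposition~\ref{prop:suppl_angles} (the recursive ``zooming in'' that manufactures the supplementary angle). What one has to be careful about is using the inequality $\mdim\ge\hdim$ in the correct direction, and choosing the two $\delta$-scales so that the strict requirement $\delta'>\delta/2$ in Proposition~\ref{prop:suppl_angles} still leaves room to land inside the closed $\delta$-neighborhood of $120^\circ$; all of this is absorbed into the final absolute constant $C$. One could instead invoke Remark~\ref{rem:closed} to take $\delta'$ equal to the inner parameter and avoid the factor $2$ altogether, but the route above keeps every constant manifestly absolute without appealing to the Appendix.
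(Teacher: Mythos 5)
Your proof is correct and follows exactly the paper's route: the paper's own proof of Theorem~\ref{thm:120} is the one-line observation that the claim follows from Corollary~\ref{cor:60}, Proposition~\ref{prop:suppl_angles}, and the inequality $\mdim \ge \hdim$. You have simply supplied the bookkeeping (passing to a compact $s$-set, splitting the $\delta$-scales to accommodate the strict inequality $\delta'>\delta$ in the proposition) that the paper leaves implicit, and these details check out.
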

\begin{proof}
The claim readily follows from
Corollary \ref{cor:60}, Proposition \ref{prop:suppl_angles}
and the fact that the upper Minkowski dimension is
greater or equal than the Hausdorff dimension.
\end{proof}

\begin{rem}
In fact, as for the other angles, 
in Theorem~\ref{thm:120} it is also enough to assume that
the upper Minkowski dimension is bigger than 
$\frac{C}{\delta}\log(\frac{1}{\delta})$. This follows from a more general
result that we prove in the Appendix.
\end{rem}

To find angles arbitrarily close to $0$ and $180^\circ$,
it suffices to have infinitely many points.
\begin{prop} \label{prop:pi_angle}
Any $A\subset \R^n$ of infinite cardinality
contains angles arbitrarily close to $0$ and
angles arbitrarily close to $180^\circ$.
Therefore $\Ch(0,\de)=\Ch(180^\circ,\de) = 0$.
\end{prop}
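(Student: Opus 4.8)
The plan is to deduce the statement from the classical fact, already used implicitly above for $90^\circ$, that an infinite set in $\R^n$ has an accumulation point, and then to examine the configuration of three points near such a point. First I would take an infinite subset of $A$; being a bounded-or-not set of infinite cardinality in $\R^n$, it has a countable subset with an accumulation point $x$, which we may take to lie in $\overline{A}$, but in fact we only need two points of $A$ to be arbitrarily close to each other compared to their distance from a third fixed point of $A$. Concretely, fix any point $z\in A$ and, for each $\eps>0$, use the accumulation property to find two distinct points $y_1,y_2\in A\setminus\{z\}$ with $\|y_1-y_2\|<\eps\cdot\min(\|y_1-z\|,\|y_2-z\|)$. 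Then in the triangle $z y_1 y_2$ the side $y_1y_2$ is much shorter than the other two sides, so the angle at $z$ is close to $0$ and the angles at $y_1$ and $y_2$ are close to $0$ and close to $180^\circ$ respectively (their sum is close to $180^\circ$, and the one opposite the long side is the large one). This produces angles in $A$ arbitrarily close to both $0$ and $180^\circ$.

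The key steps, in order: (1) observe that an infinite subset of $\R^n$ has an accumulation point, so in particular there are pairs of distinct points of $A$ at arbitrarily small mutual distance; (2) fix a reference point $z\in A$ and, using (1) applied to $A\setminus\{z\}$ (still infinite), pick $y_1,y_2$ whose distance is small relative to their distances to $z$; (3) apply the law of cosines (or just elementary trigonometry) in the triangle $z y_1 y_2$ to conclude that $\angle y_1 z y_2\to 0$ as $\eps\to 0$, hence also, since the angle sum is $180^\circ$, that the larger of the base angles tends to $180^\circ$; (4) conclude $\Ch(0,\de)=\Ch(180^\circ,\de)=0$ from Definition~\ref{def:ch}, since any set that avoids a neighborhood of $0$ (or of $180^\circ$) must then be finite, and finite sets have Hausdorff dimension $0$. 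For the very last equality one should also recall, as noted just before the statement, that for the $\Ch$ problem one may assume the set is closed, but here we do not even need that: finiteness of $A$ already forces $\hdim(A)=0$.

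I expect no serious obstacle here; this is an elementary argument and is attributed above to Erd\H{o}s and F\"uredi \cite{erdos/furedi}. The only mild care needed is in step (3): one must check that it is genuinely the base angle \emph{opposite the short side} that is forced to be small and the one opposite a long side that is forced to be large, rather than, say, both base angles being near $90^\circ$ — but since the two long sides need not be equal, a cleaner route is to bound directly: $\sin(\angle y_1 z y_2) \le \|y_1-y_2\|/\max(\|y_1-z\|,\|y_2-z\|)\cdot(\text{const})$ is not quite right, so instead note $\|y_1 - y_2\| \ge \big|\,\|y_1-z\| - \|y_2 - z\|\,\big|$ forces $\|y_1-z\|$ and $\|y_2-z\|$ to be comparable, and then the law of cosines gives $\cos(\angle y_1 z y_2)\to 1$. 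After that, $\angle z y_1 y_2 + \angle z y_2 y_1 \to 180^\circ$, and since $\|y_1-y_2\|$ is the shortest side, the angle $\angle z y_1 y_2$ (opposite the side $z y_2$) and $\angle z y_2 y_1$ are the two large ones; picking whichever is $\ge 90^\circ$ and refining $\eps$ gives angles arbitrarily close to $180^\circ$ as well. This completes the proof plan.
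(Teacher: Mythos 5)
Your argument has two genuine gaps, and the second one is fatal to the $180^\circ$ half of the statement. First, an infinite subset of $\R^n$ need not have an accumulation point unless it is bounded (Bolzano--Weierstrass requires boundedness): the set $A=\{(k,(-1)^k):k\in\N\}\subset\R^2$ is infinite, uniformly separated, and contains no two points at mutual distance less than $2$, so your step (1) -- and hence the choice of $y_1,y_2$ with $\|y_1-y_2\|<\eps\cdot\min(\|y_1-z\|,\|y_2-z\|)$ -- is impossible for such sets. Second, and more seriously, even when your configuration exists it does not produce an angle near $180^\circ$. You correctly get $\angle y_1zy_2\to 0$ and hence $\angle zy_1y_2+\angle zy_2y_1\to 180^\circ$, but that sum can be split as roughly $90^\circ+90^\circ$: if $y_1-y_2$ is orthogonal to $z-y_1$, the triangle is a thin near-isosceles one whose two base angles are both close to $90^\circ$. ``Picking whichever is $\ge 90^\circ$'' therefore only yields an angle near $90^\circ$, not near $180^\circ$. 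This is precisely why the paper does not argue this way: it invokes the pigeonhole principle only for the small angle, and cites Erd\H{o}s--F\"uredi (Theorem 4.3 of \cite{erdos/furedi}) for the large angle; the asymmetry of the two quantitative bounds there ($C/\sqrt[n-1]{N}$ versus $C/\sqrt[n-1]{\log N}$) already signals that the $180^\circ$ direction is the genuinely hard one.

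If you want an elementary self-contained proof for infinite sets (where the quantitative Erd\H{o}s--F\"uredi bound is not needed), the correct configuration is three points at separated \emph{scales} in nearly the \emph{same direction}, not two nearby points and one far one. Concretely: if $A$ has a finite accumulation point $x$, choose $a_k\in A$ with $r_k=\|a_k-x\|\to 0$, pass to a subsequence along which $r_{k+1}/r_k\to 0$ and the unit vectors $(a_k-x)/r_k$ converge (compactness of $S^{n-1}$); then for $j<k<m$ the vectors $a_j-a_k$ and $a_m-a_k$ point in nearly opposite directions, so $\angle a_ja_ka_m$ is close to $180^\circ$, while $\angle a_ka_ja_m$ is close to $0$. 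If $A$ has no finite accumulation point, fix $z\in A$ and run the same argument on the directions $(a-z)/\|a-z\|$ with $\|a-z\|\to\infty$. Either way the key point your proposal misses is that the two long legs must be forced to be nearly \emph{parallel} (by clustering of directions), not merely of comparable length; comparability of lengths alone says nothing about the individual base angles.
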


\begin{proof}[Sketch of the proof]
We claim that given $N$ points in $\R^n$
they must contain an angle less than
$\delta_1= \frac{C}{\root{n-1}\of{N}}$
and an angle greater than $180^\circ-\delta_2$
with $\delta_2=\frac{C}{\root{n-1}\of{\log N}}$.
The former follows easily from the pigeonhole principle.
The latter is a result of Erd\H os and
F\"uredi \cite[Theorem~4.3]{erdos/furedi}.
\end{proof}

\section{Appendix}

\subsection{A transfinite construction}

We prove the following theorem, which shows 
that if we allowed arbitrary sets in Definition~\ref{def:basicdef} 
then $C(n,\alpha)$ would be $n$.

%Now we show that $C(n, \alpha)$ is not the same if $A$ is not required to be analytic in the definition.

\begin{thm}\label{thm:transf}
Let $n\ge 2$. 
For any $\alpha \in [0,180^\circ]$ there exists $H\subset \R^n$ such
that $H$ does not contain the angle $\alpha$, and $H$ has full Lebesgue
outer measure; that is, its complement does not contain any measurable set
with positive measure.
In particular, $\hdim(H)=n$.
\end{thm}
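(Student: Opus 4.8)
The plan is to build $H$ by transfinite recursion, adding one point at a time so as to kill all potential ``bad triples'' that would form the angle $\alpha$, while at the same time ensuring that $H$ meets every closed set of positive Lebesgue measure. Enumerate all closed subsets of $\R^n$ of positive Lebesgue measure as $\{F_\xi : \xi < \mathfrak c\}$ (there are exactly $\mathfrak c$ of them). We recursively choose points $x_\xi \in F_\xi$ for $\xi < \mathfrak c$ and set $H = \{x_\xi : \xi < \mathfrak c\}$. At stage $\xi$ we have already chosen $H_\xi = \{x_\eta : \eta < \xi\}$, a set of cardinality $|\xi| < \mathfrak c$, and we must pick $x_\xi \in F_\xi$ so that adding it creates no angle $\alpha$: that is, for all $y,z \in H_\xi \cup \{x_\xi\}$ the angle at any vertex of the triple is not $\alpha$. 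The point $x_\xi$ must avoid finitely-or-fewer ``forbidden loci'' determined by pairs of previously chosen points, and each such locus is small (of dimension $< n$, hence Lebesgue-null), so the union over all $< \mathfrak c$ such loci is still null in $F_\xi$ — provided the union of $< \mathfrak c$ null sets can fail to cover $F_\xi$. Since $F_\xi$ has positive measure, this follows as long as each forbidden locus is null.

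The key geometric point is therefore: \textbf{for each fixed pair $y \ne z$ in $\R^n$, the set of points $w$ such that $w,y,z$ form an angle $\alpha$ (at $w$, at $y$, or at $z$) has Lebesgue measure zero.} For the angle at $y$ (resp.\ at $z$): the set of $w$ with $\angle(w-y, z-y) = \alpha$ is the boundary of a circular cone with apex $y$ and axis along $z-y$, an $(n-1)$-dimensional surface (degenerating to a half-line or half-space in the extreme cases $\alpha \in \{0,180^\circ\}$ or $\alpha = 90^\circ$, still null); similarly for the apex at $z$. For the angle at $w$ itself: the set of $w$ with $\angle(y-w, z-w) = \alpha$ is again (for $\alpha \notin \{0, 90^\circ, 180^\circ\}$) a piece of a torus-like surface of revolution about the line $yz$ — indeed for fixed $\alpha$ the locus of apexes seeing the segment $yz$ at angle $\alpha$ is a classical ``arc'' surface, $(n-1)$-dimensional; in the boundary cases it is respectively the segment $yz$ minus endpoints ($\alpha = 180^\circ$), its exterior on the line ($\alpha = 0$), or the sphere with diameter $yz$ ($\alpha = 90^\circ$). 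In every case the locus is a null set. Thus at stage $\xi$ the collection of points we must avoid is a union of at most $3|\xi|^2 \cdot 2 < \mathfrak c$ null sets together we must avoid; this union need not have full measure, so $F_\xi$ is not covered and a legal choice $x_\xi$ exists.

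The remaining step is bookkeeping: once $H = \{x_\xi : \xi<\mathfrak c\}$ is constructed, it contains no angle $\alpha$ because any triple from $H$ was already present at some stage $\xi$ (the stage where the last of its three points was added) and the construction explicitly forbade that triple from realizing $\alpha$ at any of its vertices. And $H$ has full outer measure: if $G \subset \R^n \setminus H$ were measurable with positive measure, then $G$ contains a closed set $F$ of positive measure (inner regularity), $F = F_\xi$ for some $\xi$, and $x_\xi \in F_\xi \cap H \subset F \cap H = \emptyset$, a contradiction. Finally $\hdim(H) = n$ since full outer measure forces $H$ to be non-null for Hausdorff measure $\Hd^n$ (a set with $\Hd^n$-measure zero is Lebesgue-null, hence has a null-complement-complement... more directly, its complement would contain a full-measure $G_\delta$, contradicting full outer measure of $H$), so $\hdim(H) \ge n$, and trivially $\le n$.

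\textbf{Main obstacle.} The genuine content is the geometric lemma that for each fixed $y \ne z$, the three loci of ``third points'' making angle $\alpha$ (vertex at $w$, at $y$, at $z$) are Lebesgue-null; this requires a uniform description of the apex locus (the set of $w$ with $\angle ywz = \alpha$) as an $(n-1)$-dimensional analytic surface, with careful attention to the degenerate angles $0$, $90^\circ$, $180^\circ$ where the surface is a cone/plane/sphere/segment. The transfinite induction itself is then routine, using only that $< \mathfrak c$ null sets cannot cover a positive-measure set — which in turn is immediate since the union of $< \mathfrak c \le 2^{\aleph_0}$ null sets can certainly be non-null (e.g.\ under CH it is a union of $\le \aleph_1 = \mathfrak c$ sets, but here we take exactly $< \mathfrak c$ many at each stage, and any countable-union-of-null argument plus the fact that we only need \emph{one} uncovered point suffices; more carefully, we note that $F_\xi$ has positive measure while the forbidden set is a union of fewer than $\mathfrak c$ null sets, and we invoke the standard fact that $\R^n$ — equivalently any positive-measure set — is not the union of fewer than $\mathrm{cof}(\mathcal{N})$ null sets, with $\mathrm{cof}(\mathcal N) \le \mathfrak c$; since at stage $\xi$ we have $|\xi| < \mathfrak c$ forbidden null sets this could be an issue, so in fact one should enumerate in order type exactly $\mathfrak c$ and use that at each stage strictly fewer than $\mathfrak c$ sets have been excluded, invoking additivity of the null ideal — the cleanest route is to just use that a positive-measure set is not a union of $< \mathrm{add}(\mathcal N)$ null sets is too weak, so instead observe each forbidden locus is not merely null but is contained in a countable union of $C^1$ hypersurfaces, and a positive-measure set cannot be covered by $< \mathfrak c$ hypersurfaces by a Fubini/slicing argument). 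I expect the write-up to spend most of its length making this covering step airtight.
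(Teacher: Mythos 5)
Your proposal follows essentially the same route as the paper's proof: a transfinite recursion of length $\mathfrak c$ over the positive-measure Borel (or closed) sets, choosing at each stage a point of $B_\ga$ avoiding the angle-$\al$ loci of all earlier pairs, where the crucial point --- which you reach correctly after some back-and-forth --- is that each such locus is contained in finitely many proper algebraic hypersurfaces and that fewer than $\mathfrak c$ such hypersurfaces cannot cover a set of positive measure (the paper's Lemma~\ref{l:surfaces}, proved by induction on $n$ via Fubini, exactly the ``Fubini/slicing argument'' you sketch). The only substantive piece left unwritten is the proof of that covering lemma, and you should note that your first instinct (that a union of fewer than $\mathfrak c$ null sets cannot cover a positive-measure set) is not a theorem of ZFC --- it is equivalent to a statement about the covering number of the null ideal that is independent --- so the detour through algebraic hypersurfaces is genuinely necessary rather than merely the cleanest route.
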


The proof we present here is shorter than our original proof, this one 
was suggested by Marianna Cs\"ornyei.

We need the following simple lemma, which might be well-known even for more
general sets but for completeness we present a proof. 
Recall that an algebraic set is the 
set of solutions of a system of polynomial equations.
%locus of zeros of a collection of polynomials. 

\begin{lemma}\label{l:surfaces}
Less than continuum many proper algebraic subsets of $\R^n$ cannot cover
a Borel set of positive $n$-dimensional Lebesgue measure.
\end{lemma}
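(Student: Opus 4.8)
The statement is a covering / Baire-category-type fact about algebraic hypersurfaces: a union of fewer than $\cont$ proper algebraic subsets of $\R^n$ cannot have positive Lebesgue measure. My plan is to reduce to the case of a single proper algebraic set having measure zero, and then exploit the fact that $\R^n$ (or more precisely a segment in a generic direction) can be "fibred" in continuum many disjoint pieces, each of which must meet the supposed covering family essentially fully, forcing a cardinality contradiction.

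\textbf{Step 1: a single proper algebraic set is null.} First I would record that a proper algebraic subset $Z=\{p_1=\dots=p_m=0\}$ of $\R^n$ (proper meaning not all of $\R^n$, so some $p_i$ is not identically zero) has $n$-dimensional Lebesgue measure zero. This is standard: pick $p=p_i\not\equiv 0$; on almost every line parallel to a coordinate axis along which $p$ does not vanish identically, $p$ has finitely many zeros, so by Fubini the zero set of $p$ is null, and $Z\subset\{p=0\}$. (One should first note that $p$ is not identically zero on almost every axis-parallel line in at least one of the $n$ coordinate directions, again by a Fubini/induction argument on the degree.)

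\textbf{Step 2: the continuum-fibring argument.} Suppose for contradiction that $\{Z_\xi\}_{\xi<\kappa}$ is a family of proper algebraic sets with $\kappa<\cont$ and $B\subset\bigcup_\xi Z_\xi$ is Borel with $|B|>0$. By Step 1 each $Z_\xi$ is null, so $\bigcup_\xi Z_\xi$ is a union of $\kappa$ null sets. The point is \emph{not} that this is automatically null (that would need $\kappa<\operatorname{add}(\mathcal N)$, which we do not assume) — rather, I would use the algebraic rigidity. Here is the key: choose a line $\ell$ (a generic direction) such that $B$ meets a positive-length subset of $\ell$-translates of $\ell$ in positive linear measure — possible by Fubini since $|B|>0$. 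For each such translate $\ell'$, the intersection $\ell'\cap Z_\xi$ is either all of $\ell'$ or a finite set (since a polynomial restricted to a line is either identically zero or has finitely many roots), and "$\ell'\subset Z_\xi$" can happen for at most... hmm, this needs care.

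\textbf{Alternative cleaner route, which I would actually pursue.} Intersect everything with a fixed generic line $\ell_0$ on which $B$ has positive linear measure (exists by Fubini). Each $Z_\xi\cap\ell_0$ is either all of $\ell_0$ or finite. If some $Z_\xi\supset\ell_0$ we are not immediately done, but we may choose the direction of $\ell_0$ outside the directions "captured" by the $Z_\xi$'s provided there are $<\cont$ such forbidden directions and $\cont$ available directions — and indeed there are only $\kappa<\cont$ sets, each containing at most finitely many lines of a fixed direction-family... actually the robust statement is: there are $\cont$ many parallel lines in a fixed generic direction, $B$ meets a positive-measure set of them in positive linear measure, and no $Z_\xi$ can contain a positive-measure set of parallel lines (else $Z_\xi$ would have positive $n$-measure, contradicting Step 1). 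So fix that direction; let $\mathcal L$ be the positive-measure set of translates $\ell'$ with $|B\cap\ell'|>0$; for each $\ell'\in\mathcal L$, the countably-or-fewer... no: for fixed $\ell'$, $B\cap\ell'\subset\bigcup_\xi (Z_\xi\cap\ell')$, and each term is either $\ell'$ (happens for at most $\kappa$ values, but crucially for each fixed $\ell'$ the set of $\xi$ with $\ell'\subset Z_\xi$ we don't control) or finite. The genuinely clean finish: pick instead $\cont$ many \emph{pairwise non-parallel} lines $\{\ell_t\}_{t}$ through a common point $x_0\in B$ where $B$ has the density property, arranged so that for each $Z_\xi$ only finitely many $\ell_t$ lie inside $Z_\xi$ (possible because $Z_\xi$, being a proper algebraic set, contains lines through $x_0$ in only finitely many directions unless $Z_\xi$ contains a whole hyperplane-family — handle that sub-case by Step 1 again / by an induction on $n$). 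Then $\bigcup_\xi Z_\xi$ meets all but $\le\kappa<\cont$ of these $\cont$ lines in a finite set, while $B$ meets $\cont$ of them in a positive-length set near $x_0$; since $B\subset\bigcup Z_\xi$, we get $\cont-\kappa=\cont$ lines on which a positive-length set is covered by a finite set — contradiction.

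\textbf{Main obstacle.} The delicate point is organizing the direction-counting so that the "bad" lines (those swallowed whole by some $Z_\xi$) number fewer than $\cont$; this is exactly where one uses that the $Z_\xi$ are \emph{algebraic and proper}, not arbitrary null sets, and it should be handled by an induction on $n$ (the zero set of a single nonzero polynomial, sliced by a generic pencil of lines, is finite on all but finitely many — or at worst a lower-dimensional algebraic family of — lines of the pencil). I expect Step 1 to be routine and this direction-counting to be the real content, but since each $Z_\xi$ is a finite intersection of hypersurfaces and a hypersurface contains at most finitely many lines through a given generic point unless it is itself a union of hyperplanes through that point, the bookkeeping closes.
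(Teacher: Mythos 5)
There is a genuine gap, and it sits exactly at the point you yourself flag as ``the real content.'' Your plan reduces everything to the claim that, for a suitable point $x_0$, each $Z_\xi$ contains only finitely many lines through $x_0$ unless $Z_\xi$ is a union of hyperplanes. That dichotomy is false: a cone with vertex $x_0$ (e.g.\ $\{x^2+y^2=z^2\}$ with $x_0$ the origin) is an irreducible hypersurface containing continuum many lines through $x_0$. Worse, you cannot take $x_0$ ``generic'' relative to the $Z_\xi$: since $B\su\bigcup_\xi Z_\xi$, any $x_0\in B$ lies on some of the $Z_\xi$, and you have no control over how many or over the line-geometry of those particular $Z_\xi$ at $x_0$. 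What is true is only that, for each $\xi$, the directions of lines through $x_0$ contained in $Z_\xi$ form a \emph{proper algebraic} (hence null, but possibly continuum-sized) subset $D_\xi$ of $S^{n-1}$. So the bad directions are a union of $\kappa<\cont$ null algebraic subsets of the sphere, and you must find a direction in the positive-measure set $G=\{\theta: |B\cap\ell_\theta|>0\}$ avoiding all $D_\xi$. But ``fewer than continuum many proper algebraic subsets of $S^{n-1}$ cannot cover a positive-measure subset of $S^{n-1}$'' is precisely the lemma one dimension down --- a union of $\kappa<\cont$ null sets need not be null, so no cardinality or measure count closes this without an inductive hypothesis. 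The same objection applies to your parallel-lines variant. Your parenthetical ``handle by induction on $n$'' points at the right fix, but the induction is not optional bookkeeping: it is the entire proof, and as written your argument replaces it with a false geometric claim.

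For comparison, the paper's proof is exactly this induction, organized so that the exceptional fibres are trivial to count: it slices by horizontal hyperplanes $\{x_n=t\}$ rather than by a pencil of lines. For a single proper algebraic set $A$, the section $A_t$ fails to be a proper algebraic subset of $\R^{n-1}$ for only \emph{finitely} many $t$ (the common zeros of the nonzero coefficient polynomials in $x_n$), so over $\kappa<\cont$ sets there are fewer than continuum many bad heights $t$; for every other $t$ the inductive hypothesis forces $B_t$ to be null, and Fubini applied to $t\mapsto |B_t|$ gives a measurable set $\{t:|B_t|>0\}$ of positive measure, hence of cardinality $\cont$, a contradiction. Your Step 1 and the ``finite-or-everything on a line'' observation are fine; to repair the proposal you would need to restructure Step 2 as an induction on $n$ (on sections or on the direction sphere) in this spirit.
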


\begin{proof}
We prove by induction. For $n=1$ this is clear since proper algebraic subsets
of $\R$ are finite and every Borel set of positive Lebesgue measure 
has cardinality
continuum. 

Suppose that the lemma holds for $n-1$ but it is false for $n$, 
so there exists a collection $\mathcal A$ of 
less than continuum many proper algebraic subsets of $\R^n$ such that 
they cover a Borel set $B\su\R^n$ with positive Lebesgue measure.

Let $H^t$ denote the ``horizontal'' section 
$H^t=\{(x_1,\ldots,x_{n-1}):(x_1,\ldots,x_{n-1},t) \in H\}$ of a set
$H\su\R^n$ at ``height'' $t\in\R$. 
If $A$ is a proper algebraic subset of $\R^n$ then with a finite exception
every $A_t$ is a proper algebraic subset of $\R^{n-1}$.
Therefore, by using the assumption that the lemma holds for $n-1$,
we get that $(\cup\mathcal{A})_t$ can contain Borel sets of positive 
$n-1$-dimensional Lebesgue measure only for less than continuum many $t$.
Let $f(t)$ denote the $(n-1)$-dimensional Lebesgue measure of the Borel set
$B_t$. Since $B\su\cup\mathcal{A}$, we obtain that 
$\{t: f(t)>0\}$ has cardinality less than continuum. 

On the other hand, by Fubini theorem $f$ is measurable and its integral is
the Lebesgue measure of $B$, so it is positive. 
This implies that $\{t: f(t)>0\}$ is a measurable set of positive measure,
so it must have cardinality continuum, contradiction.
\end{proof}

\begin{proof}[Proof of Theorem~\ref{thm:transf}]
Take a well-ordering $\{B_\beta : \beta < \cont\}$ of the Borel subsets of
$\R^n$ with positive $n$-dimensional Lebesgue measure. 
We will construct a sequence of points $\{x_\beta : \beta < \cont \}$ of $\Rn$
using transfinite induction
so that 
\begin{equation}\label{condition}
x_{\be}\in B_\be \qquad \textrm{and} \qquad H_\beta= \{x_\de: \de\le\beta\}
\textrm{ does not contain the angle } \al
\end{equation}
 for any $\beta<\cont$.
This will complete the proof since then $H=\{x_\beta:\beta < \cont\}$
will have all the required properties.

Suppose that $\ga < \cont$ and we have already properly defined $x_\be$
for all $\be<\ga$ so that (\ref{condition}) holds for all
$\be<\ga$. 
For any $p,q\in \R^n$, $p\neq q$, let $A_{p,q}$
denote the set of those $x\in\R^n$ for which one of the angles of the triangle
$pqx$ is $\alpha$. Note that $A_{p,q}$ can be covered by three proper
algebraic subsets of $\R^n$. Then, by Lemma~\ref{l:surfaces}, 
the sets $A_{x_\de,x_{\de'}}$ $(\de,\de'<\ga, x_\de\neq x_{\de'})$ cannot
cover $B_\ga$, so we can choose a point 
$$
x_\ga\in B_\ga\setminus \cup\{A_{x_\de,x_{\de'}}\ :\ 
                       \de,\de'<\ga,\ x_\de\neq x_{\de'}\}.
$$
Then (\ref{condition}) also holds for $\beta=\ga$.

This way we obtain a sequence $(x_\be)_{\be<\cont}$, so that 
(\ref{condition}) holds for all $\beta<\cont$, which completes the proof.
\end{proof}

\subsection{The size of the neighborhood in the approximative problems}

Now, our goal is to prove the following theorem, which was claimed in Remark~\ref{rem:closed}.

\begin{thm}\label{thm:closed}
Suppose that $s=s(\alpha, \delta, n)$ is a positive real number such that every analytic set $A\subset\R^n$ with $\Hd^s(A)>0$ contains an angle
from the interval $(\alpha-\delta, \alpha+\delta)$.
Then there exists a closed subinterval
$[\alpha-\gamma, \alpha+\gamma]$ ($\gamma<\delta$) such that
every analytic set $A\subset\R^n$ with $\Hd^s(A)>0$ contains an angle
from the interval $[\alpha-\gamma, \alpha+\gamma]$.
\end{thm}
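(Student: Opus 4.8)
The plan is to argue by contradiction and exploit a compactness/diagonalization principle over the countably many rational sub-intervals. Suppose the conclusion fails, i.e.\ for every $\gamma<\delta$ there exists an analytic set $A_\gamma\subset\R^n$ with $\Hd^s(A_\gamma)>0$ that contains no angle from the closed interval $[\alpha-\gamma,\alpha+\gamma]$. By \eqref{eq:compactch} (or rather the analytic-to-compact passage that precedes it) we may replace each $A_\gamma$ by a compact $s$-set $K_\gamma$ with $0<\Hd^s(K_\gamma)<\infty$ that avoids all angles in $[\alpha-\gamma,\alpha+\gamma]$; moreover we may rescale each $K_\gamma$ to lie inside the unit ball. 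It suffices to run this for a sequence $\gamma_j\uparrow\delta$, e.g.\ $\gamma_j=\delta-1/j$, producing compact sets $K_j\subset B(0,1)$ with $0<\Hd^s(K_j)<\infty$, each avoiding the angles in $[\alpha-\gamma_j,\alpha+\gamma_j]$.

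Next I would extract a limit set. By Blaschke's selection theorem, passing to a subsequence we may assume $K_j\to K$ in the Hausdorff metric for some compact $K\subset B(0,1)$. The key point is that $K$ still avoids the \emph{open} interval $(\alpha-\delta,\alpha+\delta)$: if $x,y,z\in K$ were distinct points with $\angle(y-x,z-x)=\beta\in(\alpha-\delta,\alpha+\delta)$, then for all large $j$ we could approximate $x,y,z$ by points $x_j,y_j,z_j\in K_j$, and since the angle function is continuous away from coincidences, $\angle(y_j-x_j,z_j-x_j)\to\beta$; but $\beta$ lies in $[\alpha-\gamma_j,\alpha+\gamma_j]$ for $j$ large enough (because $\gamma_j\to\delta>|\beta-\alpha|$), contradicting the choice of $K_j$. (One must take a little care that the approximating triples stay non-degenerate; this is routine since $x,y,z$ are distinct and convergence is in Hausdorff metric, so small perturbations keep them distinct.) Hence $K$ contains no angle from $(\alpha-\delta,\alpha+\delta)$.

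Finally I would compare the size of $K$ against the hypothesis of the theorem. This is the only genuinely delicate point: Hausdorff measure is not continuous under Hausdorff-metric convergence, so we cannot directly conclude $\Hd^s(K)>0$. The fix is to carry along a \emph{lower bound on $s$-dimensional content}. Since each $K_j$ is a compact $s$-set with $\Hd^s(K_j)>0$, by Frostman's lemma each carries a probability measure $\mu_j$ supported on $K_j$ with $\mu_j(B(x,r))\le C_j r^s$; after normalising (replacing $K_j$ by a suitable subset of comparable measure, or rescaling $\mu_j$) we may assume a \emph{uniform} constant $C$, i.e.\ $\mu_j(B(x,r))\le C r^s$ for all $j$. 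Here one should be mildly careful: to get the uniform Frostman constant one can instead argue with the Hausdorff content $\Hc^s$ directly, noting $\Hc^s(K_j)\ge c>0$ can be arranged by scaling. Then $\{\mu_j\}$ has a weak-$*$ limit $\mu$ supported on $K$ (since the $K_j$ converge in Hausdorff metric), and the Frostman bound passes to the limit: $\mu(B(x,r))\le C r^s$ for every $x,r$. This forces $\Hd^s(K)>0$ (indeed $\mu(K)=1$ and the mass distribution principle gives $\Hc^s(K)\ge 1/C$). So $K$ is an analytic (in fact compact) set with $\Hd^s(K)>0$ avoiding all angles in $(\alpha-\delta,\alpha+\delta)$, contradicting the standing hypothesis on $s$. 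Therefore some $\gamma<\delta$ works, as claimed.

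The main obstacle, as indicated, is the failure of continuity of $\Hd^s$ under Hausdorff convergence; this is resolved by transporting Frostman measures and taking a weak-$*$ limit, with the one subtlety that the Frostman constants must be made uniform (handled by rescaling the content, or by passing to comparable subsets). Everything else—Blaschke selection, continuity of the angle functional away from the diagonal, and the fact that $\gamma_j\to\delta$ eventually captures any $\beta$ with $|\beta-\alpha|<\delta$—is routine.
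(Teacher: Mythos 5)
Your overall strategy is the paper's: argue by contradiction with $\gamma_j=\delta-1/j$, pass to compact $s$-sets, normalize, extract a Hausdorff-metric limit $K$, show that $K$ avoids the open interval $(\alpha-\delta,\alpha+\delta)$ (continuity of the angle functional away from coincident points, plus $\gamma_j\to\delta$), and show that $K$ is still large by carrying a lower bound on $s$-dimensional content to the limit. The angle part and the use of upper semicontinuity of the content under Hausdorff convergence are exactly Lemma~\ref{lemma:limit} of the paper.

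The gap is at the point you yourself flag as ``the only genuinely delicate point'', and neither of your proposed fixes closes it. Rescaling $K_j$ so that it lies in the unit ball gives \emph{no} lower bound on $\Hc^s(K_j)$: a compact $s$-set contained in $B(0,1)$ can have arbitrarily small $s$-content, so the normalized sequence may satisfy $\Hc^s(K_j)\to 0$ and the Hausdorff limit $K$ can degenerate to a point. You cannot repair this ``by scaling'': enlarging $K_j$ by a factor $\lambda_j$ so as to force $\Hc^s(K_j)=1$ makes the diameters unbounded, the mass can spread out so that every fixed ball receives vanishing content, and Blaschke selection inside a fixed compact set is no longer available. The same obstruction reappears in your Frostman variant, since the Frostman constant of a probability measure on $K_j$ is governed by $1/\Hc^s(K_j)$ and need not be uniform. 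What is actually required --- and what the paper isolates as Lemma~\ref{lemma:surusegi} --- is a density statement: for every compact $A$ with $\Hd^s(A)>0$ there is a ball $B$ with $\Hc^s(A\cap B)\ge c\,\diam(B)^s$, where $c=2^{-2-s}$ depends only on $s$. (Sketch: pick $r$ with $\Hd^s_r(A)>\Hd^s(A)/2$, take a cover by balls $B_i$ of diameter at most $r$ with $\sum_i\diam(B_i)^s\le 2^{1+s}\Hd^s(A)$, and note that if every $B_i$ had $\Hc^s(A\cap B_i)<2^{-2-s}\diam(B_i)^s$, then subadditivity of $\Hd^s_r$ together with $\Hd^s_r(A\cap B_i)=\Hc^s(A\cap B_i)$ would force $\Hd^s_r(A)<\Hd^s(A)/2$, a contradiction.) Rescaling \emph{that ball} to a fixed unit ball yields the uniform bound $\Hc^s(K_j')\ge c$, after which your limit argument goes through. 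Your phrase ``passing to comparable subsets'' is the right instinct, but it is exactly this lemma and it needs a proof; note also that once the uniform content bound is in hand, the Frostman-measure detour is unnecessary, since upper semicontinuity of $\Hc^s$ under Hausdorff convergence gives $\Hc^s(K)\ge c>0$ directly.
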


To prove this theorem, we need two lemmas. For $r \in (0,\infty]$ let
$$\Hd^s_r(A)=\inf \left\{ \sum_{i=1}^\infty \diam(U_i)^s \,:\, \diam(U_i)\le r, \ A\subset \cup_{i=1}^\infty U_i\right\},$$
thus $\Hd^s(A)=\lim_{r\to 0+} \Hd^s_r(A)$.

\begin{lemma}\label{lemma:limit}
Let $A_i$ be a sequence of compact sets converging in the Hausdorff metric to a set $A$. Then the following two statements hold.
\begin{itemize}
\item[(i)] $\Hc^s(A) \ge \limsup_{i\to\infty} \Hc^s(A_i).$
\item[(ii)] Suppose that for every $i=1,2,\ldots$ the set $A_i$ does not contain any angle from $[\alpha-\delta+1/i, \ \alpha+\delta-1/i]$. Then $A$ does not contain any angle from $(\alpha-\delta, \,\alpha+\delta)$.
\end{itemize}
\end{lemma}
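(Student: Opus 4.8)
The plan is to prove the two items separately; both will rest on one elementary fact about Hausdorff convergence: if $A_i\to A$ in the Hausdorff metric and $U$ is open with $A\subset U$, then $A_i\subset U$ for all sufficiently large $i$. (The Hausdorff limit $A$ of compact sets is compact, so some $\rho$-neighbourhood of $A$ lies in $U$, and $d_H(A_i,A)<\rho$ eventually.)

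\emph{For (i):} I may assume $\Hc^s(A)<\infty$. Given $\varepsilon>0$, I would first pick a countable cover $A\subset\bigcup_j U_j$ with $\sum_j\diam(U_j)^s<\Hc^s(A)+\varepsilon$ (so each $\diam(U_j)$ is finite), and then fatten each $U_j$ to an open set $V_j=\{x:\mathrm{dist}(x,U_j)<\tau_j\}$, with $\tau_j>0$ chosen small enough that $\diam(V_j)^s\le\diam(U_j)^s+\varepsilon2^{-j}$; this is possible by continuity of $t\mapsto t^s$ on $[0,\infty)$, the case $\diam(U_j)=0$ included. Now $\{V_j\}$ is an open cover of the compact set $A$, so finitely many of them cover it, say $A\subset V_1\cup\dots\cup V_M=:U$. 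By the fact above, $A_i\subset U$ for all large $i$, hence $\Hc^s(A_i)\le\sum_{j=1}^M\diam(V_j)^s\le\sum_j\diam(V_j)^s<\Hc^s(A)+2\varepsilon$. Taking $\limsup_{i\to\infty}$ and then letting $\varepsilon\to0+$ yields (i).

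\emph{For (ii):} I would argue by contradiction. Suppose $A$ contained an angle $\beta\in(\alpha-\delta,\alpha+\delta)$, realised by distinct points $x,y,z\in A$, and fix $\eta>0$ with $\beta\in[\alpha-\delta+\eta,\alpha+\delta-\eta]$. Since $d_H(A_i,A)\to0$, for each $i$ I can choose $x_i,y_i,z_i\in A_i$ with $x_i\to x$, $y_i\to y$, $z_i\to z$. As $x,y,z$ are pairwise distinct, so are $x_i,y_i,z_i$ once $i$ is large, and then the angle $\beta_i$ between $y_i-x_i$ and $z_i-x_i$ is well defined and tends to $\beta$, by continuity of $(u,v)\mapsto\arccos(\langle u,v\rangle/(|u|\,|v|))$ away from $u=0$ and $v=0$. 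For $i$ large enough that $1/i<\eta/2$ and $|\beta_i-\beta|<\eta/2$, one checks that $\beta_i\in[\alpha-\delta+1/i,\alpha+\delta-1/i]$; but $\beta_i$ is an angle occurring in $A_i$, contradicting the hypothesis on $A_i$. Hence $A$ contains no angle from $(\alpha-\delta,\alpha+\delta)$, which is (ii).

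The whole argument is soft; the only place that needs a little care is the open-set fattening in part (i), where the infinite $s$-sum must be kept under control, and I do not expect any real obstacle beyond that.
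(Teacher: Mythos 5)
Your proof is correct and follows essentially the same route as the paper, which merely records part (i) as ``well-known and easy'' and sketches part (ii) by the same approximation of three points of $A$ by points of $A_i$; your write-up just supplies the standard details (the open fattening and finite subcover for (i), the continuity of the angle for (ii)).
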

\begin{proof}The first statement is well-known and easy.
To prove the second, notice that for any three points $x,y,z$ of $A$ there exist three points in $A_i$ arbitrarily close to $x,y,z$, for sufficiently large $i$.
\end{proof}
The next lemma follows easily from \cite[Theorem 2.10.17~(3)]{federer}.
For the sake of completeness, we give a short direct proof.
\begin{lemma}\label{lemma:surusegi}
Let $A\subset\R^n$ be a compact set satisfying $\Hd^s(A)>0$. Then there exists a ball $B$ such that $\Hc^s(A\cap B) \ge c\,\diam(B)^s$,
where $c>0$ depends only on $s$.
\end{lemma}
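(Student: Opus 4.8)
The plan is to prove the lemma with the explicit constant $c=c(s)=2^{-s-1}$, arguing by contradiction. So suppose that, on the contrary, $\Hc^s(A\cap B)<2^{-s-1}\diam(B)^s$ for \emph{every} closed ball $B$. Since $A$ is compact it is contained in some closed ball $B_0$, whence $t:=\Hc^s(A)\le\diam(B_0)^s<\infty$. The goal is to show that this forces $t=0$; then, by the standard fact that vanishing Hausdorff content implies vanishing Hausdorff measure (a cover with small total $s$-content automatically consists of small sets, hence is admissible for $\Hd^s_r$ with small $r$), we would get $\Hd^s(A)=0$, contradicting the hypothesis $\Hd^s(A)>0$.

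To see that $t=0$, the idea is a one-step self-improvement of the trivial bound $\Hc^s(A\cap B)\le\diam(B)^s$. Fix $\varepsilon>0$ and choose a countable cover $A\subset\bigcup_i U_i$ with $\sum_i\diam(U_i)^s<t+\varepsilon$. Each nondegenerate $U_i$ is contained in a closed ball $B_i$ of diameter exactly $2\diam(U_i)$ (take $B_i$ centered at a point of $U_i$ with radius $\diam(U_i)$), so by the contradiction hypothesis applied to $B_i$ together with monotonicity of $\Hc^s$,
\[
\Hc^s(A\cap U_i)\le\Hc^s(A\cap B_i)<2^{-s-1}\diam(B_i)^s=\tfrac12\,\diam(U_i)^s ,
\]
while the degenerate (single-point) sets $U_i$ contribute nothing. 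Summing over $i$ and using countable subadditivity of $\Hc^s=\Hd^s_\infty$ (it is an outer measure) together with $A=\bigcup_i(A\cap U_i)$ gives $t=\Hc^s(A)\le\sum_i\Hc^s(A\cap U_i)\le\tfrac12\sum_i\diam(U_i)^s<\tfrac12(t+\varepsilon)$. Letting $\varepsilon\to0^+$ yields $t\le\tfrac12 t$, and since $t$ is finite this forces $t=0$, as required.

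I do not expect any genuine obstacle: the argument is essentially the observation that the trivial estimate $\Hc^s(A\cap B)\le\diam(B)^s$ cannot improve by more than a fixed factor on \emph{all} balls simultaneously without killing the content of $A$. The only points that need (minor) care are choosing the constant strictly small enough that $t\le c\,2^s t$ is a strict contraction — any $c<2^{-s}$ works, and $2^{-s-1}$ is convenient; bookkeeping the degenerate covering sets of zero diameter; and correctly invoking the two standard facts used as black boxes, namely countable subadditivity of Hausdorff content and the implication $\Hc^s(A)=0\Rightarrow\Hd^s(A)=0$. This gives the claimed direct proof, avoiding the appeal to \cite[Theorem 2.10.17~(3)]{federer}.
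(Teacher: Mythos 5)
Your proof is correct and uses essentially the same mechanism as the paper's: take a near-optimal cover, inflate each covering set to a ball of doubled diameter, apply the contradiction hypothesis to each ball, and use countable subadditivity to extract a factor-of-two contradiction. The only difference is that you run this contraction entirely at the level of the Hausdorff content $\Hc^s$ (concluding $\Hc^s(A)=0$ and hence $\Hd^s(A)=0$), whereas the paper runs it on $\Hd^s_r$ against $\Hd^s$ after first reducing to the case $\Hd^s(A)<\infty$; your variant needs no such reduction, avoids the identity $\Hd^s_r=\Hc^s$ on small sets, and yields the marginally better constant $2^{-s-1}$ in place of $2^{-2-s}$.
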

\begin{proof}
We may suppose without loss of generality that $\Hd^s(A)<\infty$. (Otherwise we choose a compact subset of $A$ with positive and finite $\Hd^s$ measure. If the theorem holds for a subset of $A$ then it clearly holds for $A$ as well.)

%Choose $r>0$ such that whenever $A$ is covered by sets $U_i$ of diameter at most $r$, we have $\sum_i \diam(U_i)^s > \Hd^s(A)/2$.
Choose $r>0$ so that $\Hd^s_r(A)> \Hd^s(A)/2$.
Cover $A$ by sets $U_i$ of diameter at most $r/2$ such that $\sum_i \diam(U_i)^s \le 2 \Hd^s(A)$.
Cover each $U_i$ by a ball $B_i$ of radius at most the diameter of $U_i$.
Then the balls $B_i$ cover $A$, have diameter at most $r$, and $\sum_i \diam(B_i)^s \le 2^{1+s} \Hd^s(A)$.

We claim that one of these balls $B_i$ satisfies the conditions of the Lemma for $c=2^{-2-s}$. Otherwise we have
$$\Hc^s(A\cap B_i) < 2^{-2-s} \, \diam(B_i)^s$$
for every $i$. Since the sets $A\cap B_i$ have diameter at most $r$, clearly $\Hd^s_r(A\cap B_i) = \Hc^s(A\cap B_i)$.
Therefore
\begin{multline*}
\Hd^s_r(A)\le \sum_i \Hd^s_r(A\cap B_i) <\sum_i 2^{-2-s} \, \diam(B_i)^s \\ \le 2^{-2-s} 2^{1+s} \Hd^s(A) = \Hd^s(A)/2,
\end{multline*}
which contradicts the choice of $r$.
\end{proof}

\begin{proof}[Proof of Theorem~\ref{thm:closed}]
Suppose on the contrary that there exist compact sets $K_i\subset \R^n$ with $\Hd^s(K_i)>0$ such that $K_i$ does not contain any angle from $[\alpha-\delta+1/i, \ \alpha+\delta-1/i]$.
% We would like to define a limit $K$ of these sets, satisfying $\Hd^s(K)>0$ and that $K$ does not contain any angle from $(\alpha-\delta, \,\alpha+\delta)$.
Choose a ball $B_i$ for each compact set $K_i$ according to Lemma~\ref{lemma:surusegi}. Let $B$ be a ball of diameter $1$. Let $K_i'$ be the image of $K_i\cap B_i$ under a similarity transformation which maps $B_i$ to the  ball $B$. Thus $\Hc^s(K_i')\ge c$. Let $K$ denote the limit of a convergent subsequence of the sets $K_i$. We can apply Lemma~\ref{lemma:limit} to this subsequence and obtain $\Hc^s(K)\ge c$, implying $\Hd^s(K)>0$. Also, $K$ does not contain any angle from the interval $(\alpha-\delta, \,\alpha+\delta)$, which is a contradiction.
\end{proof}

%%%%%%%%%%%%%%%%%%%%%%%%%%%%%%%%%%%%%%%%%%%%%%%%%%%%%%%%%%%%%%%%%%%%%%%%%%%%%%%%%%%%%%%%%%%%%%%%%%%%%%%%%

\subsection{Replacing Hausdorff dimension by upper Minkowski dimension} 
Our final goal is showing that in the problems when we want angles only in
a neighborhood of a given angle, Hausdorff dimension can be replaced by 
Minkowski dimension. This will follow from the following theorem.
As pointed out by Pablo Shmerkin, this theorem also follows 
from a result of Furstenberg \cite{furstenberg}. 
His result is much more general and it is not immediately trivial to see 
that it implies what we need. 
Therefore we give a direct self-contained proof.

%%%%%%%%%%%%%%%% From here the stuff of Andris %%%%%%%%%%
\begin{thm}\label{t1}
Let $A\subset \R^d$ be a bounded set with upper Minkowski dimension $s>0$. 
Then there exists a compact set $K$ of Hausdorff dimension $s$ such that 
all finite subsets of $K$ are limits of 
homothetic copies of finite subsets of $A$. 
(That is, for every finite set $S\subset K$ and $\eps>0$ 
there exists a set $S'\subset A$ and $r>0$, $t\in \R^d$ such that 
the Hausdorff distance of $t+rS'$ and $S$ is at most $\eps$.)
\end{thm}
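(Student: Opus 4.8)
\textbf{Proof proposal for Theorem~\ref{t1}.}
The plan is to build $K$ as a self-similar-type Cantor set obtained by iterating, at finer and finer scales, configurations of points that actually occur inside $A$. The key mechanism is Lemma~\ref{lem:mink}: because $\mdim(A)>t$ for every $t<s$, there are infinitely many scales $2^{-k}$ at which $A$ contains many ($>2^{(k-l)t}$) points that are pairwise separated by roughly comparable distances. First I would fix an increasing sequence $t_j\uparrow s$ and, using Lemma~\ref{lem:mink}, extract for each $j$ a finite set $F_j\subset A$ of $N_j$ points lying in a ball of some radius $\rho_j$, pairwise separated by at least $\eta_j\rho_j$, with $\log N_j/\log(1/(\eta_j/?))$ bounded below by something tending to $s$; rescaling, I may assume each $F_j$ sits in the unit ball, has $N_j$ points, and minimal pairwise distance $\ge d_j$ with $\log N_j / \log(1/d_j)\ge t_j$.

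The second step is the Moran/Cantor construction. Starting from the unit cube (or ball) $Q_\emptyset$, at level one replace it by $N_1$ disjoint homothetic copies of itself, scaled by a factor $r_1\le d_1/10$ and placed with their centers at an affine image of $F_1$ (so the combinatorial pattern of level-one pieces is a rescaled copy of $F_1\subset A$, and disjointness is guaranteed by $r_1\le d_1/10$). Inside each level-one piece repeat with $F_2$, etc. The resulting compact set $K=\bigcap_m K_m$ is a homogeneous Moran set whose level-$m$ pieces number $\prod_{j\le m}N_j$ and have diameter $\prod_{j\le m} r_j$. A standard mass-distribution / Moran-set dimension computation (put the uniform measure giving each level-$m$ piece mass $(\prod_{j\le m}N_j)^{-1}$, and estimate the local dimension using that consecutive pieces are $\ge r_j\cdot(\text{sep of }F_j)$ apart) shows $\hdim K = \liminf_m \frac{\sum_{j\le m}\log N_j}{\sum_{j\le m}\log(1/r_j)}$. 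By choosing $r_j$ just slightly smaller than $d_j$ (say $r_j = d_j^{1+\epsilon_j}$ with $\epsilon_j\to 0$ slowly and the $N_j$ growing fast enough that the $j$-th term dominates) one arranges this $\liminf$ to equal $s$; one also gets the matching upper bound $\hdim K\le s$ from the natural covering by level-$m$ pieces. This is the step requiring the most care but it is entirely routine Moran-set bookkeeping.

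The third step is the approximation property. Let $S\subset K$ be finite, $|S|=p$, and $\eps>0$. Choose $m$ so large that every level-$m$ piece has diameter $<\eps$ and, moreover, so large that the $p$ points of $S$ lie in $p$ \emph{distinct} level-$m$ pieces whose "pattern" at level $m$ — i.e.\ which sub-piece of the level-$(m-1)$ piece each belongs to — we can read off. Here is the point: the configuration of the centers of the level-$m$ pieces, viewed inside any fixed level-$(m-1)$ piece, is exactly a homothetic copy of $F_m\subset A$. More generally, the set of centers of all level-$m$ pieces is a homothetic copy of the "pattern set'' $F_1 * F_2 * \cdots * F_m$ (iterated sum-set, each $F_j$ scaled by $\prod_{i<j} r_i$), and every point of $S$ is within $\eps$ of such a center. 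So it suffices to realize any $p$-point subset of this iterated pattern as a limit of homothetic copies of $p$-point subsets of $A$. But a $p$-point subset of $F_1*\cdots*F_m$, rescaled by the largest relevant factor, is $\eps$-close (once $m$ is large, since the tail scales are tiny) to a $p$-point subset of a single $F_j$ — indeed pick $j$ = the coarsest level at which our $p$ chosen pieces already separate; then all $p$ points share the same coordinates in $F_1,\dots,F_{j-1}$, so after translating and rescaling by $\prod_{i<j}r_i$ they become $\eps'$-close to $p$ distinct points of $F_j\subset A$. Since $F_j\subset A$, that exhibits $S$ as an $\eps$-limit of a homothetic copy of a finite subset of $A$, as required.

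The main obstacle I anticipate is threading the two competing demands through the choice of parameters $(N_j, d_j, r_j)$: Lemma~\ref{lem:mink} only gives good point-configurations at \emph{sparse} scales $k$ and with the separation and cardinality linked in one specific way, so one must be careful that (a) the Moran pieces at different levels can be nested disjointly (needs $r_j \lesssim d_j$), (b) the dimension $\liminf$ genuinely reaches $s$ and not merely $\liminf t_j$ (needs $\log N_j$ to dominate $\log(1/r_j)/t_j$ infinitely often, which forces us to insert "filler'' levels where we do not subdivide, or simply to let one good level's contribution swamp the accumulated error), and (c) the upper bound $\hdim K\le s$ still holds despite (b). All three are reconcilable, but the arithmetic of the exponents is where the proof has to be written with care rather than waved through.
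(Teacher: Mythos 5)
There is a genuine gap in your third step, and it is fatal to the approach as described. In your Moran construction the local pattern at level $j$ is a rescaled copy of $F_j$, where the sets $F_1,F_2,\ldots$ are extracted from $A$ at completely unrelated locations and scales. Consequently $K$ contains finite configurations that mix scales: for instance a triple $\{x,y,z\}$ in which $x$ and $y$ lie in different level-$1$ pieces while $z$ lies in the same level-$1$ piece as $x$ and separates from it only at level $47$. Your reduction (``pick $j=$ the coarsest level at which the $p$ pieces separate; then all $p$ points share the same coordinates in $F_1,\dots,F_{j-1}$'') silently assumes that all points of $S$ branch apart at one common level; for the triple above there is no such $j$ --- at level $1$ the three points are not yet in distinct pieces, and at the level where they all separate they no longer share their $F_1$-coordinate. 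That triple forms a triangle with one side of length $\approx 1$ and another of length $\approx \prod_{i<47} r_i$, and there is no reason why $A$ should contain, even approximately and up to homothety, three points in that shape: neither Lemma~\ref{lem:mink} nor the definition of upper Minkowski dimension tells you anything about what $A$ looks like near the points of $F_1$ at the scale $\prod_{i<47} r_i$ --- the pattern $F_{47}$ was found somewhere else in $A$ entirely. Since the purpose of the theorem (see Corollary~\ref{cor:mink=haus}) is to transfer the absence of certain angles from $A$ to $K$, and it is precisely these mixed-scale triples that create ``new'' angles, this is not a removable technicality but the heart of the matter. (Your steps 1 and 2, producing a Moran set of dimension $s$, are essentially fine with the bookkeeping you indicate.)

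The paper avoids the problem by not iterating at all: it discretizes $A$ at a single scale $1/n$, uses a minimality-plus-covering argument on the cube content $\Hh^{s-2\delta}_\infty$ to locate one cube $C$ of side at least $n^{\delta/(2d)}/n$ in which the discretization $A_n$ already has content comparable to $|C|^{s-2\delta}$, rescales $C\cap A_n$ to unit size, and takes a Hausdorff-metric limit of these single-scale pieces as $n\to\infty$, using the upper semicontinuity of Hausdorff content (Lemma~\ref{lemma:limit}) to keep the dimension up. Each approximant is genuinely a homothetic copy of a small neighbourhood of a subset of $A$, so every finite configuration in the limit --- including the multi-scale ones --- is automatically a limit of homothetic copies of finite subsets of $A$. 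If you wish to salvage a construction in the spirit of yours, you would need the patterns $F_j$ to occur nested inside one another within $A$ itself, uniformly over all branches; that is essentially Furstenberg's microset machinery and requires a substantially stronger input than Lemma~\ref{lem:mink}.
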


% \begin{rem}
% The set $K$ in Theorem~\ref{t1} will be chosen as a limit of homothetic copies of finite subsets of $A$ in the Hausdorff metric.
% \end{rem}

Applying Theorem~\ref{t1} to a bounded set $A$ that does not contain any angle
from an open interval we get a compact set $K$
with the same property and with $\hdim(K)=\mdim(A)$.
Thus we get the following.

\begin{cor}\label{cor:mink=haus}
For any $\alpha \in [0,180^\circ]$ and $\de>0$, we have
\begin{multline*} 
\Ch(\alpha, \delta) = \sup \{\mdim(A) :
A \subset \R^n \textrm{ for some } n; A \textrm{ is bounded};\\
A \mbox{ does not contain any angle from } (\alpha-\delta, \alpha+\delta) \} .
\end{multline*}
\end{cor}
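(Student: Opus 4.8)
The plan is to realize $K$ as a \emph{micro-set} of $A$: a Hausdorff limit $K'=\lim_j \phi_j(A)\cap Q$, where $Q=[0,1)^d$ and each $\phi_j$ is an \emph{expanding} homothety $x\mapsto r_jx+v_j$ with $r_j\to\infty$, chosen so that $\hdim K'\ge s$, and then to pass to a compact subset $K\subseteq K'$ with $\hdim K=s$ exactly. We may first replace $A$ by $\overline A$ (this changes neither $\mdim$ nor the conclusion, since $N_\eps(\overline A)\le C_d N_\eps(A)$) and apply an initial homothety, so that $A$ is compact and $A\subseteq Q$; once we arrange $\phi_j(A)\cap Q\neq\emptyset$, the limit $K'$ exists along a subsequence by the compactness of the hyperspace of compact subsets of $Q$.

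The point of this reduction is that \emph{any} micro-set has the stated property, and the property passes to subsets. Indeed, let $S=\{y_1,\dots,y_m\}\subseteq K\subseteq K'$ be finite and $\eps>0$; pick $j$ with $\phi_j(A)\cap Q$ within Hausdorff distance $\eps$ of $K'$, so for each $\mu$ there is $a_\mu\in A$ with $|\phi_j(a_\mu)-y_\mu|<\eps$. Then $S'=\{a_1,\dots,a_m\}\subseteq A$ and $r=r_j$, $t=v_j$ satisfy $d_H(t+rS',S)\le\eps$; it is harmless that $t+rS'=\phi_j(S')$ may have fewer than $m$ points, since Hausdorff distance ignores multiplicity. (This is precisely what Corollary~\ref{cor:mink=haus} needs: homotheties preserve angles and $[0,180^\circ]\setminus(\alpha-\delta,\alpha+\delta)$ is closed, so if $A$ omits every angle from $(\alpha-\delta,\alpha+\delta)$ then so does each $\phi_j(S')$, hence so does the limit configuration $S$; together with $\hdim\le\mdim$ this gives the displayed formula.)

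To get $\hdim K'\ge s$ I would work in the dyadic grid. For $k\ge1$ let $N_k$ be the number of dyadic cubes of side $2^{-k}$ meeting $A$; the definition of upper Minkowski dimension says $\limsup_k(\log N_k)/(k\log 2)=s$, so for each $t<s$ there are arbitrarily large $k$ with $N_k\ge 2^{kt}$. Fix $t_j\uparrow s$. One builds inductively the $\phi_j$ together with the first $j$ generations of a homogeneous Moran set in $Q$: at step $j$ one selects (by the pigeonhole below) a dyadic cube $R_j$ meeting $A$, lets $\phi_j$ map $R_j$ onto $Q$, and records in generation $j$ a family of $\phi_j$-images of active, pairwise well-separated dyadic subcubes of $A$ inside $R_j$, arranged so that each generation-$(j-1)$ cube contains at least $2^{(L_j-L_{j-1})t_j}$ of them, the side of a generation-$j$ cube being $\asymp 2^{-L_j}$. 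Passing to the limit yields a Moran set inside $K'$ whose dimension, by the standard estimates (the uniform measure and the mass distribution principle for the lower bound, covering by generation-$j$ cubes for the upper bound), is
\[ \lim_{j\to\infty}\frac{\sum_{i\le j}(L_i-L_{i-1})t_i}{L_j}=s, \]
the equality holding because the gaps $L_i-L_{i-1}$, taken to grow fast, have divergent sum, so the Cesàro average of the $t_i$ tends to $s$ while every partial average is $\le s$. Finally, since $K'$ is compact with $\hdim K'\ge s$, it contains a compact $K$ with $\hdim K=s$: take $K=K'$ if $\hdim K'=s$, and otherwise use the fact noted in the Introduction that a set of positive $\Hd^s$ measure contains a compact $s$-set.

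The main obstacle is the inductive choice of $R_j$: one must keep zooming while retaining enough structure at all later scales. Richness at a single scale is not enough, since inside a fixed cube $A$ may be arbitrarily sparse and "descend to the richest child" loses too much; moreover a careless zoom-in limit can have Hausdorff dimension as large as the Assouad dimension of $A$, which may strictly exceed $\mdim A$, so the construction must be steered to reach $\mdim A$ and then trimmed. The fix is to fix the good scales $2^{-L_1}\gg 2^{-L_2}\gg\cdots$ in advance and to choose $R_j$ by a pigeonhole that makes it rich already at the \emph{next} scale $2^{-L_{j+1}}$, retaining as generation-$j$ cubes a subfamily that still \emph{collectively} carries a definite proportion of the active $2^{-L_{j+1}}$-cubes and discarding the rest; the bookkeeping that makes \emph{every} retained cube rich enough for the subsequent step is the technical heart, and is essentially the mechanism behind Furstenberg's theorem relating Hausdorff dimensions of micro-sets to Minkowski dimension. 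Once this lemma is in place, the dimension computation and the finite-configuration property above are routine.
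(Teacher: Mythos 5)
Your reduction is the right one and coincides with the paper's: the corollary is exactly Theorem~\ref{t1} applied to a bounded set $A$ avoiding the open interval of angles, combined with the observations that homotheties preserve angles, that $[0,180^\circ]\setminus(\alpha-\delta,\alpha+\delta)$ is closed, and that $\hdim\le\mdim$. That part of your argument is complete and matches the paper.

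The gap is in your proof of the micro-set theorem itself. Everything before and after your ``main obstacle'' paragraph is, as you say, routine \emph{conditional on} the selection lemma: a nested hierarchy of cubes at scales $2^{-L_j}$ in which \emph{every} retained generation-$(j-1)$ cube contains at least $2^{(L_j-L_{j-1})t_j}$ retained, well-separated generation-$j$ cubes meeting $A$. You correctly diagnose why single-scale richness plus ``descend to the richest child'' fails, and you state that the fix is a pigeonhole-plus-bookkeeping argument which is ``the technical heart'' and ``essentially the mechanism behind Furstenberg's theorem'' --- but you never carry it out. That lemma \emph{is} the content of Theorem~\ref{t1}: without it nothing in your construction forces $\hdim K'\ge s$ (an unsteered zoom limit of a set of upper box dimension $s$ can perfectly well be a finite set). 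The paper itself remarks that deducing the statement from Furstenberg's result ``is not immediately trivial'' and therefore gives a self-contained argument which avoids the multi-generation bookkeeping entirely: for each $\delta$ and each good scale $n$ it minimizes $\Hh^{s-2\delta}_\infty(B)/m$ over all unions $B$ of $m$ grid cubes of $A_n$; taking a near-optimal cover of a minimizing $B$, discarding the ``bad'' (too small) covering cubes by a volume count, and averaging over the remaining good ones produces a \emph{single} cube $C$ with $|C|\ge n^{\delta/(2d)}/n$ and $\Hh^{s-2\delta}_\infty(C\cap A_n)\ge 2^{-s-2}|C|^{s-2\delta}$. Rescaling $C$ to the unit cube and letting $\delta\to0$ along a Hausdorff-convergent subsequence, the upper semicontinuity of Hausdorff content (Lemma~\ref{lemma:limit}(i)) yields $\hdim\tilde K\ge s$ in one step, with only one zoom per value of $\delta$ rather than an infinite nested hierarchy. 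To complete your route you must either prove your uniform-branching lemma in full or quote Furstenberg's dimension conservation theorem precisely and verify that it applies; as written the proof is not complete.
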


% \begin{rem}
% The set $K$ will be a limit of scaled copies of small regions of $A$. 
% And $K$ can have the same Hausdorff dimension as the Assouad dimension of $A$.
% \end{rem}

\begin{proof}[Proof of Theorem~\ref{t1}]
We will need to use a slightly different version of 
the Hausdorff content $\Hd^s_\infty(B)$ in this proof. 
Instead of covering $B \subset \R^d$ with arbitrary sets, 
we will only consider coverings with homothetic copies of 
the unit cube $[0,1]^d$. (From now on, 
a cube is always assumed to be a homothetic copy of the unit cube.) 
For a cube $C$, $\diam(C)$ is just the constant multiple of 
the edge length of $C$ (denoted here by $|C|$). 
For the sake of simplicity, we will use $|C|$ in our definition:  
for any $B \subset \R^d$ and $s>0$ let 
$$ \Hh^s_\infty(B) \mathdef \inf \left\{ \sum_{i=1}^\infty |C_i|^s : C_i 
\mbox{ is a cube for each } i; \ B \subset \bigcup_{i=1}^\infty C_i \right\} .$$
It is easy to see that 
$d^{-s/2} \Hd^s_\infty \leq \Hh^s_\infty \leq \Hd^s_\infty$. 
Also note that $ \Hh^s_\infty([0,1]^d) = 1$ for any $0 < s \leq d$. 

We may assume that $A\subset [0,1]^d$. 
For a positive integer $n$ we divide the unit cube into 
$n^d$ subcubes of edge length $1/n$. 
Let $A_n$ be the union of the subcubes that intersect $A$. 

%It would be great if $A_n$ had large $(s-2\delta)$-dimensional pre-measure,
%say, at least $1/2$. Obviously, this is not true.
%Instead, we show that there exists a not-too-small cube $C$ in $[0,1]^d$
%such that $A_n\cap C$ has large pre-measure relative to the size of $C$.
%A usual covering argument would yield a cube $C$ easily,
%but we have to make sure that $C$ has edge length a bit larger than $1/n$.

We claim that for any fixed $0< \delta < s/2$, 
for infinitely many $n$ (depending on $\delta$) 
there exists a cube $C$ such that 
\begin{equation}\label{theclaim}
 |C| \geq n^{\frac{\delta}{2d}} / n  \mbox{ and } 
\Hh^{s-2\delta}_\infty(C \cap A_n) \geq 2^{-s-2} |C|^{s-2\delta} .
\end{equation}

First we show how the theorem follows from this claim. If \eqref{theclaim} holds for $n$ and $C$, then let $K_n$ be the image of $C \cap A_n$ under the homothety 
that maps $C$ to $[0,1]^d$. 
Hence $\Hh^{s-2\delta}_\infty(K_n)\ge 2^{-s-2}$. 
If $S\subset K_n$ is finite, then there exists $S'$ 
such that the Hausdorff distance of $S$ and $S'$ is 
at most $\sqrt{d} n^{-\delta/(2d)}$ and 
a homothetic image of $S'$ is in $A$. 

For each $\delta=1/l$ choose $n=n_l\ge l^l$ such that the claim holds.
Let $\tilde K$ be the limit of a convergent subsequence of $K_{n_l}$. 
By Lemma \ref{lemma:limit} the Hausdorff dimension of $\tilde K$ is at least 
$s$.
Let $K$ be a compact subset of $\tilde K$ of Hausdorff dimension $s$.
It is easy to check 
that $K$ satisfies all the required properties.

It remains to prove the claim. 
Since $\mdim(A) = s$, $A_n$ contains at least 
$n^{s-\delta}$ subcubes for infinitely many $n$. 
Fix such an $n$ with $n\ge 2^{4/\delta}$. Let 
%$$ c=\min \left\{\frac{1}{m} \Hh^{s-2\delta}_\infty(B): 
$$ c=\min \left\{\Hh^{s-2\delta}_\infty(B) / m: 
B \mbox{ is the union of $m$ subcubes of } A_n, \ m\ge 1\right\} .$$
Since the unit cube covers $A_n$, by choosing $B$ as the union of 
$m\ge n^{s-\de}$ subcubes of $A_n$ we get 
$c\le  \Hh^{s-2\delta}_\infty(B)/m \le 1/n^{s-\delta}$.
%we have $\Hh^{s-2\delta}_\infty(B)\le 1$
%in the above definition of $c$
%we have that $c\le 1/n^{s-\delta}$.
(On the other hand, one subcube has content $1/n^{s-2\delta}$, hence 
the minimum is taken for a set $B$ for which $m$ is at least $n^\delta$.)

Suppose now that $B$ is a set for which the minimum is taken; that is,
$$\Hh^{s-2\delta}_\infty(B)=cm,$$
where $B$ consists of $m$ subcubes of $A_n$.
It follows that there exists a covering of $B$ with cubes $C_i$ ($i=1,2,\ldots$) such that 
$$\sum_{i=1}^\infty |C_i|^{s-2\delta} \le 2cm.$$

%Let $1\ll k \ll n$ be fixed; we will specify $k$ later. 
Let $k=n^{\delta/(2d)}$.
We say that a cube $C_i$ is ``bad'' if $|C_i|<k/n$, and ``good'' otherwise. 
The total volume of the bad cubes is at most
\begin{align*}
& \sum_{C_i \text{ is bad}} |C_i|^d = %\le 
\sum_{C_i \text{ is bad}} |C_i|^{d-s+2\delta} |C_i|^{s-2\delta} \le 
(k/n)^{d-s+2\delta} \sum_{i=1}^\infty |C_i|^{s-2\delta} \\
& \le 2cm (k/n)^{d-s+2\delta} \le 2m k^{d-s+2\delta} n^{-\delta-d}
\le 2m k^{d} n^{-\delta-d} = 2m n^{-\frac{\delta}{2}-d}\le \frac{m}2 n^{-d},
\end{align*}
where the last four estimates follow from $c \le 1/n^{s-\delta}$,
$\de<s/2$, $k=n^{\delta/(2d)}$ and  $n\ge 2^{4/\delta}$.
%Therefore the number of subcubes that are fully covered by bad cubes is 
%at most $m/2$. 
%Setting $$k=n^{\delta/(2d)},$$ 
%this number is at most $m n^{-\delta/2} \le m/2$ if $n$ is large enough.
So there are at most $m/2$ subcubes that are fully covered by bad cubes. 
Let $B'$ be the union of the remaining (at least $m/2$) subcubes in $B$. 
Since each subcube in $B'$ must intersect a good cube $C_i$, 
it follows that the cubes $2C_i$ cover $B'$, where 
$2C_i$ is the cube with the same center as $C_i$ and double edge length. 

Then the definition of $c$ implies that
$$ %\sum_{\text{good cubes}} |2C_i|^{s-2\delta} \ge 
\sum_{C_i \text{ is good}} \Hh^{s-2\delta}_\infty(2C_i \cap A_n) \ge 
\Hh^{s-2\delta}_\infty(B')\ge c\frac{m}2 .$$ 
On the other hand, we have
$$ \sum_{C_i \text{ is good}} |2C_i|^{s-2\delta} \le 
2^{s-2\delta} \sum_{i=1}^\infty |C_i|^{s-2\delta} \le 2^{s-2\delta} 2cm 
\le 2^{s+1}cm.$$
Therefore there exists a good cube $C_i$ such that  
%($C_i$ has edge length larger than $n^{\delta/(2d)}/n$)
$$ \Hh^{s-2\delta}_\infty(2C_i \cap A_n) \ge 
%2^{-s+2\delta-2} |2C_i|^{s-2\delta} \ge 
2^{-s-2} |2C_i|^{s-2\delta} .$$ 
Thus (\ref{theclaim}) holds for the cube $C = 2 C_i$,
which completes the proof. 
\end{proof}

\end{document}